\documentclass[11pt]{amsart}

\usepackage[T1]{fontenc}
\usepackage{lmodern}
\usepackage[utf8]{inputenc}
\usepackage{amsmath,amssymb,amsthm,mathtools}
\usepackage[margin=1.1in]{geometry}
\usepackage{enumitem}
\usepackage[hidelinks,
  pdftitle={Carrier ideals, tail obstructions, and remainder traces for ladder-system spaces},
  pdfauthor={Xing-Yu Hu},
  pdfsubject={Ladder-system spaces, carrier ideals, tail obstructions, and remainder traces},
  pdfkeywords={Delta-space, ladder-system space, countable metacompactness, carrier ideal, tail obstruction, nonstationary ideal, Stone-Cech remainder trace}
]{hyperref}

\newtheorem{theorem}{Theorem}[section]
\newtheorem{lemma}[theorem]{Lemma}
\newtheorem{proposition}[theorem]{Proposition}
\newtheorem{corollary}[theorem]{Corollary}
\newtheorem{problem}[theorem]{Problem}
\theoremstyle{definition}
\newtheorem{definition}[theorem]{Definition}

\newcommand{\omegaone}{\omega_1}
\newcommand{\cf}{\operatorname{cf}}
\newcommand{\cl}{\operatorname{cl}}
\newcommand{\supp}{\operatorname{supp}}
\newcommand{\acc}{\operatorname{acc}}
\newcommand{\Tr}{\operatorname{Tr}}
\newcommand{\cC}{\mathcal{C}}
\newcommand{\Jfin}{\mathcal{J}_{\rm fin}}
\newcommand{\GL}{G_L}
\DeclareRobustCommand{\affiliationline}{\normalfont\small \textsuperscript{a}\,School of Mathematics and Statistics, Hanjiang Normal University}
\DeclareRobustCommand{\addressline}{\normalfont\small No.~18 Beijing South Road, Shiyan 442000, Hubei Province, P.~R. China}
\DeclareRobustCommand{\correspondingline}{\normalfont\small \textsuperscript{*}Corresponding author: \texttt{huxingyu@hjnu.edu.cn}}

\title[Carrier ideals, tail obstructions, and traces]{Carrier ideals, tail obstructions, and remainder traces for ladder-system spaces}
\author[Xing-Yu Hu]{Xing-Yu Hu\textsuperscript{\lowercase{A},*}\\[4pt]
\affiliationline\\
\addressline\\
\correspondingline}

\subjclass[2020]{Primary 03E05, 54D20; Secondary 03E35, 54A35, 54D35, 54D40, 54G20}
\keywords{$\Delta$-space, ladder-system space, countable metacompactness, carrier ideal, tail obstruction, nonstationary ideal, Stone--\v{C}ech remainder trace}

\begin{document}

\begin{abstract}
For a ladder-system space $X_L$ with carrier
$S\subseteq E^{\omegaone}_\omega$, the finite-label uniformization property
$M_{<\omega}$ characterizes countable metacompactness, and countable
metacompactness is equivalent to the $\Delta$-property.  Both equivalences are known for stationary carriers.  For arbitrary carriers,
an active-tail formulation gives a direct proof that $M_{<\omega}$ is
equivalent to the $\Delta$-property and leads to a support-finite decomposition
theorem, together with club-smallness and trace criteria that avoid explicit
ladder-position thresholds.

A club-gap argument, combined with Fodor's lemma, shows that finite and
countable tail multiplicity determine the same carrier ideal, namely
$\mathrm{NS}\restriction S$.  Subsets of the isolated part that meet each ladder in only finitely many points have
clopen remainder traces, and these traces form a generalized Boolean algebra.
All such traces are disjoint from the carrier part of the remainder.  Finally, the subcarriers whose
restricted spaces are $\sigma$-closed discrete form an ideal $\cC_L$ containing
$\mathrm{NS}\restriction S$.  If $X_L$ is a $\Delta$-space, a threshold-based
gluing argument shows that $\cC_L$ is a $\sigma$-ideal.  Whether this holds for
every ladder system remains open.
\end{abstract}

\maketitle

\section{Introduction}\label{sec:intro}

A \emph{ladder system} on a set $S$ of countable limit ordinals assigns to each
$\alpha\in S$ a strictly increasing sequence
$L_\alpha=\{\lambda_\alpha(n):n<\omega\}$ that is cofinal in $\alpha$ and
disjoint from $S$.  The associated ladder-system space $X_L$ has underlying
set $\omegaone$.  Every point of $D=\omegaone\setminus S$ is isolated, while a
basic neighborhood of $\alpha\in S$ consists of $\alpha$ together with a
cofinite tail of $L_\alpha$.  Any two distinct ordinal ladders meet in only finitely many points, so
$\{L_\alpha:\alpha\in S\}$ is an almost disjoint family on $D$.  Thus $X_L$ is
a locally compact, zero-dimensional generalized Isbell--Mr\'owka
$\Psi$-space of the type studied in \cite{Mrowka,Hrusak,HHH}.

A Tychonoff space $X$ is a $\Delta$-space if every decreasing sequence
$(A_n)_{n<\omega}$ of subsets with empty intersection can be expanded to a
decreasing sequence $(U_n)_{n<\omega}$ of open sets with
$A_n\subseteq U_n$ for all $n$ and $\bigcap_n U_n=\emptyset$.  K\k{a}kol and Leiderman proved that a Tychonoff space has this property
precisely when $C_p(X)$ is distinguished \cite[Theorem~2.1]{KLchar}.  Arkhangel'skii's
monograph \cite{Arhangelskii} is a standard source for the $C_p$-space
background.  For spaces of the form
$\Psi(D,\mathcal A)$, Leiderman and Szeptycki proved that the $\Delta$-property
is equivalent to countable metacompactness \cite[Prop.~4.1]{LS}.  Applying this theorem to the almost disjoint family of ordinal ladders shows
that the associated ladder-system space is a $\Delta$-space exactly when it is
countably metacompact.  Balogh, Eisworth, Gruenhage, Pavlov, and Szeptycki characterized countable metacompactness of $X_L$ by the
weak uniformization property $M_{<\omega}$ in the stationary-carrier setting
\cite[Claim~1]{BEGPS}.  Carvalho, Inamdar, and Rinot state the corresponding characterization for
ladder systems over stationary subsets of regular uncountable cardinals in
\cite[Fact~4.2]{CIR}, based on \cite[Prop.~4.1]{LS} and a direct
generalization of \cite[Claim~1]{BEGPS}.
Proposition~\ref{prop:active-normal-form} and
Lemma~\ref{lem:Mlessomega-equiv} give, in the present notation, an independent
proof of the equivalence
\[
        X_L\in\Delta \quad\Longleftrightarrow\quad L\text{ satisfies }M_{<\omega}.
\]
Recent work of Carvalho, Inamdar, and Rinot shows that countable
metacompactness of ladder-system spaces exhibits genuinely set-theoretic
behavior at higher cardinals \cite{CIR}.  For the classical $\Delta$-set terminology, see
Knight \cite{Knight}.  Memarpanahi and Szeptycki study the existence of
Lindel\"of $Q$-set spaces and Lindel\"of $\Delta$-set spaces.  They prove
that Moore's ZFC $L$-space is not a $Q$-set space and that, if all Aronszajn
trees are special, it is not a $\Delta$-set space \cite{MSQL}.  Further
structural results on
$\Delta$-spaces, including compactness and cardinality theorems, appear in
\cite{JvMSS}.

For general topological terminology, see Engelking \cite{Engelking}.  Club and
stationary subsets of $\omegaone$ are understood in the standard sense, and
Fodor's pressing-down lemma is used in its usual form.  For set-theoretic
notation and background in set-theoretic topology, see Jech \cite{Jech}, Kunen
\cite{Kunen}, and Kunen--Vaughan \cite{KunenVaughan}.

For arbitrary carriers, Lemma~\ref{lem:Mlessomega-equiv} recasts
$M_{<\omega}$ as an active-tail diagonalization property for decreasing sequences
of active carrier sets, replacing finite label sets by integer position thresholds.
This position formulation underlies the decomposition theorem, the hierarchy
of tail conditions, and the comparison of coarse support-fiber and club-block
hypotheses with explicit position bounds.

Support fibers encode the incidence relation between isolated points and
ladders.  Their cardinalities and distribution across club blocks lead to the
support-finite decomposition theorem and to the club-smallness and trace criteria.
The trace criterion, together with Fodor's lemma, shows that club-smallness
is equivalent to nonstationarity of the trace-accumulation set and to
countability of every support fiber.  Since subsets of $\omegaone$ are
countable exactly when they are bounded, club-smallness is also equivalent to
boundedness of every support fiber.  These incidence conditions are coarser than the explicit ladder-position
thresholds in $M_{<\omega}$ and in the active normal form.  The club-small decomposition ideal
$\mathcal J_{\rm cb}$ is precisely the $\sigma$-ideal of carriers with
countable support fibers.  A ZFC example, obtained by adding a common first
point to each ladder in the construction of \cite[Example~5.2]{LS}, produces a
$\sigma$-closed-discrete $\Delta$-space in which one isolated point lies on
every ladder.  Hence club-smallness, membership in
$\mathcal J_{\rm cb}$, countability of all support fibers, and
nonstationarity of the trace-accumulation set are not necessary for the
$\Delta$-property.  The sharper position-sensitive criteria therefore use explicit threshold
inequalities.

For $A\subseteq S$, let $A\in\Jfin$ mean that finite initial segments
can be deleted so that every isolated point lies on only finitely many remaining
tails indexed by $A$.  Let $A\in\mathcal J^\ast$ mean that finite initial
segments can be deleted so that every isolated point lies on at most countably
many remaining tails indexed by $A$.
The club-gap argument on a nonstationary carrier and Fodor's lemma on a
stationary carrier give
\[
        \Jfin=\mathcal J^\ast=\mathrm{NS}\restriction S.
\]
Finite and countable tail multiplicity therefore coincide at the carrier
level.  Both hold exactly for the nonstationary subcarriers.

The ideal $\mathcal I_L$ of subsets of the isolated part that meet each
ladder in only finitely many points induces a generalized Boolean algebra of
clopen traces in
the Stone--\v{C}ech remainder.  The compactification arguments use only standard facts about the Stone--\v{C}ech compactification
from \cite{GJ,Walker}.  These traces are disjoint from the $S$-part of the
remainder, and deleting any member of $\mathcal I_L$ from $X_L$ preserves the
$\Delta$-property.  No remainder characterization of the $\Delta$-property is
claimed.

Finally, let $\cC_L$ be the family of subcarriers $A\subseteq S$ for which
$X_A$ is $\sigma$-closed discrete.  It is an ideal containing every
nonstationary subcarrier.  On stationary carriers, the universal threshold
scheme is the strict-threshold form of \cite[Fact~4.2(3)]{CIR}.  For arbitrary
carriers, the equivalence between this scheme and $X_L\in\Delta$ is proved
directly.  Applying a
single threshold instance to countably many local colorings gives
\[
        X_L\in\Delta\quad\Longrightarrow\quad
        \cC_L\text{ is a }\sigma\text{-ideal}.
\]
Problem~\ref{prob:cC-zfc} asks whether this conclusion holds in ZFC without
the hypothesis $X_L\in\Delta$.

Throughout, $S\subseteq E^{\omegaone}_\omega=\{\alpha<\omegaone:\cf(\alpha)=\omega\}$
and $D=\omegaone\setminus S$.  When stationarity of the carrier is used, it is
stated explicitly.  For $\xi\in D$, write
$\supp_L(\xi)=\{\alpha\in S:\xi\in L_\alpha\}$ and let $p_\alpha(\xi)$ denote the
position of $\xi$ in $L_\alpha$.

\section{Preliminaries}\label{sec:preliminaries}

Under their standing convention that spaces are infinite and Tychonoff,
Leiderman and Szeptycki define $\Delta$-spaces by the condition in
\cite[Definition~1.2]{LS}.  For that class of spaces, their definition differs from the one below only notationally.  The definition below is stated for arbitrary topological spaces.

\begin{definition}
A topological space $X$ is a $\Delta$-space if for every decreasing sequence
\[
        A_0\supseteq A_1\supseteq A_2\supseteq\cdots
\]
of subsets of $X$ with $\bigcap_{n<\omega}A_n=\emptyset$, there is a decreasing sequence
\[
        U_0\supseteq U_1\supseteq U_2\supseteq\cdots
\]
of open subsets of $X$ such that $A_n\subseteq U_n$ for every $n<\omega$ and
\[
        \bigcap_{n<\omega}U_n=\emptyset.
\]
\end{definition}

\begin{lemma}\label{lem:decreasing}
In the definition of a $\Delta$-space, it is enough to find open sets $V_n$, not necessarily decreasing, such that $A_n\subseteq V_n$ for each $n<\omega$ and $\bigcap_{n<\omega}V_n=\emptyset$.  These sets may then be replaced by a decreasing sequence of open sets.
\end{lemma}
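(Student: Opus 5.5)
The plan is to replace the given open sets by their finite initial intersections. Suppose $(A_n)_{n<\omega}$ is decreasing with $\bigcap_n A_n=\emptyset$, and that we have open sets $V_n$ with $A_n\subseteq V_n$ and $\bigcap_n V_n=\emptyset$. For each $n$ we put
\[
        U_n=\bigcap_{k\le n}V_k .
\]
We then check, one after another, the three conditions of the definition of a $\Delta$-space.

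First, each $U_n$ is a finite intersection of open sets, so it is open; this holds in any topological space. Second, $U_{n+1}=U_n\cap V_{n+1}\subseteq U_n$, so the sequence is decreasing. Third, $A_n\subseteq U_n$: for every $k\le n$ we have $A_n\subseteq A_k\subseteq V_k$, since the $A$'s decrease. This is the only step that uses the monotonicity of $(A_n)$, and it is exactly what makes the reduction work.

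Finally, $\bigcap_n U_n=\bigcap_n V_n=\emptyset$. Indeed, $U_n\subseteq V_n$ for every $n$, and also $\bigcap_n U_n\supseteq\bigcap_n V_n$ trivially. So $(U_n)$ witnesses the definition for $(A_n)$. The converse direction is immediate, because a decreasing sequence is in particular a sequence.

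There is no real obstacle here. The only point needing care is the inclusion $A_n\subseteq V_k$ for $k\le n$, and it follows directly from $A_n\subseteq A_k$.
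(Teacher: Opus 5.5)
Your proof is correct and follows the paper's argument: you take $U_n=\bigcap_{k\le n}V_k$, use the monotonicity of $(A_n)$ to get $A_n\subseteq U_n$, and note that $\bigcap_n U_n\subseteq\bigcap_n V_n=\emptyset$. The equality $\bigcap_n U_n=\bigcap_n V_n$ that you also observe is true, but only this inclusion is needed.
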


\begin{proof}
Set $U_n=\bigcap_{i\le n}V_i$.  Since $(A_n)$ is decreasing, $A_n\subseteq U_n$ for every $n$.  Moreover, $\bigcap_n U_n\subseteq\bigcap_n V_n=\emptyset$, so $\bigcap_n U_n=\emptyset$.
\end{proof}

Subspaces and topological sums of $\Delta$-spaces are again
$\Delta$-spaces.  In particular, every
discrete space is a $\Delta$-space.  Every countable $T_1$ space is also a
$\Delta$-space.

If $Y\subseteq X$ is a subspace, intersecting each witnessing open expansion in $X$ with $Y$ gives the required expansion in $Y$.  For
a topological sum, the defining expansion is applied component by component.  For the countable $T_1$ case, enumerate
$X=\{x_k:k<\omega\}$.  Given decreasing $(A_n)$ with empty intersection,
choose $n(k)$ with $x_k\notin A_{n(k)}$ and set
$U_n=X\setminus\{x_k:k\leq n\text{ and }n(k)\leq n\}$.  The removed set
is finite, so $U_n$ is open.  The sets $U_n$ decrease, contain $A_n$, and have
empty intersection.

\section{Main Results}\label{sec:main-results}

\subsection{Ladder-system spaces}\label{sec:spaces}

Unless stated otherwise, fix $S\subseteq E^{\omegaone}_\omega$, where
\[
        E^{\omegaone}_\omega=\{\alpha<\omegaone:\cf(\alpha)=\omega\}.
\]
Put $D=\omegaone\setminus S$.

Here a ladder system on $S$ means a family
\[
        L=\{L_\alpha:\alpha\in S\}
\]
such that each $L_\alpha=\{\lambda_\alpha(n):n<\omega\}$ is strictly increasing and cofinal in $\alpha$, and
\[
        L_\alpha\subseteq D
        \quad\text{for every }\alpha\in S.
\]
This requirement ensures that every ladder point is isolated in the associated
space.  Without it, ladder points may be non-isolated and $X_L$ need not be
locally compact.

For ladder systems on stationary carriers with $L_\alpha\subseteq D$ for every
$\alpha\in S$, the reduced presentation used here is homeomorphic to the
subspace of the standard doubled presentation in \cite[p.~130]{LS} obtained by deleting the isolated set
$S\times\{0\}$.  The homeomorphism sends $(\xi,0)$ to $\xi$ for
$\xi\in D$ and $(\alpha,1)$ to $\alpha$ for $\alpha\in S$.  The same reduced presentation is used for arbitrary carriers.

\begin{definition}
The ladder-system space $X_L$ has underlying set $\omegaone$.  Each point of $D$ is isolated.  For $\alpha\in S$ and $m<\omega$, put
\[
        N(\alpha,m)=\{\alpha\}\cup\{\lambda_\alpha(n):m\leq n<\omega\}.
\]
The sets $N(\alpha,m)$ form a local base at $\alpha$.
\end{definition}

For $\xi\in D$, define the ladder support of $\xi$ by
\[
        \supp_L(\xi)=\{\alpha\in S:\xi\in L_\alpha\}.
\]
If $\xi\in L_\alpha$, let $p_\alpha(\xi)$ be the unique integer $n$ such that $\xi=\lambda_\alpha(n)$.

If $\alpha<\beta$ are in $S$, then $L_\alpha\cap L_\beta$ is finite, since
$L_\beta\cap\alpha$ is finite.  Hence $X_L$ is a $\Psi(D,\mathcal A)$-space for
the almost disjoint family $\mathcal A=\{L_\alpha:\alpha\in S\}$, where
$L_\alpha$ corresponds to the carrier point $\alpha$.  The $\Psi$-space criterion of Leiderman and Szeptycki applies here \cite[Prop.~4.1]{LS}.
Compared with the doubled ladder-space presentation in
\cite[p.~189]{BEGPS}, this reduced presentation omits the separate isolated set indexed by $S$.  The omitted part is clopen and discrete, so this
convention does not change the $\Delta$-property or countable
metacompactness.

For $A\subseteq S$, write $L\restriction A=\langle L_\alpha:\alpha\in A\rangle$ for
the \emph{restricted ladder system} on the carrier $A$, and
\[
        X_A=A\cup\bigcup_{\alpha\in A}L_\alpha
\]
for its restricted ladder-system space, regarded as a subspace of $X_L$.  Its
isolated part is
\[
        D_A=\bigcup_{\alpha\in A}L_\alpha.
\]
The members of $A$ are the non-isolated points of $X_A$.  For each $\alpha\in A$, the sets
$N(\alpha,m)$, $m<\omega$, form a local base at $\alpha$.  By convention, $X_{L\restriction A}$ denotes the restricted space just defined, so
$X_A=X_{L\restriction A}$.  In particular, symbols such as $X_P$ and
$X_{c^{-1}\{i\}}$ always denote generated restricted spaces of this form.
For $\xi\in D_A$, the support in the restricted system is
\[
        \supp_{L\restriction A}(\xi)=A\cap\supp_L(\xi).
\]
The position $p_\alpha(\xi)$ is unchanged under restriction.  Lemma~\ref{lem:sigma-cd-coloring} and
Propositions~\ref{prop:tail-equivalence} and~\ref{prop:active-normal-form}
remain valid after replacing $S,D,L$ by $A,D_A,L\restriction A$.  Subsequent
applications to a subcarrier use these restricted forms.  When convenient, a
function on $D_A$ may be extended arbitrarily to $D$.

The space $X_L$ is Hausdorff, zero-dimensional, and locally compact.  For every
$\alpha\in S$ and $m<\omega$, the set $N(\alpha,m)$ is compact and open.  Indeed,
points of $D$ are isolated, and distinct ladders have finite intersection, so
distinct points can be separated by open sets.  Each set $N(\alpha,m)$ consists of a convergent sequence together with
its limit and is therefore compact.  It is open by definition.  Its
complement is open because every other ladder has only finite intersection with
the tail of $L_\alpha$.  Thus the local base at each non-isolated point consists of compact open sets.

In the terminology of Leiderman and Szeptycki, an infinite Tychonoff space is
$\sigma$-closed discrete when it is a countable union of closed discrete
subspaces.  For $X_L$, this terminology specializes as follows.

\begin{definition}[{\cite[p.~119]{LS}}]\label{def:sigma-cd}
$X_L$ is \emph{$\sigma$-closed discrete} if its underlying set is a countable union of closed discrete subspaces.
\end{definition}
When this holds, the ladder system $L$ is also called $\sigma$-closed discrete.

\begin{lemma}\label{lem:sigma-cd-coloring}
$X_L$ is $\sigma$-closed discrete if and only if there is $c:D\to\omega$ such that $c\restriction L_\alpha$ is finite-to-one for every $\alpha\in S$.
\end{lemma}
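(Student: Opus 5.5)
The plan is to prove both directions directly from the structure of closed discrete subsets of $X_L$. The first observation is that a subset $F\subseteq X_L$ is closed discrete if and only if $F\cap L_\alpha$ is finite for every $\alpha\in S$. For the forward direction, suppose $F\cap L_\alpha$ were infinite for some $\alpha$. Every neighborhood $N(\alpha,m)$ would then meet $F\setminus\{\alpha\}$, so $\alpha$ would be an accumulation point of $F$, which is impossible for a closed discrete set. For the converse, points of $D$ are isolated and therefore never accumulation points. A point $\alpha\in S$ has the neighborhood $N(\alpha,m)$, where $m$ exceeds every position of the finitely many points of $F\cap L_\alpha$; this neighborhood meets $F$ in at most $\{\alpha\}$. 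Hence $F$ has no accumulation points, so it is closed and discrete.

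For the direction from a coloring to $\sigma$-closed discreteness, take $c$ with each $c\restriction L_\alpha$ finite-to-one. Put $F_n=c^{-1}\{n\}\subseteq D$ for $n<\omega$. Each $F_n\cap L_\alpha$ is finite, so $F_n$ is closed discrete by the observation above. The set $S$ is also closed discrete, since $S\cap L_\alpha=\emptyset$ because $L_\alpha\subseteq D$. Therefore $\omegaone=S\cup\bigcup_n F_n$ is a countable union of closed discrete sets.

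For the converse, write $\omegaone=\bigcup_n F_n$ with each $F_n$ closed discrete. Define $c(\xi)=\min\{n:\xi\in F_n\}$ for $\xi\in D$. Then $c^{-1}\{n\}\cap L_\alpha\subseteq F_n\cap L_\alpha$, and the latter is finite by the observation. So $c\restriction L_\alpha$ is finite-to-one.

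There is no substantial obstacle here. The only point needing care is the characterization of closed discrete sets, specifically the use of the local base $N(\alpha,m)$ and the fact that $L_\alpha\subseteq D$, which keeps $S$ itself closed discrete.
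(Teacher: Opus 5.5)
Your proof is correct and follows essentially the same route as the paper: a closed discrete set meets each ladder in a finite set, a set meeting every ladder finitely is closed discrete, $S$ is closed discrete because the ladders lie in $D$, and the coloring is $c(\xi)=\min\{n:\xi\in F_n\}$, while the fibers $c^{-1}\{n\}$ give the cover in the other direction. Your explicit choice of $m$ beyond the positions of the finitely many points of $F\cap L_\alpha$ proves closedness of the pieces more carefully than the paper, whose brief remark at that step only restates the contrapositive of the other implication.
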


\begin{proof}
The set $S$ of non-isolated points is closed discrete, since
$N(\alpha,m)\cap S=\{\alpha\}$.  Given a $\sigma$-closed-discrete cover of
$X_L$, write
$X_L=\bigcup_n E_n$, with each $E_n$ closed discrete in $X_L$, and put
$A_n=E_n\cap D$.  If $A_n\cap L_\alpha$ were infinite, then $\alpha\in\cl_{X_L}E_n$.
Closedness would give $\alpha\in E_n$, but the infinitude of
$A_n\cap L_\alpha$ would contradict discreteness of $E_n$ at $\alpha$.  Hence
$A_n\cap L_\alpha$ is finite for every $\alpha$.  Thus $D$ is covered by
countably many sets, each of which meets every ladder in a finite set.

Conversely, if $D=\bigcup_n A_n$ and each $A_n$ meets every ladder in finitely many points, then
each $A_n$ is closed and discrete in $X_L$.  Indeed, if
$A_n\cap L_\alpha$ were infinite, then $\alpha$ would lie in the closure of
$A_n$.  Together with the closed discrete set $S$, the sets $A_n$ form a $\sigma$-closed-discrete cover of $X_L$.
Given such a decomposition, define $c(\xi)=\min\{n:\xi\in A_n\}$.  Then
$c^{-1}(n)\subseteq A_n$, so $c\restriction L_\alpha$ is finite-to-one.
If, on the other hand, $c\restriction L_\alpha$ is finite-to-one for all
$\alpha$, then $A_n=c^{-1}(n)$ meets each $L_\alpha$ in finitely many points and
$D=\bigcup_n A_n$.
\end{proof}

\subsection{Tail equivalence and normalization}

Deleting finitely many initial points from each ladder leaves the topology
unchanged.

\begin{definition}
Let $L$ and $M$ be ladder systems on the same set $S\subseteq E^{\omegaone}_\omega$, with the same isolated part $D=\omegaone\setminus S$.  The systems $L$ and $M$ are tail equivalent if
\[
        L_\alpha\triangle M_\alpha
\]
is finite for every $\alpha\in S$.
\end{definition}

\begin{proposition}\label{prop:tail-equivalence-homeomorphism}
If $L$ and $M$ are tail-equivalent ladder systems on the same $S$, then the identity map on $\omegaone$ is a homeomorphism
\[
        X_L\longrightarrow X_M.
\]
In particular,
\[
        X_L\in\Delta \quad\Longleftrightarrow\quad X_M\in\Delta.
\]
\end{proposition}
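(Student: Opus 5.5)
The plan is to show directly that the two topologies on $\omegaone$ coincide, by comparing local bases point by point. Both spaces have the same underlying set and the same set $D=\omegaone\setminus S$ of isolated points, so at points of $D$ there is nothing to check. The whole argument therefore reduces to showing that, at each $\alpha\in S$, every basic neighborhood $N_L(\alpha,m)$ contains some $N_M(\alpha,k)$, and conversely. Once this holds at every point, the identity map is continuous in both directions and hence a homeomorphism. The $\Delta$-equivalence is then immediate, because the $\Delta$-property is defined purely in terms of subsets and open sets and is therefore a topological invariant.

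The key step is a tail comparison. Fix $\alpha\in S$. Since $L_\alpha\triangle M_\alpha$ is finite and both ladders are cofinal in $\alpha$, there is an ordinal $\gamma<\alpha$ with
\[
        L_\alpha\cap(\gamma,\alpha)=M_\alpha\cap(\gamma,\alpha).
\]
To see this, take $\gamma$ to be the maximum of the finite set $L_\alpha\triangle M_\alpha$, or $\gamma=0$ if that set is empty. This is below $\alpha$ because every element of the set lies in $L_\alpha\cup M_\alpha\subseteq\alpha$. Now let $m<\omega$. Since $\lambda_\alpha$ is strictly increasing, the set $\{\lambda_\alpha(n):n\ge m\}$ equals $L_\alpha\cap[\lambda_\alpha(m),\alpha)$. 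Choose $k$ so large that $\mu_\alpha(k)>\max(\gamma,\lambda_\alpha(m))$, where $\mu_\alpha$ enumerates $M_\alpha$ increasingly. This is possible because $M_\alpha$ is cofinal in $\alpha$. With this choice, every $\mu_\alpha(n)$ with $n\ge k$ lies in $M_\alpha\cap(\gamma,\alpha)=L_\alpha\cap(\gamma,\alpha)$ and exceeds $\lambda_\alpha(m)$. It is therefore of the form $\lambda_\alpha(j)$ for some $j\ge m$, which gives $N_M(\alpha,k)\subseteq N_L(\alpha,m)$. The reverse inclusion follows by the symmetric argument, exchanging the roles of $L$ and $M$.

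The one point needing care, and the closest thing to an obstacle, is that the sets $N(\alpha,m)$ really are local bases in the sense that a set $U$ is open in $X_L$ exactly when every $\alpha\in U\cap S$ has some $N_L(\alpha,m)\subseteq U$. This is how the topology is specified in the definition of $X_L$, so the mutual refinement of local bases at every point gives equality of the two topologies. I expect no genuine difficulty beyond keeping the ordinal and position indices straight in the tail comparison above.
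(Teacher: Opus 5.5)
Your proof is correct and takes essentially the same route as the paper. Isolated points need no checking, and at each $\alpha\in S$ the finiteness of $L_\alpha\triangle M_\alpha$ nests an $M$-tail inside any given $L$-tail and conversely, after which the $\Delta$-equivalence follows because the $\Delta$-property is topological. The only difference is cosmetic: you find $k$ via an ordinal cutoff $\gamma$ above which the two ladders agree, whereas the paper notes directly that $M_\alpha\setminus\{\lambda_\alpha(n):m\le n<\omega\}$ is finite.
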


\begin{proof}
All points of $D$ are isolated in both spaces, so it remains only to compare neighborhoods of points of $S$.  Fix $\alpha\in S$.  Write
\[
        L_\alpha=\{\lambda_\alpha(n):n<\omega\},
        \qquad
        M_\alpha=\{\mu_\alpha(n):n<\omega\}.
\]
Both enumerations are increasing.

Let $m<\omega$.  Since $M_\alpha\setminus L_\alpha$ is finite and
\[
        L_\alpha\setminus\{\lambda_\alpha(n):m\leq n<\omega\}
\]
is finite, there is $k<\omega$ such that
\[
        \{\mu_\alpha(n):k\leq n<\omega\}
        \subseteq
        \{\lambda_\alpha(n):m\leq n<\omega\}.
\]
Thus every $L$-basic neighborhood of $\alpha$ contains an $M$-basic neighborhood.  Interchanging $L$ and $M$ shows that every $M$-basic neighborhood contains an $L$-basic neighborhood.  Hence the two local bases at $\alpha$ generate the same neighborhoods.  Since $\alpha$ was arbitrary, the identity map is a homeomorphism.  Because the $\Delta$-property is topological, $X_L\in\Delta$ if and only if $X_M\in\Delta$.
\end{proof}

\begin{corollary}\label{cor:tail-thinning-same-topology}
Let $h:S\to\omega$.  For each $\alpha\in S$, put
\[
        L^h_\alpha=\{\lambda_\alpha(n):h(\alpha)\leq n<\omega\}.
\]
Then $L^h=\{L^h_\alpha:\alpha\in S\}$ is tail equivalent to $L$, and
\[
        X_{L^h}=X_L
\]
as topological spaces.
\end{corollary}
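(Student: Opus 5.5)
The plan is to check that $L^h$ is a ladder system on $S$ in the sense of the paper, then apply Proposition~\ref{prop:tail-equivalence-homeomorphism} directly.

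First, fix $\alpha\in S$. The set $L^h_\alpha$ is the tail of $L_\alpha$ beginning at position $h(\alpha)$. Its natural enumeration is $\mu_\alpha(n)=\lambda_\alpha(h(\alpha)+n)$, which is strictly increasing. It is cofinal in $\alpha$, since removing finitely many points from a cofinal $\omega$-sequence in a limit ordinal of countable cofinality leaves it cofinal. It is contained in $D$ because $L^h_\alpha\subseteq L_\alpha\subseteq D$. So $L^h$ is a ladder system on $S$ with the same isolated part $D=\omegaone\setminus S$.

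Second, observe that $L_\alpha\triangle L^h_\alpha=L_\alpha\setminus L^h_\alpha=\{\lambda_\alpha(n):n<h(\alpha)\}$. This set has exactly $h(\alpha)$ elements, so it is finite. Hence $L$ and $L^h$ are tail equivalent.

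Finally, Proposition~\ref{prop:tail-equivalence-homeomorphism} shows that the identity on $\omegaone$ is a homeomorphism $X_L\to X_{L^h}$. Both spaces have underlying set $\omegaone$, so the two topologies coincide, which is the meaning of $X_{L^h}=X_L$. One can also see this directly: the basic sets satisfy $N^{L^h}(\alpha,m)=N^{L}(\alpha,h(\alpha)+m)$, so the two local bases at $\alpha$ generate the same neighborhood filter.

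There is no real obstacle here. The only point needing care is confirming the structural conditions (strictly increasing, cofinal, disjoint from $S$), and the hypothesis $L_\alpha\subseteq D$ of the proposition is inherited by $L^h$.
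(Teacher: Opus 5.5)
Your proposal is correct and follows the paper's proof: the symmetric difference $L_\alpha\triangle L^h_\alpha$ consists of the first $h(\alpha)$ points of $L_\alpha$, so Proposition~\ref{prop:tail-equivalence-homeomorphism} applies. Your extra check that $L^h$ is itself a ladder system (strictly increasing, cofinal, contained in $D$), and the direct identity $N^{L^h}(\alpha,m)=N^{L}(\alpha,h(\alpha)+m)$, are harmless additions that the paper leaves implicit.
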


\begin{proof}
For each $\alpha\in S$, the symmetric difference $L_\alpha\triangle L^h_\alpha$ is contained in the first $h(\alpha)$ points of $L_\alpha$.  Proposition~\ref{prop:tail-equivalence-homeomorphism} applies.
\end{proof}

Although the topology is unchanged, the support hypergraph and the associated fiber
conditions may vary with the chosen ladder presentation, since changing finitely many
initial points can alter how many ladders contain a fixed isolated point.
Support-based criteria are therefore stated for a fixed presentation, or for a
specified thinning such as $L^h$.

\subsection{Tail diagonalization and the active normal form}\label{sec:tail-diag}

For $X_L$, the $\Delta$-property can be characterized in terms of tails
chosen from the active ladders.

\begin{definition}
Let $L$ be a ladder system on $S$, and put $D=\omegaone\setminus S$.  An evanescent ladder pair is a pair of decreasing sequences
\[
        B_0\supseteq B_1\supseteq B_2\supseteq\cdots,\qquad
        C_0\supseteq C_1\supseteq C_2\supseteq\cdots.
\]
Here $B_n\subseteq D$ and $C_n\subseteq S$ for every $n$, and
\[
        \bigcap_{n<\omega}(B_n\cup C_n)=\emptyset.
\]
A tail diagonalization for $(B_n,C_n)_{n<\omega}$ is a sequence of functions
\[
        f_n:C_n\longrightarrow\omega
\]
such that for every $\xi\in D$ there is $n<\omega$ satisfying
\[
        \xi\notin B_n
        \quad\text{and}\quad
        p_\alpha(\xi)<f_n(\alpha)
        \text{ whenever }\alpha\in C_n\cap\supp_L(\xi).
\]
\end{definition}

At stage $n$, $B_n$ consists of the isolated points retained at that stage, whereas $f_n$
specifies how far to trim each active ladder.  The diagonalization condition
ensures that every isolated point is absent from $B_n$ and from all selected
ladder tails at some stage.

\begin{proposition}\label{prop:tail-equivalence}
For a ladder system $L$ on $S$, $X_L\in\Delta$ if and only if every evanescent ladder pair admits a tail diagonalization.
\end{proposition}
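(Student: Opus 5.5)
The plan is to prove both implications directly from the definition of the $\Delta$-property. Throughout we use that the sets $N(\alpha,m)$ form a local base at $\alpha\in S$, and that by Lemma~\ref{lem:decreasing} the witnessing open sets need not be decreasing. The dictionary between the two formulations is the following. Given decreasing sets $A_n\subseteq X_L$ with empty intersection, put $B_n=A_n\cap D$ and $C_n=A_n\cap S$. These sequences are decreasing, and $\bigcap_n(B_n\cup C_n)=\bigcap_n A_n=\emptyset$, so $(B_n,C_n)$ is an evanescent ladder pair. Conversely, every evanescent ladder pair arises this way from $A_n=B_n\cup C_n$.

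\emph{Sufficiency.} Assume every evanescent ladder pair admits a tail diagonalization. Given $(A_n)$ and a tail diagonalization $(f_n)$ for the associated pair, set
\[
        V_n=B_n\cup\bigcup_{\alpha\in C_n}N(\alpha,f_n(\alpha)).
\]
This set is open because points of $D$ are isolated, and it contains $A_n$. We show $\bigcap_n V_n=\emptyset$ in two cases.
\begin{enumerate}[label=(\roman*)]
\item A point $\beta\in S$ lies in $V_n$ only if $\beta\in C_n$, since $N(\alpha,m)\cap S=\{\alpha\}$ and $B_n\subseteq D$. As $\bigcap_n C_n=\emptyset$, some $n$ has $\beta\notin V_n$.
\item For $\xi\in D$, take the $n$ given by the diagonalization. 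Then $\xi\notin B_n$. Also, for each $\alpha\in C_n$, either $\xi\notin L_\alpha$ or $p_\alpha(\xi)<f_n(\alpha)$, so $\xi\notin N(\alpha,f_n(\alpha))$. Hence $\xi\notin V_n$.
\end{enumerate}
Lemma~\ref{lem:decreasing} then yields the decreasing expansion.

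\emph{Necessity.} Assume $X_L\in\Delta$ and let $(B_n,C_n)$ be an evanescent ladder pair. Apply the $\Delta$-property to $A_n=B_n\cup C_n$ to get open $U_n\supseteq A_n$ with $\bigcap_n U_n=\emptyset$. For $\alpha\in C_n\subseteq U_n$, openness gives some $m$ with $N(\alpha,m)\subseteq U_n$; let $f_n(\alpha)$ be the least such $m$. Now fix $\xi\in D$ and choose $n$ with $\xi\notin U_n$. Then $\xi\notin B_n$. Moreover, if $\alpha\in C_n\cap\supp_L(\xi)$ and $p_\alpha(\xi)\ge f_n(\alpha)$, then $\xi\in N(\alpha,f_n(\alpha))\subseteq U_n$, a contradiction. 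Hence $p_\alpha(\xi)<f_n(\alpha)$, which is exactly the tail diagonalization condition.

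No step is a real obstacle; the argument is a direct translation. The only points needing care are two bookkeeping checks. First, in the sufficiency direction, the carrier points must be handled separately from the isolated points, using $N(\alpha,m)\cap S=\{\alpha\}$. Second, the $f_n$ from different stages need not be compatible, and this is harmless because Lemma~\ref{lem:decreasing} removes any monotonicity requirement on the $V_n$.
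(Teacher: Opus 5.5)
Your proposal is correct and follows the paper's proof essentially verbatim: in one direction you pull back thresholds $f_n(\alpha)$ from open sets $U_n\supseteq B_n\cup C_n$, and in the other you build $V_n=B_n\cup\bigcup_{\alpha\in C_n}N(\alpha,f_n(\alpha))$. You then handle carrier points and isolated points separately before finishing with Lemma~\ref{lem:decreasing}, exactly as the paper does.
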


\begin{proof}
Assume first that $X_L\in\Delta$.  Let $(B_n,C_n)_{n<\omega}$ be an evanescent ladder pair, and put $A_n=B_n\cup C_n$.  Choose open sets $U_n$ with $A_n\subseteq U_n$ and $\bigcap_n U_n=\emptyset$.  For every $\alpha\in C_n$, choose $f_n(\alpha)<\omega$ such that
\[
        N(\alpha,f_n(\alpha))\subseteq U_n.
\]
Fix $\xi\in D$.  Since $\xi\notin\bigcap_n U_n$, there is $n$ with $\xi\notin U_n$.  Then $\xi\notin B_n$.  If $\alpha\in C_n\cap\supp_L(\xi)$ and $p_\alpha(\xi)\geq f_n(\alpha)$, then $\xi\in N(\alpha,f_n(\alpha))\subseteq U_n$, a contradiction.  Thus $(f_n)$ is a tail diagonalization.

Conversely, suppose that every evanescent ladder pair admits a tail diagonalization.  Let $(A_n)_{n<\omega}$ be a decreasing sequence of subsets of $X_L$ with empty intersection.  Put
\[
        B_n=A_n\cap D,
        \qquad
        C_n=A_n\cap S.
\]
Choose a tail diagonalization $f_n:C_n\to\omega$.  Define
\[
        V_n=B_n\cup C_n\cup
        \bigcup_{\alpha\in C_n}\{\lambda_\alpha(k):f_n(\alpha)\leq k<\omega\}.
\]
The set $V_n$ is open in $X_L$.  If $\alpha\in V_n\cap S$, then
$\alpha\in C_n$ and $V_n$ contains a tail of $L_\alpha$.  By construction,
$A_n\subseteq V_n$.

It remains to show that $\bigcap_n V_n=\emptyset$.  If $\alpha\in S$, then
$\alpha\notin C_n$ for some $n$, because $\bigcap_n A_n=\emptyset$.  Since
all ladder points lie in $D$, it follows that $\alpha\notin V_n$.  If $\xi\in D$, take
$n$ from the tail diagonalization.  Then $\xi\notin B_n$.  Moreover, every
$\alpha\in C_n\cap\supp_L(\xi)$ satisfies $p_\alpha(\xi)<f_n(\alpha)$.
Hence $\xi$ is not contained in any tail inserted at stage $n$, and so
$\xi\notin V_n$.  Thus the intersection of the $V_n$ is empty.
Lemma~\ref{lem:decreasing} completes the proof.
\end{proof}

Proposition~\ref{prop:tail-equivalence} reformulates the
open-set definition in terms of isolated points and active ladder tails.  The subsequent position-based sufficient conditions depend on the supports
$\supp_L(\xi)$ and the positions $p_\alpha(\xi)$.

The next proposition reduces the criterion to decreasing sequences of active carrier sets.

\begin{definition}
An active ladder sequence is a decreasing sequence
\[
        C_0\supseteq C_1\supseteq C_2\supseteq\cdots
\]
of subsets of $S$ with $\bigcap_{n<\omega}C_n=\emptyset$.  It is actively diagonalizable if there are functions
\[
        f_n:C_n\longrightarrow\omega
\]
such that for every $\xi\in D$ there is $n<\omega$ satisfying
\[
        p_\alpha(\xi)<f_n(\alpha)
        \quad\text{whenever }\alpha\in C_n\cap\supp_L(\xi).
\]
It is eventually actively diagonalizable if the functions may be chosen so that, for every $\xi\in D$, there is $n_\xi<\omega$ such that $p_\alpha(\xi)<f_n(\alpha)$ whenever $n\ge n_\xi$ and $\alpha\in C_n\cap\supp_L(\xi)$.
\end{definition}

\begin{proposition}\label{prop:active-normal-form}
For a ladder system $L$, the following are equivalent.
\begin{enumerate}[label=(\roman*),leftmargin=2em]
\item $X_L\in\Delta$.
\item Every evanescent ladder pair admits a tail diagonalization.
\item Every active ladder sequence is actively diagonalizable.
\item Every active ladder sequence is eventually actively diagonalizable.
\end{enumerate}
\end{proposition}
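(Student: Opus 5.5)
(i)$\Leftrightarrow$(ii) is Proposition~\ref{prop:tail-equivalence}, so the plan is to establish (ii)$\Rightarrow$(iii), (iii)$\Rightarrow$(iv), and (iv)$\Rightarrow$(ii). For (ii)$\Rightarrow$(iii), given an active ladder sequence $(C_n)$, take $B_n=\emptyset$. Then $(B_n,C_n)$ is an evanescent ladder pair, because $\bigcap_n C_n=\emptyset$. A tail diagonalization of this pair is exactly an active diagonalization, since the clause $\xi\notin B_n$ holds trivially.

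For (iii)$\Rightarrow$(iv), take the functions $f_n$ from (iii) and define new functions by a running maximum, $g_n(\alpha)=\max\{f_k(\alpha):k\le n\}$. Each $g_k$ with $k\le n$ is defined at $\alpha\in C_n$, since $C_n\subseteq C_k$. I would check whether this monotonization already gives eventual diagonalization; on reflection it gives the reverse behaviour. Once $p_\alpha(\xi)<f_k(\alpha)$ holds at stage $k$, bounds at later stages $n\ge k$ only increase under $g$. So if $\xi$ is cleared at stage $k$ for every $\alpha\in C_k\cap\supp_L(\xi)$, then for $n\ge k$ and $\alpha\in C_n\cap\supp_L(\xi)\subseteq C_k\cap\supp_L(\xi)$ we get $p_\alpha(\xi)<f_k(\alpha)\le g_n(\alpha)$. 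Hence $n_\xi=k$ works. The key points are that the $C_n$ decrease and that the max is taken over earlier stages. I expect this monotonization to be the conceptual heart of the argument, although it is short.

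For (iv)$\Rightarrow$(ii), let $(B_n,C_n)$ be an evanescent ladder pair. Then $(C_n)$ is decreasing, and its intersection is empty because $\bigcap_n(B_n\cup C_n)=\emptyset$ and $B_n\subseteq D$ is disjoint from $S$. Take eventual diagonalizers $f_n$ with thresholds $n_\xi$. Fix $\xi\in D$. Since $\bigcap_n B_n$ is disjoint from $\bigcap_n(B_n\cup C_n)=\emptyset$ on $D$, there is $m$ with $\xi\notin B_m$. Because the $B_n$ decrease, $\xi\notin B_n$ for all $n\ge m$. Choose $n=\max(m,n_\xi)$. Then $\xi\notin B_n$, and $p_\alpha(\xi)<f_n(\alpha)$ for all $\alpha\in C_n\cap\supp_L(\xi)$, so $(f_n)$ is a tail diagonalization.

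The main obstacle is to see that the eventual form (iv) is what allows the two independent stage requirements to be combined. The requirement from $B_n$ is upward-closed in $n$, and (iv) makes the ladder requirement upward-closed as well, so a common stage exists. This is why the proof routes through (iii)$\Rightarrow$(iv) with the running maximum rather than attempting (iii)$\Rightarrow$(ii) directly.
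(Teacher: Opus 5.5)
Your proof is correct and matches the paper's argument step for step. You use the pair $(\emptyset,C_n)$ for (ii)$\Rightarrow$(iii), the running maximum $g_n(\alpha)=\max\{f_k(\alpha):k\le n\}$ (well defined because the $C_n$ decrease) for (iii)$\Rightarrow$(iv), and a common stage beyond both $n_\xi$ and the stage at which $\xi$ leaves the $B_n$ for (iv)$\Rightarrow$(ii).

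Two wording slips should be fixed. First, it is the $f_k$ with $k\le n$, not the $g_k$, that are defined at $\alpha\in C_n$. Second, your remark that the monotonization ``gives the reverse behaviour'' contradicts the correct verification that immediately follows it, so it should be deleted.
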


\begin{proof}
The equivalence of (i) and (ii) is Proposition~\ref{prop:tail-equivalence}.  The implication (ii)$\Rightarrow$(iii) follows by applying (ii) to the evanescent pair $(\emptyset,C_n)_{n<\omega}$.

Assume (iii), and let $(C_n)_{n<\omega}$ be an active ladder sequence.  Choose functions $f_n:C_n\to\omega$ witnessing active diagonalizability.  Define
\[
        g_n(\alpha)=\max\{f_i(\alpha): i\leq n\text{ and }\alpha\in C_i\},
        \qquad \alpha\in C_n.
\]
The maximum is well defined because the sequence $(C_n)$ is decreasing.  Fix $\xi\in D$.  There is $n_0$ such that
\[
        p_\alpha(\xi)<f_{n_0}(\alpha)
        \quad\text{for all }\alpha\in C_{n_0}\cap\supp_L(\xi).
\]
If $n\geq n_0$ and $\alpha\in C_n\cap\supp_L(\xi)$, then $\alpha\in C_{n_0}$ and
\[
        p_\alpha(\xi)<f_{n_0}(\alpha)\leq g_n(\alpha).
\]
Thus $(g_n)$ witnesses eventual active diagonalizability, proving (iii)$\Rightarrow$(iv).

Finally, assume (iv), and let $(B_n,C_n)_{n<\omega}$ be an evanescent ladder pair.  Apply (iv) to $(C_n)_{n<\omega}$.  Then there are functions $f_n:C_n\to\omega$ such that, for each $\xi\in D$, the active inequality holds for all sufficiently large $n$.  Since $(B_n)$ is decreasing and $\bigcap_n(B_n\cup C_n)=\emptyset$, each $\xi\in D$ also lies outside $B_n$ for all sufficiently large $n$.  For each $\xi$, choose a stage at which both requirements hold.  Then
\[
        \xi\notin B_n
        \quad\text{and}\quad
        p_\alpha(\xi)<f_n(\alpha)
        \quad\text{whenever }\alpha\in C_n\cap\supp_L(\xi).
\]
Thus $(f_n)$ is a tail diagonalization of the original evanescent pair.  By Proposition~\ref{prop:tail-equivalence}, $X_L\in\Delta$.
\end{proof}

By the contrapositive of Proposition~\ref{prop:active-normal-form}, if
$X_L\notin\Delta$, some active ladder sequence $(C_n)_{n<\omega}$ is not
actively diagonalizable.  Thus failure of the $\Delta$-property can be
witnessed by an active ladder sequence alone.

In this normal form, isolated points affect the condition only through their supports and
positions on active ladders, while the eventual requirement accounts for the
decreasing isolated-point sets $B_n$.

For stationary carriers, Balogh, Eisworth, Gruenhage, Pavlov, and Szeptycki
define $M_{<\omega}$ as follows \cite[p.~189]{BEGPS} and prove that it is
equivalent to countable metacompactness \cite[Claim~1]{BEGPS}.  Combined with
Lemma~\ref{lem:Mlessomega-equiv}, the active normal form expresses this
$\Delta$-criterion in terms of ladder positions.  In their formulation, the uniformizing map is defined on $\omegaone$.  Because every ladder is contained in $D$, a witness on $\omegaone$ may be restricted to $D$, while a witness on $D$ may be extended arbitrarily to $\omegaone$.  Either operation preserves the uniformization requirement.  For arbitrary
carriers, $M_{<\omega}$ is therefore formulated in this $D$-based form.

\begin{definition}\label{def:Mlessomega}
Let $L$ be a ladder system on $S\subseteq E^{\omegaone}_\omega$ with
$L_\alpha=\{\lambda_\alpha(n):n<\omega\}\subseteq D$.  The ladder system $L$ satisfies the \emph{countable-metacompactness uniformization property}
$M_{<\omega}$ if for every $f:S\to\omega$ there is
\[
        F:D\longrightarrow[\omega]^{<\omega}
\]
such that, for every $\alpha\in S$,
\[
        f(\alpha)\in F(\beta)
        \quad\text{for all but finitely many }\beta\in L_\alpha.
\]
\end{definition}

\begin{lemma}\label{lem:Mlessomega-equiv}
Let $L$ be a ladder system on $S\subseteq E^{\omegaone}_\omega$.  The
following are equivalent.
\begin{enumerate}[label=(\roman*),leftmargin=2em]
\item $L$ satisfies $M_{<\omega}$.
\item Every active ladder sequence is eventually actively diagonalizable
\textup{(}condition \textup{(iv)} of Proposition~\ref{prop:active-normal-form}\textup{)}.
\end{enumerate}
This equivalence is purely combinatorial and does not use the topology of
$X_L$.
\end{lemma}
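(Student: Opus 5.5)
The plan is to prove both implications directly, translating between a label function $f:S\to\omega$ and the decreasing sequence of its upper level sets. The dictionary is: a label $m=f(\alpha)$ corresponds to the stages $n\le m$ at which $\alpha$ is active, and a finite label set $F(\xi)$ corresponds to the threshold stage $n_\xi$ beyond which $\xi$ must escape all active tails.

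\emph{(i)$\Rightarrow$(ii).}
\begin{enumerate}[label=(\alph*),leftmargin=2em]
\item Let $(C_n)_{n<\omega}$ be an active ladder sequence. Define the rank $f:S\to\omega$ by $f(\alpha)=\min\{n:\alpha\notin C_n\}$. This is well defined because $\bigcap_n C_n=\emptyset$. Since $(C_n)$ is decreasing, $\alpha\in C_n$ if and only if $n<f(\alpha)$.
\item Apply $M_{<\omega}$ to $f$ to obtain $F:D\to[\omega]^{<\omega}$. For $\alpha\in S$, let $k(\alpha)$ be the least $k$ such that $f(\alpha)\in F(\lambda_\alpha(j))$ for all $j\ge k$; it exists by the defining property of $F$.
\item Put $f_n=k\restriction C_n$ for every $n$, and for $\xi\in D$ set $n_\xi=1+\max F(\xi)$, with $n_\xi=0$ if $F(\xi)=\emptyset$.
\item Verify eventual diagonalization. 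Let $n\ge n_\xi$ and $\alpha\in C_n\cap\supp_L(\xi)$. Then $f(\alpha)>n\ge n_\xi>\max F(\xi)$, so $f(\alpha)\notin F(\xi)$. Since $\xi=\lambda_\alpha(p_\alpha(\xi))$ with $f(\alpha)\notin F(\xi)$, minimality of $k(\alpha)$ forces $p_\alpha(\xi)<k(\alpha)=f_n(\alpha)$, as required.
\end{enumerate}

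\emph{(ii)$\Rightarrow$(i).}
\begin{enumerate}[label=(\alph*),leftmargin=2em]
\item Given $f:S\to\omega$, put $C_n=\{\alpha\in S:f(\alpha)\ge n\}$. This sequence is decreasing with empty intersection, since each value $f(\alpha)$ is finite.
\item Condition (ii) yields functions $g_n:C_n\to\omega$ and thresholds $n_\xi$ for $\xi\in D$.
\item Define $F(\xi)=\{0,1,\dots,n_\xi-1\}$, which is finite.
\item Fix $\alpha\in S$ and write $m=f(\alpha)$, so that $\alpha\in C_m$. Let $\beta\in L_\alpha$ with $p_\alpha(\beta)\ge g_m(\alpha)$; all but finitely many $\beta\in L_\alpha$ qualify.
\item If $m\ge n_\beta$, the eventual diagonalization at stage $m$ would give $p_\alpha(\beta)<g_m(\alpha)$, a contradiction. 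Hence $m<n_\beta$, that is, $f(\alpha)\in F(\beta)$. This is exactly $M_{<\omega}$.
\end{enumerate}

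Neither direction refers to open sets, which confirms the remark that the equivalence is purely combinatorial. The only delicate point is off-by-one bookkeeping. The rank must be chosen so that membership $\alpha\in C_n$ corresponds exactly to $n<f(\alpha)$, and the finite set $F(\xi)$ must be an initial segment ending strictly below $n_\xi$. I would check these index conventions first and carefully. The rest is routine unwinding of the definitions, together with the observation that in (i)$\Rightarrow$(ii) the functions $f_n$ can be taken independent of $n$.
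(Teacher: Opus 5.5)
Your proof is correct and follows the paper's argument essentially step for step. Your rank $f(\alpha)=\min\{n:\alpha\notin C_n\}$ is the paper's block index $\rho(\alpha)$ shifted by one, $k(\alpha)$ plays the role of the paper's threshold $h(\alpha)$, and your initial segment $F(\xi)=\{0,\dots,n_\xi-1\}$ is a harmless enlargement of the paper's exact label set. One small wording correction: in (i)$\Rightarrow$(ii), what yields $p_\alpha(\xi)<k(\alpha)$ is the defining property of $k(\alpha)$, not its minimality.
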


\begin{proof}
Assume first that $L$ satisfies $M_{<\omega}$, and let
$C_0\supseteq C_1\supseteq C_2\supseteq\cdots$ be an active ladder sequence, so
$C_n\subseteq S$ and $\bigcap_n C_n=\emptyset$.  Since $(C_n)$ is decreasing
with empty intersection, each $\alpha\in C_0$ lies in a unique block
$C_{\rho(\alpha)}\setminus C_{\rho(\alpha)+1}$.  Define $f:S\to\omega$ by
$f(\alpha)=\rho(\alpha)$ for $\alpha\in C_0$ and $f(\alpha)=0$ for
$\alpha\in S\setminus C_0$.  Apply $M_{<\omega}$ to $f$ to obtain
$F:D\to[\omega]^{<\omega}$ with
\[
        f(\alpha)\in F(\beta)
        \quad\text{for all but finitely many }\beta\in L_\alpha\quad(\alpha\in S).
\]
For $\alpha\in S$ put
\[
        h(\alpha)=1+\max\Bigl(\{0\}\cup
        \{k<\omega:f(\alpha)\notin F(\lambda_\alpha(k))\}\Bigr).
\]
The exceptional set is finite because
$f(\alpha)\in F(\lambda_\alpha(k))$ for all but finitely many $k$, so the
maximum is well defined.  For each $n<\omega$, define $f_n:C_n\to\omega$ by
$f_n(\alpha)=h(\alpha)$ for $\alpha\in C_n$.  For each $\alpha$, the threshold $h(\alpha)$ is independent of the stage $n$.

To verify eventual active diagonalizability, fix $\xi\in D$ and set
$n_\xi=1+\max F(\xi)$ \textup{(}with $\max\emptyset=0$\textup{)}.  Let
$n\ge n_\xi$ and $\alpha\in C_n\cap\supp_L(\xi)$, and suppose toward a
contradiction that $p_\alpha(\xi)\ge f_n(\alpha)=h(\alpha)$.  By the
definition of $h(\alpha)$, every position $k\ge h(\alpha)$ satisfies
$f(\alpha)\in F(\lambda_\alpha(k))$.  Since $\xi=\lambda_\alpha(p_\alpha(\xi))$
and $p_\alpha(\xi)\ge h(\alpha)$, it follows that
\[
        f(\alpha)\in F(\xi).
\]
But $\alpha\in C_n$ implies $f(\alpha)=\rho(\alpha)\ge n\ge n_\xi>\max F(\xi)$,
so $f(\alpha)\notin F(\xi)$, a contradiction.  Hence for all $n\ge n_\xi$
and all $\alpha\in C_n\cap\supp_L(\xi)$, the inequality $p_\alpha(\xi)<f_n(\alpha)$ holds.
Thus $(f_n)_{n<\omega}$ witnesses eventual active diagonalizability of
$(C_n)$.

Conversely, assume every active ladder sequence
is eventually actively diagonalizable, and let $f:S\to\omega$ be given.  Put
\[
        C_n=\{\alpha\in S:f(\alpha)\ge n\},\qquad n<\omega.
\]
Then $C_0=S\supseteq C_1\supseteq C_2\supseteq\cdots$ is decreasing, and
$\bigcap_n C_n=\emptyset$ because every value of $f$ is finite.
Hence $(C_n)$ is an active ladder sequence.  Choose $f_n:C_n\to\omega$
witnessing eventual active diagonalizability.  Define $F:D\to[\omega]^{<\omega}$
by
\[
        F(\xi)=\bigl\{\,n<\omega:\exists\,\alpha\in\supp_L(\xi)
        \text{ with } f(\alpha)=n \text{ and } p_\alpha(\xi)\ge f_n(\alpha)\,\bigr\}.
\]

Fix $\xi\in D$.  Choose $n_\xi$ so that, for every $n\ge n_\xi$ and every $\alpha\in C_n\cap\supp_L(\xi)$,
$p_\alpha(\xi)<f_n(\alpha)$.
Suppose $n\in F(\xi)$, and choose $\alpha$ witnessing this membership.  Then
$f(\alpha)=n$, so
$\alpha\in C_n$, and $\alpha\in C_n\cap\supp_L(\xi)$ with
$p_\alpha(\xi)\ge f_n(\alpha)$.  By the choice of $n_\xi$ this forces
$n<n_\xi$.  Hence $F(\xi)\subseteq\{0,1,\dots,n_\xi-1\}$ is finite.

It remains to see that $F$ uniformizes $f$.  Fix $\alpha\in S$ and put
$n_0=f(\alpha)$, so $\alpha\in C_{n_0}$.  For every $\beta=\lambda_\alpha(k)\in L_\alpha$ with
$k\ge f_{n_0}(\alpha)$, $f(\alpha)=n_0$ and
$p_\alpha(\beta)=k\ge f_{n_0}(\alpha)$.  Hence
$n_0=f(\alpha)\in F(\beta)$.  This holds for all but the first $f_{n_0}(\alpha)$ members of $L_\alpha$, so
$f(\alpha)\in F(\beta)$ for all but finitely many $\beta\in L_\alpha$.  Thus $F$ witnesses
$M_{<\omega}$ for $f$.
\end{proof}

Lemma~\ref{lem:Mlessomega-equiv} gives the correspondence
\[
\begin{array}{ccl}
 f:S\to\omega\quad\text{as a labeling of the carrier}
 &\longleftrightarrow&
 (C_n)_{n<\omega},\quad C_n=\{\alpha\in S:f(\alpha)\ge n\},\\[2mm]
 F(\xi)\in[\omega]^{<\omega}\quad\text{as a finite label set at }\xi
 &\longleftrightarrow&
 f_n(\alpha)\in\omega\quad\text{as a position threshold}.
\end{array}
\]
For stationary carriers, the equivalence
\[
        L\text{ satisfies }M_{<\omega}\quad\Longleftrightarrow\quad X_L\in\Delta
\]
follows from \cite[Claim~1]{BEGPS} and \cite[Prop.~4.1]{LS}.  Carvalho, Inamdar, and Rinot state the corresponding
version over stationary subsets of regular uncountable cardinals
\cite[Fact~4.2]{CIR}, drawing on \cite[Prop.~4.1]{LS} and a direct
generalization of \cite[Claim~1]{BEGPS}.  For arbitrary carriers, Lemma~\ref{lem:Mlessomega-equiv} and
Proposition~\ref{prop:active-normal-form} together establish this equivalence directly.  In
this translation, finite label sets $F(\xi)$ become integer thresholds
$f_n(\alpha)$, while the eventual requirement accounts for the isolated-point sets
$B_n$ in the evanescent pairs of Proposition~\ref{prop:tail-equivalence}.  The support-finite decomposition, the tail criteria, and the examples all use
this position formulation.  On stationary carriers, the
single-function threshold condition of Definition~\ref{def:U} is the
strict-threshold version of \cite[Fact~4.2(3)]{CIR}.  For arbitrary carriers,
it gives an equivalent position-level criterion without quantification over
active decreasing sequences.  The active normal form, by contrast, is tailored
to the decomposition arguments.

\subsection{Support-finite decompositions}\label{sec:support-finite}

Local active diagonalizations can be combined across a support-finite partition.  The support of each isolated point meets only finitely many pieces,
so the corresponding eventual bounds can be synchronized at a single stage.
Empty pieces are ignored.

\begin{definition}
Let $\mathcal P$ be a partition of $S$.  The partition $\mathcal P$ is support-finite for $L$ if, for every $\xi\in D$, the set
\[
        \{P\in\mathcal P:\supp_L(\xi)\cap P\neq\emptyset\}
\]
is finite.
For $P\in\mathcal P$, write $L\restriction P$ for the restricted ladder system
\[
        \{L_\alpha:\alpha\in P\}
\]
on the carrier $P$.  Its associated subspace is
\[
        X_P=P\cup\bigcup_{\alpha\in P}L_\alpha.
\]
\end{definition}

\begin{theorem}\label{thm:support-finite-decomposition}
Let $L$ be a ladder system on $S$.  Suppose that $S$ has a support-finite partition $\mathcal P$ such that
\[
        X_P\in\Delta
        \quad\text{for every }P\in\mathcal P.
\]
Then $X_L\in\Delta$.
\end{theorem}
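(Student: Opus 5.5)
The proof will use the active normal form, Proposition~\ref{prop:active-normal-form}, in its restricted version for each piece. Let $(C_n)_{n<\omega}$ be an active ladder sequence for $L$. By (i)$\Rightarrow$(iv) it suffices to show that $(C_n)$ is eventually actively diagonalizable. For each $P\in\mathcal P$ the sequence $(C_n\cap P)_{n<\omega}$ is decreasing with empty intersection. It is therefore an active ladder sequence for the restricted system $L\restriction P$, with isolated part $D_P$ and supports $\supp_{L\restriction P}(\xi)=P\cap\supp_L(\xi)$. Since $X_P\in\Delta$, the restricted form of Proposition~\ref{prop:active-normal-form} gives functions $f^P_n:C_n\cap P\to\omega$ with the following property: for every $\xi\in D_P$ there is $n^P_\xi$ such that $p_\alpha(\xi)<f^P_n(\alpha)$ whenever $n\ge n^P_\xi$ and $\alpha\in C_n\cap P\cap\supp_L(\xi)$. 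Positions are unchanged under restriction, so these inequalities are exactly the ones needed in $L$.

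Next, glue the pieces. Because $\mathcal P$ is a partition, each $\alpha\in C_n$ lies in a unique piece $P(\alpha)$. Define $f_n(\alpha)=f^{P(\alpha)}_n(\alpha)$ for $\alpha\in C_n$. Fix $\xi\in D$. By support-finiteness, the family $\mathcal P_\xi=\{P\in\mathcal P:\supp_L(\xi)\cap P\neq\emptyset\}$ is finite. For each $P\in\mathcal P_\xi$ we have $\xi\in D_P$, so $n^P_\xi$ is defined. Put $n_\xi=\max\{n^P_\xi:P\in\mathcal P_\xi\}$, with $n_\xi=0$ if $\mathcal P_\xi$ is empty. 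This maximum exists precisely because $\mathcal P_\xi$ is finite.

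Now let $n\ge n_\xi$ and $\alpha\in C_n\cap\supp_L(\xi)$. Then $P(\alpha)\in\mathcal P_\xi$ and $n\ge n^{P(\alpha)}_\xi$, so $p_\alpha(\xi)<f^{P(\alpha)}_n(\alpha)=f_n(\alpha)$. Hence $(f_n)$ witnesses eventual active diagonalizability of $(C_n)$. By (iv)$\Rightarrow$(i) of Proposition~\ref{prop:active-normal-form}, $X_L\in\Delta$.

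The only delicate point is the synchronization step. This is why the eventual form (iv) is used rather than (iii). In (iii), each piece supplies a single good stage for $\xi$, and these stages need not agree across pieces. In (iv), each piece supplies a cofinal set of good stages, and finitely many such sets can be intersected by taking a maximum. Finiteness of $\mathcal P_\xi$ is exactly the hypothesis needed there. Without it, the eventual bounds $n^P_\xi$ could be unbounded, and no single stage would work for $\xi$.
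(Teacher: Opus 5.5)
Your proof is correct and follows essentially the same route as the paper: you restrict the active ladder sequence to each piece, obtain eventual active diagonalizations from the restricted form of Proposition~\ref{prop:active-normal-form}, glue them along the partition, and synchronize the finitely many eventual bounds at each $\xi$ by taking a maximum. One small slip: in your first paragraph, the reduction to eventual diagonalizability uses (iv)$\Rightarrow$(i), not (i)$\Rightarrow$(iv), and you state this correctly at the end.
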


\begin{proof}
By Proposition~\ref{prop:active-normal-form}, it is enough to diagonalize every active ladder sequence on $S$.  Let
\[
        C_0\supseteq C_1\supseteq C_2\supseteq\cdots
\]
be an active ladder sequence on $S$.  Fix $P\in\mathcal P$.  Then
\[
        C_n^P=C_n\cap P,
        \qquad n<\omega.
\]
These sets form an active ladder sequence for the restricted ladder system on
$P$.  Since $X_P\in\Delta$, Proposition~\ref{prop:active-normal-form},
applied to that restricted system, gives functions
\[
        f_n^P:C_n^P\longrightarrow\omega
\]
such that, for every $\xi\in D_P$, the active inequality holds for all sufficiently large $n$.

Since $\mathcal P$ is a partition of $S$, each $\alpha\in C_n$ belongs to a unique $P\in\mathcal P$.  Define $f_n:C_n\to\omega$ by
\[
        f_n(\alpha)=f_n^P(\alpha)
        \quad\text{whenever }\alpha\in C_n\cap P.
\]

Fix $\xi\in D$.  By support-finiteness, there are only finitely many $P\in\mathcal P$ such that $\supp_L(\xi)\cap P\neq\emptyset$.  For each such piece $P$, the point $\xi$ lies in $D_P$, and the sequence $(f_n^P)_{n<\omega}$ satisfies the active inequality at $\xi$ for all sufficiently large $n$.  Choose a common stage $n_0$ that works for all these pieces.  If $n\geq n_0$ and $\alpha\in C_n\cap\supp_L(\xi)$, then $\alpha$ belongs to one of the finitely many relevant pieces $P$, and hence
\[
        p_\alpha(\xi)<f_n^P(\alpha)=f_n(\alpha).
\]
Thus $(f_n)_{n<\omega}$ witnesses eventual active diagonalizability of $(C_n)_{n<\omega}$.  Proposition~\ref{prop:active-normal-form} gives $X_L\in\Delta$.
\end{proof}

In particular, a support-finite partition of $S$ into countable pieces implies
$X_L\in\Delta$, because every countable restricted ladder-system space is a
$\Delta$-space by the countable $T_1$ argument in
Section~\ref{sec:preliminaries}.

\subsection{Crossing obstructions and support colorings}\label{sec:support-coloring}

The contrapositive of the decomposition theorem gives a crossing obstruction.  A partition
$\mathcal P$ of $S$ is locally $\Delta$ for $L$ when $X_P\in\Delta$ for every
$P\in\mathcal P$.  For $\xi\in D$, put
\[
        \operatorname{cr}_{\mathcal P}(\xi)=
        \{P\in\mathcal P:\supp_L(\xi)\cap P\neq\emptyset\}.
\]
Thus $\operatorname{cr}_{\mathcal P}(\xi)$ is the set of pieces of $\mathcal P$ met by the
support of $\xi$.

\begin{corollary}\label{cor:infinite-crossing-obstruction}
Let $\mathcal P$ be a locally $\Delta$ partition of $S$.  If $X_L\notin\Delta$,
then there is $\xi\in D$ for which $\operatorname{cr}_{\mathcal P}(\xi)$
is infinite.
\end{corollary}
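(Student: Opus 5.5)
This is the contrapositive of Theorem~\ref{thm:support-finite-decomposition}, so I would argue by contradiction. Let $\mathcal P$ be a locally $\Delta$ partition of $S$, so $X_P\in\Delta$ for every $P\in\mathcal P$. Suppose that $\operatorname{cr}_{\mathcal P}(\xi)$ is finite for every $\xi\in D$.

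By definition, $\operatorname{cr}_{\mathcal P}(\xi)$ is exactly the set $\{P\in\mathcal P:\supp_L(\xi)\cap P\neq\emptyset\}$ from the definition of a support-finite partition. So the assumption says precisely that $\mathcal P$ is support-finite for $L$.

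Together with local $\Delta$-ness, this puts us in the hypotheses of Theorem~\ref{thm:support-finite-decomposition}, which gives $X_L\in\Delta$. That contradicts $X_L\notin\Delta$. Hence some $\xi\in D$ has $\operatorname{cr}_{\mathcal P}(\xi)$ infinite.

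There is no real obstacle. The only point to check is the unwinding of definitions: "not support-finite" means exactly "some $\xi$ has infinitely many pieces met by its support." The convention that empty pieces are ignored is harmless, since an empty piece never meets $\supp_L(\xi)$.
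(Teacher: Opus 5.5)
Your proof is correct and matches the paper's argument: if every $\operatorname{cr}_{\mathcal P}(\xi)$ were finite, then $\mathcal P$ would be support-finite, and since $\mathcal P$ is locally $\Delta$, Theorem~\ref{thm:support-finite-decomposition} would give $X_L\in\Delta$, a contradiction.
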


\begin{proof}
If $|\operatorname{cr}_{\mathcal P}(\xi)|<\omega$ for every $\xi\in D$, then
$\mathcal P$ is support-finite.  Since $\mathcal P$ is locally $\Delta$,
Theorem~\ref{thm:support-finite-decomposition} gives $X_L\in\Delta$, a
contradiction.
\end{proof}

Corollary~\ref{cor:infinite-crossing-obstruction} therefore implies that if
$S$ is partitioned into countable sets and $X_L\notin\Delta$, there is an
isolated point whose support meets infinitely many pieces.

Equivalently, the criterion may be stated in terms of colorings.  Let
\[
        \mathcal H_L=
        \{\supp_L(\xi):\xi\in D\text{ and }\supp_L(\xi)\neq\emptyset\}
\]
be the support hypergraph of $L$.  A coloring $c:S\to I$ is support-finite for
$L$ if
\[
        |c[\supp_L(\xi)]|<\omega
        \quad\text{for every }\xi\in D.
\]
It is locally $\Delta$ for $L$ if $X_{c^{-1}\{i\}}\in\Delta$ for every nonempty
fiber.

\begin{corollary}\label{cor:support-coloring-criterion}
Let $c:S\to I$ be a support-finite coloring which is locally $\Delta$ for $L$.
Then $X_L\in\Delta$.  Consequently, if $X_L\notin\Delta$ and $c$ is locally
$\Delta$ for $L$, then $c[\supp_L(\xi)]$ is infinite for some $\xi\in D$.
In particular, if $S$ has a coloring into countable fibers and every support
uses only finitely many colors, then $X_L\in\Delta$.
\end{corollary}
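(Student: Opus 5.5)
The plan is to reduce Corollary~\ref{cor:support-coloring-criterion} to Theorem~\ref{thm:support-finite-decomposition}, using the partition of $S$ into the nonempty fibres of $c$. Given a support-finite, locally $\Delta$ coloring $c:S\to I$, put
\[
        \mathcal P=\{c^{-1}\{i\}:i\in I,\ c^{-1}\{i\}\neq\emptyset\}.
\]
This is a partition of $S$, and we then verify the two hypotheses of the theorem.

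\emph{Support-finiteness.} For $\xi\in D$, a piece $P=c^{-1}\{i\}$ meets $\supp_L(\xi)$ exactly when $i\in c[\supp_L(\xi)]$. The map $P\mapsto i$ is therefore a bijection from $\operatorname{cr}_{\mathcal P}(\xi)$ onto $c[\supp_L(\xi)]$, and the latter is finite by assumption.

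\emph{Local $\Delta$-property.} For $P=c^{-1}\{i\}$, the subspace $X_P$ is exactly $X_{c^{-1}\{i\}}$ in the paper's convention for restricted spaces. This space is a $\Delta$-space because $c$ is locally $\Delta$.

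Theorem~\ref{thm:support-finite-decomposition} now yields $X_L\in\Delta$. The second sentence of the corollary is just the contrapositive. If $X_L\notin\Delta$ and $c$ is locally $\Delta$, then $c$ cannot be support-finite, so $c[\supp_L(\xi)]$ is infinite for some $\xi\in D$. Alternatively, this follows from Corollary~\ref{cor:infinite-crossing-obstruction} via the same bijection between crossing pieces and colours.

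\emph{The ``in particular'' clause.} Here every fibre $P=c^{-1}\{i\}$ is countable, so the claim reduces to showing that each such $X_P$ is a $\Delta$-space. The space $X_P=P\cup\bigcup_{\alpha\in P}L_\alpha$ is a countable union of countable sets, hence countable. It is also $T_1$, being a subspace of the Hausdorff space $X_L$. By the countable $T_1$ argument in Section~\ref{sec:preliminaries}, $X_P\in\Delta$. Hence $c$ is locally $\Delta$, and the first part applies.

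There is no genuine obstacle. The only point needing care is the bookkeeping that empty fibres are discarded from $\mathcal P$ and that the restricted space $X_{c^{-1}\{i\}}$ agrees with the space $X_P$ used in the theorem; both follow directly from the definitions.
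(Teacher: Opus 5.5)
Your proposal is correct and takes essentially the same approach as the paper. It partitions $S$ into the nonempty fibres of $c$, observes that support-finiteness of $c$ is exactly support-finiteness of this partition, and applies Theorem~\ref{thm:support-finite-decomposition}. The second clause then follows from Corollary~\ref{cor:infinite-crossing-obstruction} or the contrapositive, and the last clause follows from the countable $T_1$ argument applied to the countable fibre spaces.
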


\begin{proof}
The nonempty fibers of $c$ form a partition of $S$.  Support-finiteness of the
coloring is exactly support-finiteness of this partition.  The first assertion
therefore follows from Theorem~\ref{thm:support-finite-decomposition}.  The
second assertion is Corollary~\ref{cor:infinite-crossing-obstruction} applied
to the fiber partition.  If the fibers are countable, then each fiber space is a
$\Delta$-space by the countable $T_1$ argument in Section~\ref{sec:preliminaries}.
\end{proof}

\subsection{Club-small sets and the trace criterion}\label{sec:club-small}

Given $A\subseteq S$, each club $C$ determines an interval partition $\mathcal P_C$ of the carrier and the corresponding block ideal $\mathcal B_C(A)$.  Allowing $C$ to vary gives the
notion of a club-small set.  The family $\mathcal J_{\rm cb}$ consists of those sets that admit countable
support-point-finite covers by club-small sets, while the trace criterion
characterizes club-smallness itself.  These notions depend only on incidence across club blocks, not on the
explicit position thresholds in the $\Delta$-criterion.
Throughout $S\subseteq E^{\omegaone}_\omega$ and
$D=\omegaone\setminus S$.

\subsubsection{A club partition of the carrier}
Let $C$ be a club subset of $\omegaone$, and write its increasing enumeration as
\[
        C=\{\gamma_\eta:\eta<\omegaone\}.
\]
The enumeration is continuous because $C$ is closed.  For $\eta<\omegaone$, put
\[
        I_\eta(C)=S\cap [\gamma_\eta,\gamma_{\eta+1}).
\]
The sets $I_\eta(C)$ form a partition of $S\setminus \gamma_0$.  By convention, the countable initial part $S\cap\gamma_0$ is added to $I_0(C)$.  With this convention,
\[
        \mathcal P_C=\{I_\eta(C):\eta<\omegaone\}
\]
is a partition of $S$ into countable pieces.

\subsubsection{Block ideals and club-small sets}
\begin{definition}
Let $A\subseteq S$ and let $\mathcal P$ be a partition of $S$.  Define
\[
        \mathcal B_{\mathcal P}(A)=
        \{B\subseteq A: |\operatorname{cr}_{\mathcal P}^{B}(\xi)|<\omega
        \text{ for every }\xi\in D\}.
\]
Here
\[
        \operatorname{cr}_{\mathcal P}^{B}(\xi)=
        \{P\in\mathcal P:\supp_L(\xi)\cap B\cap P\neq\emptyset\}.
\]
When $C\subseteq\omegaone$ is club, write
\[
        \mathcal B_C(A)=\mathcal B_{\mathcal P_C}(A).
\]
Finally put
\[
        \mathcal B_{\rm club}(A)=
        \{B\subseteq A: B\in\mathcal B_C(A)\text{ for some club }C\subseteq\omegaone\}.
\]
\end{definition}

\begin{lemma}\label{lem:block-ideal}
For every partition $\mathcal P$ of $S$ and every $A\subseteq S$, the family $\mathcal B_{\mathcal P}(A)$ is an ideal on $A$.
\end{lemma}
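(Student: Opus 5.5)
I will verify the ideal axioms directly from the definition of $\mathcal B_{\mathcal P}(A)$, taking "ideal on $A$" in the usual sense: a family of subsets of $A$ that contains $\emptyset$, is closed under subsets, and is closed under finite unions. Whether $A$ itself belongs to the family is not required. The whole argument rests on two monotonicity properties of the crossing sets $\operatorname{cr}^{B}_{\mathcal P}(\xi)$ as functions of $B$, checked pointwise for each $\xi\in D$.

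First, $\emptyset\in\mathcal B_{\mathcal P}(A)$, because $\supp_L(\xi)\cap\emptyset\cap P=\emptyset$ for every $P$. Hence $\operatorname{cr}^{\emptyset}_{\mathcal P}(\xi)=\emptyset$ for all $\xi\in D$.

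Second, for downward closure let $B'\subseteq B\in\mathcal B_{\mathcal P}(A)$. For each $\xi\in D$ and $P\in\mathcal P$ we have $\supp_L(\xi)\cap B'\cap P\subseteq\supp_L(\xi)\cap B\cap P$. Therefore $\operatorname{cr}^{B'}_{\mathcal P}(\xi)\subseteq\operatorname{cr}^{B}_{\mathcal P}(\xi)$, which is finite. Since $B'\subseteq A$, this gives $B'\in\mathcal B_{\mathcal P}(A)$.

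Third, for finite unions let $B_1,B_2\in\mathcal B_{\mathcal P}(A)$. Intersection distributes over union, so $\supp_L(\xi)\cap(B_1\cup B_2)\cap P\neq\emptyset$ holds exactly when the corresponding set is nonempty for $B_1$ or for $B_2$. Hence
\[
        \operatorname{cr}^{B_1\cup B_2}_{\mathcal P}(\xi)=\operatorname{cr}^{B_1}_{\mathcal P}(\xi)\cup\operatorname{cr}^{B_2}_{\mathcal P}(\xi).
\]
This is a union of two finite sets, so it is finite for every $\xi\in D$. Since $B_1\cup B_2\subseteq A$, it follows that $B_1\cup B_2\in\mathcal B_{\mathcal P}(A)$.

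There is no real obstacle in this argument; the only thing to get right is the union identity for crossing sets. One could also ask whether $\mathcal B_{\mathcal P}(A)$ is proper, that is, whether $A\notin\mathcal B_{\mathcal P}(A)$. This fails in general, since $A$ itself may be support-finite, so I will explicitly not claim properness.
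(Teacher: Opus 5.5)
Your proof is correct and follows the paper's argument. Downward closure comes from monotonicity of $\operatorname{cr}^{B}_{\mathcal P}(\xi)$ in $B$, and finite unions come from the crossing-set identity; the paper uses only the inclusion $\operatorname{cr}^{B_0\cup B_1}_{\mathcal P}(\xi)\subseteq\operatorname{cr}^{B_0}_{\mathcal P}(\xi)\cup\operatorname{cr}^{B_1}_{\mathcal P}(\xi)$, which suffices, and your explicit $\emptyset$ check and properness remark agree with the paper's convention that ideals need not be proper.
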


\begin{proof}
The defining condition is inherited by subsets, so the family is downward closed.  If $B_0,B_1\in\mathcal B_{\mathcal P}(A)$ and $\xi\in D$, then
\[
        \operatorname{cr}_{\mathcal P}^{B_0\cup B_1}(\xi)
        \subseteq
        \operatorname{cr}_{\mathcal P}^{B_0}(\xi)
        \cup
        \operatorname{cr}_{\mathcal P}^{B_1}(\xi).
\]
The right-hand side is finite, so $B_0\cup B_1\in\mathcal B_{\mathcal P}(A)$.
\end{proof}

\begin{lemma}\label{lem:coarsening}
Let $A\subseteq S$, and let $\mathcal Q$ be a coarsening of a partition
$\mathcal P$ of $S$.  If $B\in\mathcal B_{\mathcal P}(A)$, then
$B\in\mathcal B_{\mathcal Q}(A)$.
\end{lemma}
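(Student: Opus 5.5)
The plan is to compare the crossing sets for $\mathcal P$ and for $\mathcal Q$ directly, using a map from pieces of $\mathcal P$ to pieces of $\mathcal Q$. Since $\mathcal Q$ is a coarsening of $\mathcal P$, every $P\in\mathcal P$ is contained in a unique $Q\in\mathcal Q$. The pieces are disjoint and nonempty, so this $Q$ is unique; empty pieces have already been ignored. Write $\pi(P)$ for that $Q$. Because $\mathcal P$ is a partition of $S$, this defines a function $\pi:\mathcal P\to\mathcal Q$.

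Fix $B\in\mathcal B_{\mathcal P}(A)$ and $\xi\in D$. I will show
\[
        \operatorname{cr}_{\mathcal Q}^{B}(\xi)\subseteq
        \pi\bigl[\operatorname{cr}_{\mathcal P}^{B}(\xi)\bigr].
\]
Let $Q\in\operatorname{cr}_{\mathcal Q}^{B}(\xi)$, and pick $\alpha\in\supp_L(\xi)\cap B\cap Q$. Let $P$ be the unique piece of $\mathcal P$ containing $\alpha$. Then $P\in\operatorname{cr}_{\mathcal P}^{B}(\xi)$, since $\alpha$ witnesses that $\supp_L(\xi)\cap B\cap P\ne\emptyset$. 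Also $\alpha\in P\subseteq\pi(P)$ and $\alpha\in Q$. Disjointness of the pieces of $\mathcal Q$ then gives $Q=\pi(P)$.

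The image of a finite set under $\pi$ is finite, so $\operatorname{cr}_{\mathcal Q}^{B}(\xi)$ is finite for every $\xi$. Since $B\subseteq A$ is unchanged, this gives $B\in\mathcal B_{\mathcal Q}(A)$.

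There is no real obstacle here. The only point to be careful about is the meaning of "coarsening": each piece of $\mathcal P$ must lie inside a single piece of $\mathcal Q$, which is exactly what makes $\pi$ well defined.
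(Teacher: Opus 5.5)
Your proof is correct and follows essentially the paper's argument. The paper notes that every $\mathcal Q$-piece meeting $\supp_L(\xi)\cap B$ contains a $\mathcal P$-piece meeting the same set; your map $\pi$ makes explicit the counting step the paper leaves implicit, namely that disjointness of the $\mathcal Q$-pieces bounds their number by the finitely many relevant $\mathcal P$-pieces.
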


\begin{proof}
Every member of $\mathcal Q$ is a union of members of $\mathcal P$.  For each
$\xi\in D$, every $\mathcal Q$-piece meeting $\supp_L(\xi)\cap B$ therefore
contains a $\mathcal P$-piece meeting the same set.  Hence only finitely many
$\mathcal Q$-pieces meet $\supp_L(\xi)\cap B$.
\end{proof}

The family $\mathcal B_{\rm club}(A)$ is also an ideal on $A$.  A club witnessing $B\in\mathcal B_{\rm club}(A)$ also witnesses every subset of $B$, so the family is downward
closed.  If $B_0,B_1\in\mathcal B_{\rm club}(A)$, choose clubs $C_0,C_1$ such
that $B_i\in\mathcal B_{C_i}(A)$ for $i=0,1$, and put $E=C_0\cap C_1$.  Under the convention that the countable initial segment determined by each club
is included in its first block, the club partition $\mathcal P_E$ coarsens each
$\mathcal P_{C_i}$.  By Lemma~\ref{lem:coarsening}, both $B_i$ belong to
$\mathcal B_E(A)$, and Lemma~\ref{lem:block-ideal} gives
$B_0\cup B_1\in\mathcal B_E(A)\subseteq\mathcal B_{\rm club}(A)$.

\begin{definition}
A set $A\subseteq S$ is club-small if
\[
        A\in\mathcal B_{\rm club}(A).
\]
A sequence $(A_m)_{m<\omega}$ of subsets of $S$ is support-point-finite if, for every $\xi\in D$, the set
\[
        \{m<\omega:\supp_L(\xi)\cap A_m\neq\emptyset\}
\]
is finite.
\end{definition}

\begin{lemma}\label{lem:club-small-delta}
If $A\subseteq S$ is club-small, then the restricted ladder-system space $X_A$ is a $\Delta$-space.
\end{lemma}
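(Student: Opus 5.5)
The plan is to apply the support-finite decomposition theorem to the restricted ladder system $L\restriction A$, using the countable blocks of a club partition as the pieces. Since $A$ is club-small, there is a club $C\subseteq\omegaone$ with $A\in\mathcal B_C(A)$. By definition, for every $\xi\in D$ the set $\operatorname{cr}^{A}_{\mathcal P_C}(\xi)$ of blocks $I_\eta(C)$ meeting $\supp_L(\xi)\cap A$ is finite.

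First I will build the partition of the carrier $A$. Put $\mathcal P^A=\{I_\eta(C)\cap A:\eta<\omegaone\}\setminus\{\emptyset\}$, which is a partition of $A$. Each piece is countable, because every block $I_\eta(C)$, including the enlarged first block containing $S\cap\gamma_0$, is a subset of a countable ordinal. Next I check that $\mathcal P^A$ is support-finite for $L\restriction A$. For $\xi\in D_A$ we have $\supp_{L\restriction A}(\xi)=A\cap\supp_L(\xi)$, and the pieces of $\mathcal P^A$ meeting this set correspond injectively to the blocks counted in $\operatorname{cr}^{A}_{\mathcal P_C}(\xi)$. So there are only finitely many of them.

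Then, for each piece $P\in\mathcal P^A$, the restricted space $X_P=P\cup\bigcup_{\alpha\in P}L_\alpha$ is countable, being a countable union of countable sets. It is also Hausdorff, as a subspace of $X_L$. The countable $T_1$ argument of Section~\ref{sec:preliminaries} therefore gives $X_P\in\Delta$. Finally, Theorem~\ref{thm:support-finite-decomposition}, applied to the restricted system $L\restriction A$ (as the paper permits, replacing $S,D,L$ by $A,D_A,L\restriction A$), yields $X_A\in\Delta$. Equivalently, this is the ``in particular'' clause of Corollary~\ref{cor:support-coloring-criterion}, with the coloring $\alpha\mapsto\eta$ for $\alpha\in I_\eta(C)\cap A$.

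The only point needing care is the bookkeeping for the restriction. One must check that the spaces $X_P$ formed inside $L\restriction A$ coincide with the $X_P$ of Theorem~\ref{thm:support-finite-decomposition}, and that support-finiteness is measured with respect to $\supp_{L\restriction A}$ rather than $\supp_L$. Both follow directly from the definitions, since the block-ideal condition already intersects supports with $A$. I do not expect a genuine obstacle.
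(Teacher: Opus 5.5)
Your proposal is correct and follows essentially the same route as the paper's proof. Both take a club $C$ with $A\in\mathcal B_C(A)$, partition $A$ into the countable pieces $A\cap I_\eta(C)$, observe that this partition is support-finite for $L\restriction A$, and use the countable $T_1$ argument to get $X_P\in\Delta$ for each piece. Theorem~\ref{thm:support-finite-decomposition} applied to the restricted system then gives $X_A\in\Delta$.
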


\begin{proof}
Choose a club $C\subseteq\omegaone$ such that $A\in\mathcal B_C(A)$.  The sets
$A\cap I_\eta(C)$, $\eta<\omegaone$, form a partition of $A$ into
countable pieces.  For every $\xi\in D$, only finitely many of these pieces meet $\supp_L(\xi)\cap A$.  Hence this partition is support-finite for the restricted ladder system on $A$.  For each piece $P$, the associated space $X_P$ is countable and therefore a $\Delta$-space by the countable $T_1$ argument in Section~\ref{sec:preliminaries}.  Applying Theorem~\ref{thm:support-finite-decomposition} to the restricted ladder system on $A$ gives $X_A\in\Delta$.
\end{proof}

\subsubsection{The club-small decomposition family}
\begin{definition}
Assume $S\subseteq E^{\omegaone}_\omega$.  Define $\mathcal J_{\rm cb}$ to be the family of all sets $A\subseteq S$ for which there is a sequence $(A_m)_{m<\omega}$ of club-small subsets of $S$ such that
\[
        A\subseteq \bigcup_{m<\omega}A_m
\]
and the sequence $(A_m)_{m<\omega}$ is support-point-finite.  The family so defined is the club-small decomposition family associated with $L$.
\end{definition}

\begin{lemma}\label{lem:Jcb-ideal}
The family $\mathcal J_{\rm cb}$ is an ideal on $S$.  Moreover,
\[
        \mathcal B_{\rm club}(S)\subseteq \mathcal J_{\rm cb}.
\]
\end{lemma}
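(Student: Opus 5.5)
The plan is to verify the ideal axioms directly from the definition of $\mathcal J_{\rm cb}$. The inclusion $\mathcal B_{\rm club}(S)\subseteq\mathcal J_{\rm cb}$ will come from using a one-term cover padded with empty sets.

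\emph{Downward closure.} If $A\in\mathcal J_{\rm cb}$ is witnessed by $(A_m)_{m<\omega}$ and $A'\subseteq A$, then the same sequence witnesses $A'\in\mathcal J_{\rm cb}$. The covering condition is monotone, and support-point-finiteness does not involve $A$ at all. The empty set belongs to $\mathcal J_{\rm cb}$ via the constant sequence $A_m=\emptyset$. This works because $\emptyset\in\mathcal B_{\rm club}(\emptyset)$, since $\operatorname{cr}^{\emptyset}_{\mathcal P}(\xi)=\emptyset$, and because no $\xi$ meets any $A_m$.

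\emph{Finite unions.} Let $A$ be witnessed by $(A_m)$ and $A'$ by $(A'_m)$. Interleave the two sequences by setting $A''_{2m}=A_m$ and $A''_{2m+1}=A'_m$. Every term is club-small, and the union covers $A\cup A'$. For $\xi\in D$, the set of indices $k$ with $\supp_L(\xi)\cap A''_k\neq\emptyset$ is the union of the images of two finite sets under $m\mapsto 2m$ and $m\mapsto 2m+1$, so it is finite. Hence $A\cup A'\in\mathcal J_{\rm cb}$. I would also remark that $S$ itself need not belong to $\mathcal J_{\rm cb}$. This is compatible with the paper's usage, where "ideal" means closure under subsets and finite unions.

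\emph{The inclusion.} Let $B\in\mathcal B_{\rm club}(S)$, witnessed by a club $C$ with $B\in\mathcal B_C(S)$. The main point, and the only mild obstacle, is to check that $B$ is club-small, that is, $B\in\mathcal B_C(B)$. The condition defining $\mathcal B_C(A)$ refers only to the partition $\mathcal P_C$ of $S$ and to the sets $\operatorname{cr}^{B}_{\mathcal P_C}(\xi)$. These depend on $B$ but not on the ambient set $A$, apart from the requirement $B\subseteq A$. So $B\in\mathcal B_C(S)$ gives $B\in\mathcal B_C(B)\subseteq\mathcal B_{\rm club}(B)$. Now take $A_0=B$ and $A_m=\emptyset$ for $m\ge1$. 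Each term is club-small, the union covers $B$, and for each $\xi$ at most the index $0$ can meet $\supp_L(\xi)$. Thus the sequence is support-point-finite, and $B\in\mathcal J_{\rm cb}$.
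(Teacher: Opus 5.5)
Your proof is correct and follows essentially the same route as the paper: you reuse the same cover for downward closure, merge the two countable covers for finite unions (your interleaving is the paper's ``enumerate as a countable sequence''), and pad a one-term cover with empty sets for the inclusion. Your version also spells out two steps the paper leaves implicit: that $B\in\mathcal B_C(S)$ gives $B\in\mathcal B_C(B)$, so $B$ is club-small, and that the empty set is club-small.
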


\begin{proof}
If $B\subseteq A$ and $A\in\mathcal J_{\rm cb}$, every support-point-finite cover witnessing $A\in\mathcal J_{\rm cb}$ also covers $B$.  Hence the family is downward closed.  Suppose $A,B\in\mathcal J_{\rm cb}$.  Choose support-point-finite covers
\[
        A\subseteq\bigcup_{m<\omega}A_m,
        \qquad
        B\subseteq\bigcup_{m<\omega}B_m.
\]
All $A_m$ and $B_m$ are club-small.  Enumerate the family
\[
        \{A_m:m<\omega\}\cup\{B_m:m<\omega\}
\]
as a countable sequence.  For each $\xi\in D$, the support of $\xi$ meets only finitely many of the
$A_m$ and only finitely many of the $B_m$.  It therefore meets only finitely
many sets in the combined sequence.  Thus $A\cup B\in\mathcal J_{\rm cb}$.

If $A\in\mathcal B_{\rm club}(S)$, set $A_0=A$ and $A_m=\emptyset$ for $m>0$.  This sequence is support-point-finite and consists of club-small sets, so $A\in\mathcal J_{\rm cb}$.
\end{proof}

\begin{proposition}\label{prop:Jcb-subspace-delta}
If $A\in\mathcal J_{\rm cb}$, then the restricted ladder-system space $X_A$ is a $\Delta$-space.
\end{proposition}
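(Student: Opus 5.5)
The plan is to reduce to Theorem~\ref{thm:support-finite-decomposition}, applied to the restricted ladder system $L\restriction A$. Fix a support-point-finite sequence $(A_m)_{m<\omega}$ of club-small subsets of $S$ with $A\subseteq\bigcup_m A_m$. First I will disjointify it by setting
\[
        P_m=A\cap\Bigl(A_m\setminus\bigcup_{i<m}A_i\Bigr),\qquad m<\omega .
\]
After discarding empty pieces, the sets $P_m$ form a partition $\mathcal P$ of $A$.

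Next I will check that $\mathcal P$ is support-finite for $L\restriction A$. Let $\xi\in D_A$. Its restricted support is $A\cap\supp_L(\xi)$, and this set meets $P_m$ only if $\supp_L(\xi)$ meets $A_m$. By support-point-finiteness, that happens for only finitely many $m$. Hence only finitely many pieces of $\mathcal P$ meet the support of $\xi$.

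Then I will show that each piece gives a $\Delta$-space. Each $P_m$ is a subset of the club-small set $A_m$. Club-smallness passes to subsets: if $C$ is a club witnessing $A_m\in\mathcal B_C(A_m)$, then for every $\xi\in D$,
\[
        \operatorname{cr}^{P_m}_{\mathcal P_C}(\xi)\subseteq\operatorname{cr}^{A_m}_{\mathcal P_C}(\xi),
\]
so the left-hand side is finite and $P_m\in\mathcal B_C(P_m)$. Lemma~\ref{lem:club-small-delta} therefore gives $X_{P_m}\in\Delta$. Alternatively, $X_{P_m}$ is a subspace of $X_{A_m}\in\Delta$, and subspaces of $\Delta$-spaces are $\Delta$-spaces.

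Finally, I will apply Theorem~\ref{thm:support-finite-decomposition} in its restricted form, replacing $S,D,L$ by $A,D_A,L\restriction A$. Its hypotheses are exactly the two facts just established, so it yields $X_A\in\Delta$.

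The step needing the most care is the bookkeeping of the restricted setting. The theorem must be applied on the carrier $A$ with isolated part $D_A$. Also, the sub-pieces $X_{P_m}$, formed inside $X_A$, are literally the same subspaces of $X_L$ as the generated restricted spaces, so the restricted version of the theorem applies verbatim. Neither point poses a real obstacle.
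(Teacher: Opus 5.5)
Your proposal is correct and follows the paper's proof essentially step for step. Your disjointified pieces $P_m$ coincide with the paper's. You make the same support-finiteness check and the same appeal to Lemma~\ref{lem:club-small-delta}, with explicit verification that club-smallness passes to subsets. You then conclude, as the paper does, with Theorem~\ref{thm:support-finite-decomposition} applied to the restricted system on $A$.
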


\begin{proof}
Choose a support-point-finite cover $A\subseteq\bigcup_{m<\omega}A_m$ by club-small sets.  Put
\[
        P_m=(A\cap A_m)\setminus\bigcup_{j<m}(A\cap A_j),
        \qquad m<\omega.
\]
Then $(P_m)_{m<\omega}$ partitions $A$.  Each $P_m$ is contained in the club-small set $A_m$, and club-smallness is inherited by subsets, so each $P_m$ is club-small.  By Lemma~\ref{lem:club-small-delta}, $X_{P_m}\in\Delta$ for every $m<\omega$.

The partition $(P_m)_{m<\omega}$ is support-finite for the restricted ladder system on $A$.  If $\supp_L(\xi)$ meets $P_m$, then it also meets $A_m$, which occurs for only finitely many $m$ by support-point-finiteness of the cover.  Theorem~\ref{thm:support-finite-decomposition}, applied to the restricted ladder system on $A$, gives $X_A\in\Delta$.
\end{proof}

In particular, applying Proposition~\ref{prop:Jcb-subspace-delta} with $A=S$
shows that $S\in\mathcal J_{\rm cb}$ implies $X_L\in\Delta$.

\begin{lemma}\label{lem:bounded-club-small}
Every bounded subset of $S\subseteq E^{\omegaone}_\omega$ is club-small.  Consequently, every bounded subset of $S$ belongs to $\mathcal J_{\rm cb}$.
\end{lemma}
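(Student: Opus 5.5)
Let $A\subseteq S$ be bounded, say $A\subseteq\delta$ for some countable ordinal $\delta$. The plan is to exhibit one club $C$ whose block partition $\mathcal P_C$ puts all of $A$ into a single block. Then $\operatorname{cr}^{A}_{\mathcal P_C}(\xi)$ has at most one element for every $\xi\in D$, so $A\in\mathcal B_C(A)\subseteq\mathcal B_{\rm club}(A)$, which is club-smallness.

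The natural choice is $C=\omegaone\setminus\delta$, the set of ordinals $\ge\delta$. It is closed and unbounded, and its increasing enumeration has $\gamma_0=\delta$. By the convention fixed in the construction of $\mathcal P_C$, the countable initial part $S\cap\gamma_0=S\cap\delta$ is added to the first block $I_0(C)$. Since $A\subseteq S\cap\delta$, we get $A\subseteq I_0(C)$. For each $\xi\in D$, the only block that $\supp_L(\xi)\cap A$ can meet is therefore $I_0(C)$, so the crossing set is finite.

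For the second assertion we use Lemma~\ref{lem:Jcb-ideal}, which gives $\mathcal B_{\rm club}(S)\subseteq\mathcal J_{\rm cb}$. We must check that $A\in\mathcal B_{\rm club}(S)$. This holds because the defining crossing condition refers only to $B=A$ and the partition, not to the ambient set, and the same club $C$ witnesses it. Alternatively, we can apply the definition of $\mathcal J_{\rm cb}$ directly, using the cover $A_0=A$ and $A_m=\emptyset$ for $m>0$. This sequence is trivially support-point-finite.

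The argument is short, and the only step needing care is the boundary convention. Without the convention that $S\cap\gamma_0$ is absorbed into $I_0(C)$, we would instead pick any club with $\gamma_0\le\min A$ and $\gamma_1>\sup A$, or simply use the initial-part convention as stated. In either form we rely only on that convention, not on any property of the ladders.
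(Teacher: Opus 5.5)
Your proof is correct and follows the paper's argument. Both choose a single club whose block partition places all of $A$ in one block, so every crossing set has at most one element, and then pass to $\mathcal J_{\rm cb}$ via Lemma~\ref{lem:Jcb-ideal} (your direct cover $A_0=A$, $A_m=\emptyset$ also works). The only cosmetic difference is the choice of club: the paper takes $0,\gamma\in C$ with $C\cap\gamma\subseteq\{0\}$, so that $A\subseteq[\gamma_0,\gamma_1)=[0,\gamma)$ without the initial-segment convention, whereas your club $\omegaone\setminus\delta$ uses that convention to absorb $S\cap\delta$ into $I_0(C)$.
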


\begin{proof}
Let $A\subseteq S$ be bounded, and choose $\gamma<\omegaone$ with $A\subseteq\gamma$.  Choose a club $C\subseteq\omegaone$ such that $0,\gamma\in C$ and $C\cap\gamma\subseteq\{0\}$.  Then all points of $A$ lie in a single interval of the club partition $\mathcal P_C$.  For every $\xi\in D$, the set of $C$-blocks meeting $\supp_L(\xi)\cap A$ has size at most one.  Thus $A\in\mathcal B_{\rm club}(A)$, so $A$ is club-small.  The same club $C$ shows that $A\in\mathcal B_C(S)$ and hence $A\in\mathcal B_{\rm club}(S)$.  Lemma~\ref{lem:Jcb-ideal} then gives $A\in\mathcal J_{\rm cb}$.
\end{proof}

\subsubsection{The trace-accumulation criterion}
For $B\subseteq\omegaone$, write
$\acc(B)=\{\delta<\omegaone:0<\delta=\sup(B\cap\delta)\}$.
For $A\subseteq S$, define the trace-accumulation set
\[
        \Tr_L(A)=\bigcup_{\xi\in D}\acc(W_A(\xi)),
        \qquad
        W_A(\xi)=\supp_L(\xi)\cap A.
\]
Thus $\delta\in\Tr_L(A)$ exactly when, for some isolated point $\xi$, the
points $\alpha\in A$ whose ladders contain $\xi$ are cofinal in $\delta$.

\begin{lemma}\label{lem:infinite-crossing-acc}
Let $C\subseteq\omegaone$ be club and let $W\subseteq S$.  Then $W$ meets infinitely many blocks of the club partition $\mathcal P_C$ if and only if
\[
        \acc(W)\cap\acc(C)\neq\emptyset.
\]
\end{lemma}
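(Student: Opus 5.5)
The plan is to prove both directions directly from the continuous increasing enumeration $C=\{\gamma_\eta:\eta<\omegaone\}$, using two standard facts about clubs. First, $\acc(C)$ consists exactly of the points $\gamma_{\eta^*}$ with $\eta^*$ a nonzero limit ordinal. Indeed, if $\delta\in\acc(C)$ then $\delta\in C$ by closedness. Writing $\delta=\gamma_{\eta^*}$, the index $\eta^*$ cannot be $0$ or a successor, since otherwise $C\cap\delta$ would be empty or have a maximum $\gamma_{\eta^*-1}<\delta$. Conversely, for a nonzero limit $\eta^*$, continuity gives $\gamma_{\eta^*}=\sup_{\eta<\eta^*}\gamma_\eta$. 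Second, every block $I_\eta(C)$ is contained in $[0,\gamma_{\eta+1})$, where the case $\eta=0$ uses the convention that $S\cap\gamma_0$ is added to $I_0(C)$. Moreover, any $\alpha\in I_\eta(C)$ with $\eta>0$ satisfies $\gamma_\eta\le\alpha$.

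For the forward direction, suppose $W$ meets infinitely many blocks. I will choose indices $0<\eta_0<\eta_1<\cdots$ and points $\alpha_k\in W\cap I_{\eta_k}(C)$, discarding block $0$ if necessary. Put $\eta^*=\sup_k\eta_k$. This is a countable supremum of a strictly increasing sequence, hence a nonzero limit ordinal below $\omegaone$. Set $\delta=\gamma_{\eta^*}$, so that $\delta\in\acc(C)$ by the first fact. Since $\eta^*$ is a limit, $\eta_k+1<\eta^*$, and therefore
\[
\gamma_{\eta_k}\le\alpha_k<\gamma_{\eta_k+1}<\delta .
\]
By continuity $\gamma_{\eta_k}\to\delta$, so the $\alpha_k$ form a sequence in $W\cap\delta$ that is cofinal in $\delta$. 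Hence $\delta\in\acc(W)\cap\acc(C)$.

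For the converse, take $\delta\in\acc(W)\cap\acc(C)$ and write $\delta=\gamma_{\eta^*}$ with $\eta^*$ a nonzero limit. Each $\alpha\in W\cap\delta$ lies in some block $I_\eta(C)$, and I claim that $\eta<\eta^*$. If $\alpha<\gamma_0$, then $\eta=0$. Otherwise $\gamma_\eta\le\alpha<\delta=\gamma_{\eta^*}$, which forces $\eta<\eta^*$. Each block $I_\eta(C)$ with $\eta<\eta^*$ is bounded by $\gamma_{\eta+1}<\delta$. So if only finitely many blocks met $W\cap\delta$, then $W\cap\delta$ would be bounded below $\delta$ by the maximum of finitely many ordinals $\gamma_{\eta+1}<\delta$. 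This contradicts $\sup(W\cap\delta)=\delta$, so $W$ meets infinitely many blocks.

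The main point needing care is the bookkeeping: the convention for $I_0(C)$, which is handled by discarding block $0$ in the forward direction and by the bound $\gamma_1$ in the converse, and the correct use of continuity of the enumeration. The latter identifies $\acc(C)$ with the limit-indexed values and guarantees that $\gamma_{\eta_k+1}$ stays below $\delta$. No properties of ladders or of $S$ are used beyond $W\subseteq S$.
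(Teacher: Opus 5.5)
Your proof is correct and follows the same route as the paper. In the forward direction you take the supremum of a strictly increasing sequence of block indices and use continuity of the enumeration to obtain a point of $\acc(C)\cap\acc(W)$; in the converse you use that every block below a limit point $\delta$ of $C$ is bounded below $\delta$, and your explicit handling of the convention for $I_0(C)$ and of the identification of $\acc(C)$ with the limit-indexed points of $C$ fills in details the paper leaves implicit.
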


\begin{proof}
Enumerate $C$ increasingly and continuously as $C=\{\gamma_\eta:\eta<\omegaone\}$.  Suppose first that $W$ meets infinitely many blocks.  Choose strictly increasing indices $\eta_k$ and points
\[
        x_k\in W\cap[\gamma_{\eta_k},\gamma_{\eta_k+1})
        \qquad(k<\omega).
\]
Let $\lambda=\sup_k\eta_k$ and $\delta=\gamma_\lambda$.  Then $\lambda$ is a countable limit ordinal, so $\delta\in\acc(C)$.  Since $x_k\geq\gamma_{\eta_k}$ and $\sup_k\gamma_{\eta_k}=\delta$, the set $W\cap\delta$ is cofinal in $\delta$.  Hence $\delta\in\acc(W)$.

Conversely, suppose that $\delta\in\acc(W)\cap\acc(C)$.  Since $C\cap\delta$ is cofinal in $\delta$, there are infinitely many blocks of $\mathcal P_C$ below $\delta$.  Since $W\cap\delta$ is cofinal in $\delta$, it cannot be contained in a finite union of those blocks.  Hence $W$ meets infinitely many $C$-blocks.
\end{proof}

\begin{proposition}\label{prop:trace-club-small}
Assume $S\subseteq E^{\omegaone}_\omega$.  For every $A\subseteq S$,
\[
        A\text{ is club-small}
        \quad\Longleftrightarrow\quad
        \Tr_L(A)\text{ is nonstationary in }\omegaone.
\]
\end{proposition}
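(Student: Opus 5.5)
The plan is to reduce both directions to Lemma~\ref{lem:infinite-crossing-acc}, applied with $W=W_A(\xi)$. By definition, $A$ is club-small exactly when there is a club $C$ such that for every $\xi\in D$ the set $W_A(\xi)=\supp_L(\xi)\cap A$ meets only finitely many blocks of $\mathcal P_C$. One subtlety is the convention that $S\cap\gamma_0$ is merged into the first block, whereas the lemma uses the blocks $[\gamma_\eta,\gamma_{\eta+1})$. Merging an initial countable piece changes the number of blocks met by at most one, so finiteness is unaffected. Alternatively one can take $0\in C$ throughout. By the lemma, club-smallness is therefore equivalent to
\[
\exists C\text{ club such that } \acc(W_A(\xi))\cap\acc(C)=\emptyset \text{ for all }\xi\in D,
\]
that is, to $\Tr_L(A)\cap\acc(C)=\emptyset$ for some club $C$.

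($\Rightarrow$) Given such a club $C$, the set $\acc(C)$ is itself club, since the accumulation points of a club form a club. It is disjoint from $\Tr_L(A)$, so $\Tr_L(A)$ is nonstationary.

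($\Leftarrow$) If $\Tr_L(A)$ is nonstationary, choose a club $E$ disjoint from it. I want a club $C$ with $\acc(C)\subseteq E$. Taking $C=E$ works, because $E$ is closed, so $\acc(E)\subseteq E$. Then $\acc(C)\cap\Tr_L(A)=\emptyset$, and the lemma shows that each $W_A(\xi)$ meets only finitely many $C$-blocks. Hence $A\in\mathcal B_C(A)$, and $A$ is club-small.

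The only real point of care is the block convention for the initial segment $S\cap\gamma_0$, together with checking that the lemma's "blocks" coincide with the pieces of $\mathcal P_C$ up to that merge. Everything else is the closure fact $\acc(C)\subseteq C$ and the clubness of $\acc(C)$. Nowhere is a union over the possibly uncountably many $\xi$ required to be small: $\Tr_L(A)$ is handled as a single set, so no diagonal intersection or Fodor-type argument is needed.
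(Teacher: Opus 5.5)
Your proof is correct and follows essentially the same route as the paper's: both directions apply Lemma~\ref{lem:infinite-crossing-acc} to $W_A(\xi)$, using that $\acc(C)$ is club for the forward direction and that $\acc(C)\subseteq C$ for the converse. Your check on the merged initial block $S\cap\gamma_0$ is a harmless extra detail; the paper leaves it implicit in the lemma's statement for $\mathcal P_C$.
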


\begin{proof}
Suppose first that $A$ is club-small.  Choose a club $C\subseteq\omegaone$ such that for every $\xi\in D$, the support fiber $W_A(\xi)$ meets only finitely many blocks of $\mathcal P_C$.  By Lemma~\ref{lem:infinite-crossing-acc},
\[
        \acc(W_A(\xi))\cap\acc(C)=\emptyset
        \qquad(\xi\in D).
\]
Thus $\Tr_L(A)\cap\acc(C)=\emptyset$.  Since $\acc(C)$ is club in $\omegaone$, the set $\Tr_L(A)$ is nonstationary.

Conversely, suppose that $\Tr_L(A)$ is nonstationary.  Choose a club $C\subseteq\omegaone$ disjoint from $\Tr_L(A)$.  Since $\acc(C)\subseteq C$, it is also disjoint from $\Tr_L(A)$.  If some $W_A(\xi)$ met infinitely many blocks of $\mathcal P_C$, Lemma~\ref{lem:infinite-crossing-acc} would give
\[
        \acc(W_A(\xi))\cap\acc(C)\neq\emptyset.
\]
Any point in this intersection belongs to both $C$ and $\Tr_L(A)$, contradicting $C\cap\Tr_L(A)=\emptyset$.  Thus $C$ witnesses that $A$ is club-small.
\end{proof}

\subsection{A Fodor collapse for club-smallness, traces, and support fibers}\label{sec:fodor}

Club-smallness and the trace condition depend only on the incidence relation
$\xi\in L_\alpha$, not on the positions $p_\alpha(\xi)$.  Proposition~\ref{prop:trace-club-small}
and Fodor's lemma show that both are equivalent to countability of the fibers
$W_A(\xi)=\supp_L(\xi)\cap A$.  The common-first-point example of
Proposition~\ref{prop:common-first-point-example}, constructed in
Section~\ref{sec:elementary-positive}, shows that none of these conditions is
necessary for the $\Delta$-property.  By contrast, FTM and EPB
impose explicit inequalities on $p_\alpha(\xi)$.

\begin{lemma}\label{lem:fodor-collapse}
Assume $S\subseteq E^{\omegaone}_\omega$ and let $A\subseteq S$.  The following are
equivalent.
\begin{enumerate}[label=(\roman*),leftmargin=2em]
\item $\Tr_L(A)$ is stationary in $\omegaone$.
\item There is $\xi\in D$ with $|W_A(\xi)|=\aleph_1$.
\item $\Tr_L(A)$ contains a club.
\end{enumerate}
\end{lemma}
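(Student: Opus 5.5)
I will prove the cycle (iii)$\Rightarrow$(i)$\Rightarrow$(ii)$\Rightarrow$(iii).

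\emph{(iii)$\Rightarrow$(i).} A set containing a club is stationary, so this is immediate.

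\emph{(ii)$\Rightarrow$(iii).} Let $\xi\in D$ satisfy $|W_A(\xi)|=\aleph_1$. Then $W=W_A(\xi)$ is unbounded in $\omegaone$, and by definition
\[
\acc(W)\subseteq\Tr_L(A).
\]
The set $\acc(W)$ of nonzero limit points of an unbounded subset of $\omegaone$ is club. It is closed, because a limit of accumulation points is an accumulation point. It is unbounded, by the usual $\omega$-step closing argument. Hence $\Tr_L(A)$ contains a club.

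\emph{(i)$\Rightarrow$(ii).} This is the main step, and it uses Fodor's lemma.

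Assume $\Tr_L(A)$ is stationary. For each $\delta\in\Tr_L(A)$, choose a witness $\xi_\delta\in D$ with $\delta\in\acc(W_A(\xi_\delta))$. Since $W_A(\xi_\delta)\cap\delta$ is cofinal in $\delta$, pick some $\alpha<\delta$ in $A$ with $\xi_\delta\in L_\alpha$. Ladders lie below their carrier points, so $\xi_\delta<\alpha<\delta$. Thus $\delta\mapsto\xi_\delta$ is regressive on the stationary set $\Tr_L(A)\setminus\{0\}$.

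By Fodor's lemma, there are a stationary $T\subseteq\Tr_L(A)$ and a single $\xi\in D$ with $\xi_\delta=\xi$ for all $\delta\in T$. Then every $\delta\in T$ is an accumulation point of $W_A(\xi)$. Since $T$ is unbounded, $W_A(\xi)$ is unbounded in $\omegaone$, and therefore $|W_A(\xi)|=\aleph_1$.

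The only points needing care are two small ones. First, the regressivity claim $\xi_\delta<\delta$ needs $\xi\in L_\alpha\subseteq\alpha$ for some $\alpha\in W_A(\xi_\delta)\cap\delta$. Second, $0$ must be excluded from $\Tr_L(A)$, which is automatic from the definition of $\acc$.
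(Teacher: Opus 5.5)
Your proposal is correct and follows the paper's proof essentially step for step. You use the same cycle of implications, the same observation that $\acc(W_A(\xi))$ is a club contained in $\Tr_L(A)$, and the same Fodor argument applied to the regressive map $\delta\mapsto\xi_\delta$, with regressivity coming from $\xi_\delta<\alpha<\delta$; the paper first intersects $\Tr_L(A)$ with the limit ordinals, which is harmless since members of $\acc(\cdot)$ are already nonzero limits.
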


\begin{proof}
(ii)$\Rightarrow$(iii).  Let $\xi\in D$ with $W:=W_A(\xi)$ uncountable.  An
uncountable subset of $\omegaone$ is unbounded.  The set $\acc(W)$ is closed.
If $\delta_k\in\acc(W)$ with $\delta_k\uparrow\delta$, then $W\cap\delta$ is
cofinal in $\delta$, so $\delta\in\acc(W)$.  It is unbounded.  Given
$\beta<\omegaone$, choose $w_0\in W$ with $w_0>\beta$ and $w_{k+1}\in W$ with
$w_{k+1}>w_k$.  Then $\delta=\sup_k w_k\in\acc(W)$ and $\delta>\beta$.  Thus
$\acc(W)\subseteq\Tr_L(A)$ is club.

Every club is stationary, so (iii)$\Rightarrow$(i).

(i)$\Rightarrow$(ii).  Suppose $\Tr_L(A)$ is stationary.  Then
$T=\Tr_L(A)\cap\mathrm{Lim}(\omegaone)$ is stationary.  For $\gamma\in T$ choose
$\xi_\gamma\in D$ with $\gamma\in\acc(W_A(\xi_\gamma))$.  Then
$W_A(\xi_\gamma)\cap\gamma$ is cofinal in $\gamma$, so choose
$\alpha_\gamma\in W_A(\xi_\gamma)\cap\gamma$.  Since
$\alpha_\gamma\in\supp_L(\xi_\gamma)$ and $L_{\alpha_\gamma}\subseteq\alpha_\gamma$,
$\xi_\gamma<\alpha_\gamma<\gamma$.  Thus
$\gamma\mapsto\xi_\gamma$ is regressive.  By Fodor's lemma, there are a
stationary set $T^\ast\subseteq T$ and a fixed $\xi^\ast\in D$ such that
$\xi_\gamma=\xi^\ast$ for every $\gamma\in T^\ast$.  Hence $\acc(W_A(\xi^\ast))$ contains the stationary set $T^\ast$ and is
therefore stationary.  Moreover, $W_A(\xi^\ast)$ is unbounded and uncountable.
\end{proof}

\begin{corollary}\label{cor:trichotomy}
For $A\subseteq S\subseteq E^{\omegaone}_\omega$, the following are equivalent.
\textup{(i)} $A$ is club-small, \textup{(ii)} $\Tr_L(A)$ is nonstationary,
\textup{(iii)} $W_A(\xi)$ is countable for every $\xi\in D$, \textup{(iv)}
$W_A(\xi)$ is bounded for every $\xi\in D$.
\end{corollary}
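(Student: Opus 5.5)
The plan is to chain the equivalences through the two preceding results, using Proposition~\ref{prop:trace-club-small} for (i)$\Leftrightarrow$(ii) and Lemma~\ref{lem:fodor-collapse} for (ii)$\Leftrightarrow$(iii). Condition (iv) will then be tied to (iii) by the standard fact that a subset of $\omegaone$ is countable exactly when it is bounded.

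The argument runs in three steps.

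\textbf{Step one.} The equivalence (i)$\Leftrightarrow$(ii) is literally Proposition~\ref{prop:trace-club-small} applied to $A$, so nothing further is needed here.

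\textbf{Step two.} For (ii)$\Leftrightarrow$(iii), I negate both sides of (i)$\Leftrightarrow$(ii) of Lemma~\ref{lem:fodor-collapse}. The set $\Tr_L(A)$ is nonstationary exactly when no $\xi\in D$ has $|W_A(\xi)|=\aleph_1$. Since $W_A(\xi)\subseteq\omegaone$, its cardinality is at most $\aleph_1$. Hence ``not of size $\aleph_1$'' means ``countable'', and this gives (iii).

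\textbf{Step three.} For (iii)$\Leftrightarrow$(iv), I argue each direction separately.
\begin{itemize}
\item If $W_A(\xi)$ is countable, its supremum is a countable union of countable ordinals. Since $\omegaone$ is regular, that supremum is $<\omegaone$, so $W_A(\xi)$ is bounded.
\item If $W_A(\xi)\subseteq\gamma$ for some $\gamma<\omegaone$, then $W_A(\xi)$ is countable because $\gamma$ is countable.
\end{itemize}
This is applied pointwise for each $\xi\in D$.

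I expect no real obstacle, since the substantive work (Fodor's lemma) is already packaged in Lemma~\ref{lem:fodor-collapse}. The only point needing care is Step two, where the cardinality dichotomy $|W_A(\xi)|\le\aleph_1$ is used to translate ``not of size $\aleph_1$'' into ``countable''.
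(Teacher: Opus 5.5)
Your proposal is correct and matches the paper's proof step for step. You get (i)$\Leftrightarrow$(ii) from Proposition~\ref{prop:trace-club-small}, (ii)$\Leftrightarrow$(iii) by negating Lemma~\ref{lem:fodor-collapse}, and (iii)$\Leftrightarrow$(iv) from the fact that a subset of $\omegaone$ is countable exactly when it is bounded; your remark that $|W_A(\xi)|\le\aleph_1$ simply spells out why ``not of size $\aleph_1$'' means ``countable'', which the paper leaves implicit.
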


\begin{proof}
(i)$\Leftrightarrow$(ii) is Proposition~\ref{prop:trace-club-small}.

Lemma~\ref{lem:fodor-collapse} says that $\Tr_L(A)$ is stationary if and only if some fiber $W_A(\xi)$ is uncountable.  Negating this statement gives (ii)$\Leftrightarrow$(iii).

Finally, (iii)$\Leftrightarrow$(iv) holds because a subset of $\omegaone$ is
countable if and only if bounded.
\end{proof}

\begin{corollary}\label{cor:sigma-ideal}
Assume $S\subseteq E^{\omegaone}_\omega$.  Then
\[
        \mathcal J_{\rm cb}=\mathcal B_{\rm club}(S)
        =\{A\subseteq S:|W_A(\xi)|\le\aleph_0\ \text{for every }\xi\in D\}.
\]
This family is closed under countable unions.
\end{corollary}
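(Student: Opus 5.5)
I will prove the two displayed equalities by showing a cycle of inclusions among $\mathcal J_{\rm cb}$, $\mathcal B_{\rm club}(S)$, and the countable-fiber family
\[
\mathcal W=\{A\subseteq S:|W_A(\xi)|\le\aleph_0\ \text{for every }\xi\in D\}.
\]
The inclusion $\mathcal B_{\rm club}(S)\subseteq\mathcal J_{\rm cb}$ is already part of Lemma~\ref{lem:Jcb-ideal}.

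\textbf{First inclusion.} To see $\mathcal W\subseteq\mathcal B_{\rm club}(S)$, take $A\in\mathcal W$. By Corollary~\ref{cor:trichotomy}, (iii)$\Rightarrow$(i), $A$ is club-small, so some club $C$ has $A\in\mathcal B_C(A)$. The defining condition of $\mathcal B_{\mathcal P}(A)$ refers only to $B$ and to the partition $\mathcal P_C$ of $S$, not to the ambient set $A$. Hence the same club gives $A\in\mathcal B_C(S)\subseteq\mathcal B_{\rm club}(S)$.

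\textbf{Second inclusion.} For $\mathcal J_{\rm cb}\subseteq\mathcal W$, take $A\subseteq\bigcup_m A_m$ with each $A_m$ club-small and the sequence support-point-finite. Fix $\xi\in D$. By Corollary~\ref{cor:trichotomy}, (i)$\Rightarrow$(iii), each $W_{A_m}(\xi)$ is countable. Moreover,
\[
W_A(\xi)\subseteq\bigcup_m W_{A_m}(\xi),
\]
which is a countable union of countable sets and hence countable. Support-point-finiteness is not even needed here, since only finitely many terms are nonempty anyway. This closes the cycle and gives all three equalities.

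\textbf{Countable unions.} Closure under countable unions is checked on the description $\mathcal W$. If $A_k\in\mathcal W$ for $k<\omega$, then for every $\xi\in D$,
\[
W_{\bigcup_k A_k}(\xi)=\bigcup_k W_{A_k}(\xi),
\]
which is countable. Together with downward closure (immediate for $\mathcal W$) and the ideal property from Lemma~\ref{lem:Jcb-ideal}, this makes the family a $\sigma$-ideal.

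The only delicate point is the mismatch between $\mathcal B_{\rm club}(A)$ in the definition of club-smallness and $\mathcal B_{\rm club}(S)$. As noted in the first inclusion, this is resolved because membership of $B$ in $\mathcal B_C(\cdot)$ does not depend on the ambient set. Everything else is a direct application of the Fodor collapse in Lemma~\ref{lem:fodor-collapse}, packaged as Corollary~\ref{cor:trichotomy}.
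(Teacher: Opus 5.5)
Your proof is correct and follows the paper's own argument. Corollary~\ref{cor:trichotomy} identifies club-small sets with countable-fiber sets, Lemma~\ref{lem:Jcb-ideal} gives $\mathcal B_{\rm club}(S)\subseteq\mathcal J_{\rm cb}$, and $W_A(\xi)\subseteq\bigcup_m W_{A_m}(\xi)$ closes the cycle; your explicit remark that membership in $\mathcal B_C(\cdot)$ does not depend on the ambient set fills in a step the paper leaves implicit. One wording nit: support-point-finiteness is dispensable because a countable union of countable sets is countable, not because only finitely many terms are nonempty, since that finiteness is exactly what the hypothesis provides.
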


\begin{proof}
By Corollary~\ref{cor:trichotomy}, a set $A$ is club-small if and only if
every fiber $W_A(\xi)$ is countable.  Hence $\mathcal B_{\rm club}(S)$ is exactly the family of sets $A\subseteq S$ for which every fiber $W_A(\xi)$ is countable.  That family is closed under countable unions.  If each $A_m$ belongs to it, then
$W_{\bigcup_m A_m}(\xi)=\bigcup_m W_{A_m}(\xi)$ is countable for every $\xi\in D$.
For the first equality, the inclusion $\mathcal B_{\rm club}(S)\subseteq\mathcal J_{\rm cb}$ follows from
Lemma~\ref{lem:Jcb-ideal}.  Conversely, suppose that $A\in\mathcal J_{\rm cb}$, and let $(A_m)_{m<\omega}$ be a support-point-finite cover of $A$ by club-small sets.  Each $W_{A_m}(\xi)$ is countable, and
$W_A(\xi)\subseteq\bigcup_m W_{A_m}(\xi)$.  Hence every fiber $W_A(\xi)$ is countable,
so $A$ is club-small.
\end{proof}

Corollary~\ref{cor:sigma-ideal} shows that the support-point-finiteness clause
in the definition of $\mathcal J_{\rm cb}$ is redundant and that
$\mathcal J_{\rm cb}$ is a $\sigma$-ideal.  Corollary~\ref{cor:trichotomy}\,(iv)$\Rightarrow$(i)
shows that if every support
fiber $W_A(\xi)$ is bounded, then one club witnesses club-smallness
simultaneously for all $\xi\in D$.

\subsection{Position-based sufficient conditions and obstructions}\label{sec:elementary-positive}

Finite support controls incidence directly.  FTM and EPB generalize the position
bounds in the $(\text{limit})+n$ ladder systems of
\cite[Example~5.2]{LS}, replacing that fixed shape by carrier-dependent
thresholds and point-dependent finite offsets.

\begin{proposition}\label{prop:finite-support}
If $\supp_L(\xi)$ is finite for every $\xi\in D$, then $X_L$ is a $\Delta$-space.
\end{proposition}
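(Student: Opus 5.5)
The plan is to verify the eventual active diagonalizability of Proposition~\ref{prop:active-normal-form}(iv) directly, using thresholds $f_n$ that are independent of the stage. Let $C_0\supseteq C_1\supseteq\cdots$ be an active ladder sequence on $S$. For every $\alpha\in S$ put $f_n(\alpha)=0$ for each $n$ with $\alpha\in C_n$. The inequality $p_\alpha(\xi)<0$ can never hold, so the whole burden falls on the stage condition. We therefore need that for every $\xi\in D$ there is $n_\xi$ with $C_n\cap\supp_L(\xi)=\emptyset$ for all $n\ge n_\xi$.

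Fix $\xi\in D$. The set $\supp_L(\xi)$ is finite by hypothesis. Since $\bigcap_n C_n=\emptyset$, each $\alpha\in\supp_L(\xi)$ has a stage $m_\alpha$ with $\alpha\notin C_{m_\alpha}$. Because the sequence is decreasing, $\alpha\notin C_n$ for every $n\ge m_\alpha$. Set $n_\xi=\max(\{0\}\cup\{m_\alpha:\alpha\in\supp_L(\xi)\})$, which exists because the support is finite. For $n\ge n_\xi$ we then get $C_n\cap\supp_L(\xi)=\emptyset$, so the active inequality holds vacuously at every such stage. Proposition~\ref{prop:active-normal-form} then yields $X_L\in\Delta$.

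An alternative route is Theorem~\ref{thm:support-finite-decomposition} applied to the partition of $S$ into singletons. This partition is support-finite exactly when every support is finite, and each $X_{\{\alpha\}}$ is countable, hence a $\Delta$-space by the countable $T_1$ argument. I would present the direct argument and mention this as a remark.

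I expect no real obstacle. The only point needing care is that finiteness of the support is what allows a single maximal stage $n_\xi$; with infinite supports the stages $m_\alpha$ could be unbounded, and the vacuous thresholds would fail.
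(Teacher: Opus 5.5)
Your proof is correct and is essentially the paper's argument: the paper also takes the constant thresholds $f_n\equiv 0$ and uses finiteness of $\supp_L(\xi)$ to get a single stage past which $C_n\cap\supp_L(\xi)=\emptyset$. The only difference is that the paper works with evanescent pairs via Proposition~\ref{prop:tail-equivalence} and chooses that stage also beyond the point where $\xi\notin B_n$, whereas you go through condition (iv) of Proposition~\ref{prop:active-normal-form} and let its (iv)$\Rightarrow$(ii) step absorb the sets $B_n$.
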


\begin{proof}
Let $(B_n,C_n)_{n<\omega}$ be an evanescent ladder pair.  Set $f_n(\alpha)=0$ for every $n<\omega$ and every $\alpha\in C_n$.

Fix $\xi\in D$.  Since $(B_n,C_n)_{n<\omega}$ is evanescent, there is $n_0$ such that
$\xi\notin B_n$ for all $n\geq n_0$.  For each $\alpha\in\supp_L(\xi)$, there is $n_\alpha$ such that $\alpha\notin C_n$ for all $n\geq n_\alpha$.  Since $\supp_L(\xi)$ is finite, choose
\[
        n\geq n_0
        \quad\text{and}\quad
        n\geq n_\alpha\text{ for all }\alpha\in\supp_L(\xi).
\]
Then $\xi\notin B_n$ and $C_n\cap\supp_L(\xi)=\emptyset$.  Thus the functions $f_n\equiv0$ form a tail diagonalization.  Proposition~\ref{prop:tail-equivalence} gives $X_L\in\Delta$.

\end{proof}

Finite tail multiplicity is the analogous condition after tail thinning,
and its contrapositive gives a necessary condition for any counterexample.

\begin{definition}
A ladder system $L$ has finite tail multiplicity if there is a function
\[
        h:S\longrightarrow \omega
\]
such that for every $\xi\in D$ the set
\[
        \{\alpha\in \supp_L(\xi): p_\alpha(\xi)\geq h(\alpha)\}
\]
is finite.
After deleting the first $h(\alpha)$ points from each ladder $L_\alpha$, the condition says that every isolated point lies on only finitely many remaining ladders.
\end{definition}

\begin{proposition}\label{prop:finite-tail}
If $L$ has finite tail multiplicity, then $X_L$ is a $\Delta$-space.
\end{proposition}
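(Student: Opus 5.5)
The plan is to reduce to Proposition~\ref{prop:finite-support} via the tail-thinning invariance of Corollary~\ref{cor:tail-thinning-same-topology}. Fix $h:S\to\omega$ witnessing finite tail multiplicity, and pass to the thinned system $L^h$ with $L^h_\alpha=\{\lambda_\alpha(n):h(\alpha)\le n<\omega\}$. Each $L^h_\alpha$ is still strictly increasing, cofinal in $\alpha$, and contained in $D$, so $L^h$ is a ladder system on $S$. By Corollary~\ref{cor:tail-thinning-same-topology}, $X_{L^h}=X_L$ as topological spaces. It therefore suffices to show $X_{L^h}\in\Delta$.

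Next I will compute supports in the thinned system. For $\xi\in D$,
\[
        \supp_{L^h}(\xi)=\{\alpha\in S:\xi\in L^h_\alpha\}
        =\{\alpha\in\supp_L(\xi):p_\alpha(\xi)\ge h(\alpha)\},
\]
because $\xi\in L^h_\alpha$ exactly when $\xi=\lambda_\alpha(n)$ for some $n\ge h(\alpha)$, that is, when $\xi\in L_\alpha$ and $p_\alpha(\xi)\ge h(\alpha)$. By the hypothesis on $h$, this set is finite for every $\xi\in D$. Proposition~\ref{prop:finite-support}, applied to $L^h$, then gives $X_{L^h}\in\Delta$, and hence $X_L\in\Delta$.

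There is a small bookkeeping point here. Proposition~\ref{prop:finite-support} refers to positions and supports in its own ladder system, and positions shift under thinning ($p^{L^h}_\alpha(\xi)=p_\alpha(\xi)-h(\alpha)$). Its proof, however, uses only finiteness of supports, so the shift does not matter.

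As an alternative that avoids changing presentations, one can argue directly with Proposition~\ref{prop:tail-equivalence}. Given an evanescent pair, take $f_n=h\restriction C_n$. For $\xi\in D$, choose $n$ past the finitely many stages needed to expel $\xi$ from $B_n$ and each of the finitely many $\alpha\in\supp_L(\xi)$ with $p_\alpha(\xi)\ge h(\alpha)$ from $C_n$. Then every $\alpha\in C_n\cap\supp_L(\xi)$ satisfies $p_\alpha(\xi)<h(\alpha)$. I do not expect a real obstacle; the only care needed is checking that $L^h$ is a legitimate ladder system and correctly identifying its supports.
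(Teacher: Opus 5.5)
Your proposal is correct and follows the paper's proof: thin each ladder by $h$ using Corollary~\ref{cor:tail-thinning-same-topology}, note that the thinned supports are the finite sets $\{\alpha\in\supp_L(\xi):p_\alpha(\xi)\ge h(\alpha)\}$, and apply Proposition~\ref{prop:finite-support}. Your alternative direct tail diagonalization with $f_n=h\restriction C_n$ is also valid; it is the same argument written out without changing the presentation.
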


\begin{proof}
Let $h:S\to\omega$ witness finite tail multiplicity.  Replace each $L_\alpha$ by the tail obtained after deleting its first $h(\alpha)$ points.  By Corollary~\ref{cor:tail-thinning-same-topology}, this does not change the associated topology.  In the resulting ladder system every isolated point lies on only finitely many ladders.  Proposition~\ref{prop:finite-support} applies.
\end{proof}

\begin{corollary}\label{cor:non-delta-tail}
If $X_L\notin\Delta$, then for every function $h:S\to\omega$ there is an isolated point $\xi\in D$ such that
\[
        \{\alpha\in \supp_L(\xi):p_\alpha(\xi)\geq h(\alpha)\}
\]
is infinite.
\end{corollary}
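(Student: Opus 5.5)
This is the contrapositive of Proposition~\ref{prop:finite-tail}, and the proof is a short contradiction argument. Assume $X_L\notin\Delta$ and suppose, toward a contradiction, that some $h:S\to\omega$ makes the set
\[
        \{\alpha\in\supp_L(\xi):p_\alpha(\xi)\ge h(\alpha)\}
\]
finite for every $\xi\in D$.

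By definition, such an $h$ witnesses that $L$ has finite tail multiplicity. Proposition~\ref{prop:finite-tail} then gives $X_L\in\Delta$, which contradicts the hypothesis. Hence for every $h$ there is some $\xi\in D$ for which the displayed set is infinite.

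If one wants the argument to be self-contained, the chain behind Proposition~\ref{prop:finite-tail} runs as follows:

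1. Thin each ladder to $L^h$. By Corollary~\ref{cor:tail-thinning-same-topology} this leaves the topology of $X_L$ unchanged.
2. Observe that $\supp_{L^h}(\xi)$ is exactly the displayed set, so every support in $L^h$ is finite.
3. Apply Proposition~\ref{prop:finite-support} to $L^h$, using the zero thresholds $f_n\equiv 0$, to conclude that $X_L\in\Delta$.

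There is no real obstacle. The only point needing care is the identification in step 2: $\xi$ lies on $L^h_\alpha$ exactly when $\xi\in L_\alpha$ and $p_\alpha(\xi)\ge h(\alpha)$. This holds because the thinned ladder $L^h_\alpha$ is the tail $\{\lambda_\alpha(n):n\ge h(\alpha)\}$.
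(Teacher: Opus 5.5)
Your proposal is correct and follows the paper's proof: the paper also obtains this corollary as the contrapositive of Proposition~\ref{prop:finite-tail}. Your optional unpacking (thin to $L^h$, identify $\supp_{L^h}(\xi)$ with the displayed set, then apply Proposition~\ref{prop:finite-support}) is the same route the paper uses to prove that proposition.
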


\begin{proof}
This is the contrapositive of Proposition~\ref{prop:finite-tail}.
\end{proof}

Finite tail multiplicity admits the following weakening.  An isolated point may remain on
infinitely many thinned ladders, provided its positions are eventually bounded
up to a finite offset depending on that point.

\begin{definition}
The ladder system $L$ has \emph{eventual position-bounded tails} (EPB) if there is a function
\[
        h:S\longrightarrow\omega
\]
such that for every $\xi\in D$ there is $q_\xi<\omega$ for which
\[
        \{\alpha\in\supp_L(\xi):p_\alpha(\xi)\geq h(\alpha)+q_\xi\}
\]
is finite.
\end{definition}

\begin{proposition}\label{prop:position-bounded}
If $L$ has eventual position-bounded tails, then $X_L$ is a $\Delta$-space.
\end{proposition}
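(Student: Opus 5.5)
We prove the statement through Proposition~\ref{prop:active-normal-form}: it suffices to show that every active ladder sequence $(C_n)_{n<\omega}$ is eventually actively diagonalizable.  Fix $h:S\to\omega$ witnessing EPB, and for each $\xi\in D$ fix the offset $q_\xi$.  The natural choice is to let the thresholds grow with the stage:
\[
        f_n(\alpha)=h(\alpha)+n,\qquad \alpha\in C_n .
\]
The idea is that the unknown offset $q_\xi$ is absorbed by going to a late enough stage, while the finitely many exceptional ladders are removed by the evanescence of $(C_n)$.

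Fix $\xi\in D$ and put
\[
        E_\xi=\{\alpha\in\supp_L(\xi):p_\alpha(\xi)\geq h(\alpha)+q_\xi\},
\]
which is finite by EPB.  Since $\bigcap_n C_n=\emptyset$ and the $C_n$ decrease, each $\alpha\in E_\xi$ has some $m_\alpha$ with $\alpha\notin C_n$ for all $n\geq m_\alpha$.  Set
\[
        n_\xi=\max\bigl(\{q_\xi\}\cup\{m_\alpha:\alpha\in E_\xi\}\bigr),
\]
which is finite because $E_\xi$ is finite.

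Now let $n\geq n_\xi$ and $\alpha\in C_n\cap\supp_L(\xi)$.  Then $\alpha\notin E_\xi$, since $n\ge m_\alpha$ for every $\alpha\in E_\xi$.  Hence
\[
        p_\alpha(\xi)<h(\alpha)+q_\xi\leq h(\alpha)+n=f_n(\alpha).
\]
This is exactly the eventual active inequality at $\xi$.  So condition (iv) of Proposition~\ref{prop:active-normal-form} holds, and therefore $X_L\in\Delta$.

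No step is a real obstacle.  The only point needing care is the choice $f_n=h+n$ rather than $f_n=h$: a stage-independent threshold would not handle the point-dependent offsets $q_\xi$.  One should also note that $n_\xi$ is finite precisely because $E_\xi$ is finite, which is where EPB is used.  As a check, taking $q_\xi=0$ recovers Proposition~\ref{prop:finite-tail}.
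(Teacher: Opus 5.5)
Your proof is correct and is essentially the paper's argument: the paper also uses thresholds that grow with the stage, namely $f_n(\alpha)=h(\alpha)+n+1$, so that late stages absorb the offset $q_\xi$ while evanescence removes the finitely many exceptional ladders. The only difference is that the paper checks the tail-diagonalization condition for evanescent pairs $(B_n,C_n)$ directly and then applies Proposition~\ref{prop:tail-equivalence}, whereas you verify condition (iv) of Proposition~\ref{prop:active-normal-form}, which leaves the handling of the sets $B_n$ to that proposition.
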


\begin{proof}
Let $h:S\to\omega$ witness eventual position-boundedness, and let $(B_n,C_n)_{n<\omega}$ be an evanescent ladder pair.  For $\alpha\in C_n$ put
\[
        f_n(\alpha)=h(\alpha)+n+1.
\]
It remains to verify that $(f_n)_{n<\omega}$ is a tail diagonalization.

Fix $\xi\in D$.  Choose $q<\omega$ such that
\[
        F=\{\alpha\in\supp_L(\xi):p_\alpha(\xi)\geq h(\alpha)+q\}
\]
is finite.  Because the pair is evanescent and both $(B_n)$ and $(C_n)$ are decreasing, $\xi$ is eventually outside $B_n$, and every member of the finite set $F$ is eventually outside $C_n$.  Choose $n<\omega$ such that
$n\geq q$, $\xi\notin B_n$, and $F\cap C_n=\emptyset$.  If $\alpha\in C_n\cap\supp_L(\xi)$, then $\alpha\notin F$, and hence
\[
        p_\alpha(\xi)<h(\alpha)+q\leq h(\alpha)+n<f_n(\alpha).
\]
Thus $(f_n)_{n<\omega}$ satisfies the tail-diagonalization condition at $\xi$.  Proposition~\ref{prop:tail-equivalence} gives $X_L\in\Delta$.
\end{proof}

\begin{corollary}\label{cor:non-delta-unbounded-positions}
If $X_L\notin\Delta$, then for every $h:S\to\omega$ there is $\xi\in D$ such that, for every $q<\omega$, the set
\[
        \{\alpha\in\supp_L(\xi):p_\alpha(\xi)\geq h(\alpha)+q\}
\]
is infinite.
\end{corollary}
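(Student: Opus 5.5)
This is the contrapositive of Proposition~\ref{prop:position-bounded}, in the same way that Corollary~\ref{cor:non-delta-tail} is the contrapositive of Proposition~\ref{prop:finite-tail}. The plan is to assume $X_L\notin\Delta$, fix an arbitrary $h:S\to\omega$, and suppose toward a contradiction that no $\xi\in D$ has the stated property.

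Under that assumption, each $\xi\in D$ has some $q_\xi<\omega$ for which the set
\[
        \{\alpha\in\supp_L(\xi):p_\alpha(\xi)\geq h(\alpha)+q_\xi\}
\]
is finite. Choosing one such $q_\xi$ for every $\xi\in D$ shows that $h$ witnesses eventual position-bounded tails. Proposition~\ref{prop:position-bounded} then gives $X_L\in\Delta$, contradicting the hypothesis. Hence for every $h$ some $\xi$ has the set infinite for every $q<\omega$.

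The only point to check is the quantifier order. In the definition of EPB, the single function $h$ comes first and the offset $q_\xi$ may depend on $\xi$. The negation of ``$h$ witnesses EPB'' is therefore exactly ``there is $\xi$ such that for all $q$ the set is infinite,'' which is the required conclusion for that $h$. Because $h$ was arbitrary, the statement holds for every $h:S\to\omega$. There is no real obstacle beyond this bookkeeping.
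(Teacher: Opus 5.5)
Your proposal is correct and is exactly the paper's argument: the corollary is the contrapositive of Proposition~\ref{prop:position-bounded}. Your quantifier check is right. For a fixed $h$, ``$h$ fails to witness EPB'' is precisely the existence of a $\xi$ for which every $q<\omega$ leaves the set infinite.
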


\begin{proof}
This is the contrapositive of Proposition~\ref{prop:position-bounded}.
\end{proof}

The positive conditions form the chain
\[
\begin{gathered}
\text{finite support}\Longrightarrow
\text{finite tail multiplicity}\Longrightarrow
\text{eventual position-bounded tails}\Longrightarrow X_L\in\Delta.
\end{gathered}
\]
For the first implication, take $h\equiv0$.  For the second, take $q_\xi=0$ in the definition of eventual position-boundedness.  The last implication is Proposition~\ref{prop:position-bounded}.  No converse implication is asserted.

A common-first-point construction gives a stationary-carrier example with
eventual position-boundedness and $\sigma$-closed discreteness.  It shows that the
equivalent conditions in Corollaries~\ref{cor:trichotomy}
and~\ref{cor:sigma-ideal} are not necessary for the $\Delta$-property.  Apart
from the added common first point, the tails have the shape used in
\cite[Example~5.2]{LS}.

\begin{definition}\label{def:common-first-point-example}
Fix a stationary set
\[
        S\subseteq E^{\omegaone}_\omega\cap\operatorname{acc}(\mathrm{Lim}(\omegaone))
\]
whose members are all above $\omega+1$.  Here $\mathrm{Lim}(\omegaone)$
denotes the set of limit ordinals below $\omegaone$.  The set
$\operatorname{acc}(\mathrm{Lim}(\omegaone))$ is club in $\omegaone$.
Deleting a bounded initial segment preserves stationarity, so such an $S$
exists.  For each
$\alpha\in S$ fix a strictly increasing sequence
$\langle\gamma_\alpha(n):n<\omega\rangle$ of limit ordinals cofinal in
$\alpha$ with $\gamma_\alpha(0)>1$, and define
$L_\alpha=\{\lambda_\alpha(n):n<\omega\}$ by
\[
        \lambda_\alpha(0)=1,\qquad
        \lambda_\alpha(n+1)=\gamma_\alpha(n)+(n+1).
\]
Let $L=\{L_\alpha:\alpha\in S\}$.
\end{definition}

Removing the common first point $1$ leaves tails of the form considered in
\cite[Example~5.2]{LS}.  The system in that example is $\sigma$-closed discrete and hence a
$\Delta$-space.  By Corollary~\ref{cor:tail-thinning-same-topology}, this
removal does not change the associated topology.

\begin{proposition}\label{prop:common-first-point-example}
The ladder system $L$ of Definition~\ref{def:common-first-point-example} has the following properties.
\begin{enumerate}[label=(\roman*),leftmargin=2em]
\item $L$ is a ladder system.  Each $L_\alpha$ is strictly increasing, cofinal
in $\alpha$, and contained in $D$.
\item For every $\xi\in D$ and $\alpha\in\supp_L(\xi)$, $p_\alpha(\xi)\le k_\xi$,
where $k_\xi$ is the finite part in the representation
$\xi=\mu+k_\xi$, with $\mu$ a limit ordinal or $0$.  Hence $L$ has eventual position-bounded tails, $X_L\in\Delta$,
and $X_L$ is $\sigma$-closed discrete.
\item $\supp_L(1)=S$, so $W_A(1)=A$ for every $A\subseteq S$, and the club-small
subsets of $S$ are exactly the countable subsets.  In particular,
$\mathcal J_{\rm cb}=[S]^{\le\omega}$ and $S\notin\mathcal J_{\rm cb}$.
\end{enumerate}
\end{proposition}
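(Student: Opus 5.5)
We verify the three items in order; each reduces to ordinal arithmetic on the ladder points $\lambda_\alpha(n)$ and to earlier results of the paper.

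For (i), the sequence is strictly increasing. Indeed $\lambda_\alpha(0)=1<\gamma_\alpha(0)+1$ because $\gamma_\alpha(0)>1$. Moreover $\gamma_\alpha(n)+(n+1)<\gamma_\alpha(n+1)\le\gamma_\alpha(n+1)+(n+2)$, since $\gamma_\alpha(n+1)$ is a limit ordinal exceeding $\gamma_\alpha(n)$. The sequence is cofinal in $\alpha$ because $\gamma_\alpha(n)\le\lambda_\alpha(n+1)<\gamma_\alpha(n+1)<\alpha$. It remains to show $L_\alpha\subseteq D$. Every member of $S$ is a limit ordinal (of countable cofinality), whereas $1$ and every $\gamma+(n+1)$ are successor ordinals. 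Hence no ladder point lies in $S$.

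For (ii), write $\xi=\mu+k_\xi$ with $\mu$ a limit or $0$; this representation is unique. If $\xi=\lambda_\alpha(0)=1$, then $p_\alpha(\xi)=0\le k_\xi=1$. If $\xi=\lambda_\alpha(n+1)=\gamma_\alpha(n)+(n+1)$, then uniqueness gives $k_\xi=n+1=p_\alpha(\xi)$. So $p_\alpha(\xi)\le k_\xi$ in all cases. Now take $h\equiv0$ and $q_\xi=k_\xi+1$. Then $\{\alpha\in\supp_L(\xi):p_\alpha(\xi)\ge q_\xi\}$ is empty, so EPB holds, and Proposition~\ref{prop:position-bounded} gives $X_L\in\Delta$. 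For $\sigma$-closed discreteness I would use Lemma~\ref{lem:sigma-cd-coloring} with the coloring $c(\xi)=k_\xi$. On $L_\alpha$, the value $n+1$ is taken only at $\lambda_\alpha(n+1)$, and the value $1$ is taken at $\lambda_\alpha(0)$ and $\lambda_\alpha(1)$. So each fiber of $c\restriction L_\alpha$ has at most two points, and $c\restriction L_\alpha$ is finite-to-one. This avoids any appeal to \cite{LS}.

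For (iii), $1=\lambda_\alpha(0)\in L_\alpha$ for every $\alpha\in S$, so $\supp_L(1)=S$ and $W_A(1)=A$. By Corollary~\ref{cor:trichotomy}, $A$ is club-small if and only if every fiber $W_A(\xi)$ is countable.
\begin{itemize}
\item If $A$ is club-small, then in particular $W_A(1)=A$ is countable.
\item If $A$ is countable, then every $W_A(\xi)\subseteq A$ is countable, so $A$ is club-small.
\end{itemize}
Corollary~\ref{cor:sigma-ideal} then identifies $\mathcal J_{\rm cb}$ with $\{A\subseteq S:\text{all }W_A(\xi)\text{ countable}\}=[S]^{\le\omega}$. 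Finally, $S$ is stationary and hence uncountable, so $S\notin\mathcal J_{\rm cb}$.

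The only point needing care is the use of unique Cantor-normal-form-style decomposition in (ii). It also relies on $\gamma_\alpha(n)$ being a limit ordinal, as the definition requires. Everything else is direct.
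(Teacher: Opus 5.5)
Your proposal is correct and follows the paper's proof essentially step for step. It uses the same successor-ordinal argument for (i), the same finite-part bound with $h\equiv0$, $q_\xi=k_\xi+1$, and the same coloring $\xi\mapsto k_\xi$ for (ii), and the same observation $W_A(1)=A$ combined with Corollaries~\ref{cor:trichotomy} and~\ref{cor:sigma-ideal} for (iii). The only difference is cosmetic: for ``countable $\Rightarrow$ club-small'' you invoke Corollary~\ref{cor:trichotomy}\,(iii)$\Rightarrow$(i) where the paper cites Lemma~\ref{lem:bounded-club-small}, and both work.
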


\begin{proof}
(i) The sequence $(\lambda_\alpha(n))_{n<\omega}$ is strictly increasing.
$\lambda_\alpha(1)=\gamma_\alpha(0)+1>1$, and
$\lambda_\alpha(n+2)>\lambda_\alpha(n+1)$ because
$\gamma_\alpha(n+1)>\gamma_\alpha(n)$.  It is cofinal in $\alpha$, since
$\lambda_\alpha(n+1)\ge\gamma_\alpha(n)$.  Every term is a successor ordinal,
hence has cofinality different from $\omega$ and lies in
$D=\omegaone\setminus S$.

(ii) If $\xi=\lambda_\alpha(0)=1$ then $k_\xi=1$ and $p_\alpha(\xi)=0$.  If
$\xi=\lambda_\alpha(n+1)=\gamma_\alpha(n)+(n+1)$ with $\gamma_\alpha(n)$ a limit,
then uniqueness of the representation $\xi=\mu+k_\xi$ gives
$\mu=\gamma_\alpha(n)$ and $k_\xi=n+1=p_\alpha(\xi)$.  Either way
$p_\alpha(\xi)\le k_\xi$.  Taking $h\equiv0$ and $q_\xi=k_\xi+1$, the set
$\{\alpha\in\supp_L(\xi):p_\alpha(\xi)\ge h(\alpha)+q_\xi\}$ is empty, so $L$ has
eventual position-bounded tails and $X_L\in\Delta$ by
Proposition~\ref{prop:position-bounded}.  For $\sigma$-closed discreteness,
define $c:D\to\omega$ by sending each ordinal $\xi$ to its finite part
$k_\xi$ in the representation $\xi=\mu+k_\xi$ with $\mu$ a limit ordinal or
$0$.  Along $L_\alpha$, the value $1$ occurs at most twice, and each value
$k\ge2$ occurs at most once.  Hence $c\restriction L_\alpha$ is finite-to-one
for every $\alpha\in S$.  Lemma~\ref{lem:sigma-cd-coloring} shows that $X_L$ is $\sigma$-closed
discrete.

(iii) Since $\lambda_\alpha(0)=1$ for all $\alpha\in S$, $\supp_L(1)=S$ and
$W_A(1)=A$.  By Corollary~\ref{cor:trichotomy}, if $A$ is club-small, then
every fiber is countable.  In particular, $W_A(1)=A$ is countable.  The converse
follows from Lemma~\ref{lem:bounded-club-small}, since countable
subsets of $\omegaone$ are bounded.
Hence the club-small subsets of $S$ are the countable ones, and by
Corollary~\ref{cor:sigma-ideal} $\mathcal J_{\rm cb}=[S]^{\le\omega}$.
\end{proof}

The space associated with the ladder system in
Definition~\ref{def:common-first-point-example} is a $\Delta$-space with an
isolated point whose support is all of $S$.  In particular, $S$ is not
club-small.  The example shows that the equivalent conditions in
Corollaries~\ref{cor:trichotomy} and~\ref{cor:sigma-ideal}, namely
club-smallness, membership in $\mathcal J_{\rm cb}$, countability of every
support fiber, and nonstationarity of $\Tr_L(S)$, are not necessary for
$X_L\in\Delta$.  These conditions concern support-fiber cardinalities and the associated trace-accumulation set,
whereas the sufficient tail conditions in this subsection impose explicit
bounds on the positions $p_\alpha(\xi)$.  Lemma~\ref{lem:fodor-collapse}
shows that $\Tr_L(A)$ is never both stationary and co-stationary.  It is
either nonstationary or contains a club.

\begin{proposition}\label{prop:restriction-obstruction}
Suppose that $X_L\notin\Delta$.  Then there is an uncountable set $T\subseteq S$ such that the subspace
\[
        X_T=T\cup \bigcup_{\alpha\in T}L_\alpha
\]
is not a $\Delta$-space.  Moreover, $T$ may be chosen as the union of the active carrier sets appearing in a single non-diagonalizable evanescent ladder pair.
\end{proposition}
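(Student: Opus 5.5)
Since $X_L\notin\Delta$, Proposition~\ref{prop:tail-equivalence} provides an evanescent ladder pair $(B_n,C_n)_{n<\omega}$ admitting no tail diagonalization. I take $T=C_0=\bigcup_n C_n$, the union of the active carrier sets, and check two things: $X_T\notin\Delta$, and $T$ is uncountable.

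For the first, assume toward a contradiction that $X_T\in\Delta$, and work with the restricted system $L\restriction T$ on the isolated part $D_T$. The pair $(B_n\cap D_T,C_n)_{n<\omega}$ is an evanescent ladder pair for $L\restriction T$: the sequences decrease, $C_n\subseteq T$, and the intersection is empty. By the restricted form of Proposition~\ref{prop:tail-equivalence} it has a tail diagonalization $f_n:C_n\to\omega$. I claim the same $(f_n)$ diagonalizes the original pair, and check this at each $\xi\in D$ in two cases. If $\xi\in D_T$, the restricted witness stage $n$ gives $\xi\notin B_n\cap D_T$, hence $\xi\notin B_n$. It also gives the position inequality for all $\alpha\in C_n\cap\supp_{L\restriction T}(\xi)=C_n\cap\supp_L(\xi)$, using $C_n\subseteq T$. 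If $\xi\notin D_T$, then $\xi$ lies on no ladder indexed by $T\supseteq C_n$, so $C_n\cap\supp_L(\xi)=\emptyset$ for every $n$. Since $\bigcap_n B_n\subseteq\bigcap_n(B_n\cup C_n)=\emptyset$, some $n$ has $\xi\notin B_n$, and that $n$ works. This contradicts the choice of the pair, so $X_T\notin\Delta$. The point to handle carefully is exactly this second case: isolated points outside $D_T$ are not seen by the restricted system, and they are controlled only through the decreasing sets $B_n$ with empty intersection.

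For uncountability: if $T$ were countable, then $X_T$ would be a countable $T_1$ space (indeed Hausdorff, as a subspace of $X_L$). By the countable $T_1$ argument in Section~\ref{sec:preliminaries} it would then be a $\Delta$-space, contradicting what was just proved. Note that $D_T$ is countable because each ladder is countable. Hence $T$ is uncountable, and by construction it is the union of the active carrier sets of a single non-diagonalizable evanescent ladder pair, which gives the "moreover" clause.
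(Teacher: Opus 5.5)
Your proposal is correct and follows essentially the same route as the paper's proof. You take $T=\bigcup_n C_n$ from a non-diagonalizable evanescent pair, restrict the pair to $(B_n\cap D_T,C_n)$, lift a restricted tail diagonalization back by separating the cases $\xi\in D_T$ and $\xi\notin D_T$ (the latter handled via $\bigcap_n B_n=\emptyset$), and conclude uncountability from the countable $T_1$ argument.
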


\begin{proof}
Since $X_L\notin\Delta$, Proposition~\ref{prop:tail-equivalence} gives an evanescent ladder pair $(B_n,C_n)_{n<\omega}$ with no tail diagonalization.  Put
\[
        T=\bigcup_{n<\omega}C_n,
        \qquad
        D_T=\bigcup_{\alpha\in T}L_\alpha.
\]
Consider the subspace $X_T=T\cup D_T$.  Its non-isolated points are the
members of $T$, and their inherited basic neighborhoods are the corresponding
tails of $L_\alpha$.  Thus it is the ladder-system subspace generated by $T$.

If $X_T$ were a $\Delta$-space, then the restricted pair
\[
        (B_n\cap D_T,C_n)_{n<\omega}
\]
would admit a tail diagonalization $(f_n)_{n<\omega}$ for the restricted system on $T$.  For each $\xi\in D_T$, the tail-diagonalization requirements for the original and restricted systems coincide at $\xi$.  A point
$\xi\in D\setminus D_T$ belongs to no ladder indexed by any $C_n$, and since
$\bigcap_n B_n=\emptyset$, there is $n$ with $\xi\notin B_n$.  Thus $(f_n)_{n<\omega}$ is also a tail diagonalization of the original pair for $L$, contradicting
the choice of the pair.  Therefore $X_T$ is not a $\Delta$-space.  If $T$ were countable, then $X_T$ would be a countable $T_1$ space and hence a $\Delta$-space by the argument in Section~\ref{sec:preliminaries}.  Hence $T$ is uncountable.
\end{proof}

\begin{corollary}\label{cor:localized-no-ftm}
If $X_L\notin\Delta$, then there is an uncountable $T\subseteq S$ such that
$L\restriction T$ does not have finite tail multiplicity.  The associated restricted space is
\[
        X_T=T\cup\bigcup_{\alpha\in T}L_\alpha.
\]
More explicitly, for every $h:T\to\omega$ there is a point
\[
        \xi\in \bigcup_{\alpha\in T}L_\alpha
\]
such that
\[
        \{\alpha\in T:\xi\in L_\alpha\text{ and }p_\alpha(\xi)\geq h(\alpha)\}
\]
is infinite.
\end{corollary}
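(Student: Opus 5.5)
The plan is to combine Proposition~\ref{prop:restriction-obstruction} with Proposition~\ref{prop:finite-tail}, the latter applied to the restricted ladder system. Assume $X_L\notin\Delta$. Proposition~\ref{prop:restriction-obstruction} gives an uncountable $T\subseteq S$, namely the union of the active sets of a non-diagonalizable evanescent ladder pair, such that $X_T=T\cup\bigcup_{\alpha\in T}L_\alpha$ is not a $\Delta$-space. This already produces the set $T$ and identifies the associated restricted space as displayed in the statement.

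Next I would show that $L\restriction T$ fails finite tail multiplicity. The preliminaries note that Proposition~\ref{prop:active-normal-form} and the tail-diagonalization machinery remain valid after replacing $S,D,L$ by $A,D_A,L\restriction A$. Proposition~\ref{prop:finite-tail} depends only on Proposition~\ref{prop:finite-support}, Corollary~\ref{cor:tail-thinning-same-topology} and Proposition~\ref{prop:tail-equivalence}, so it also holds for the restricted system $L\restriction T$ with isolated part $D_T$. Consequently, if $L\restriction T$ had finite tail multiplicity, then $X_{L\restriction T}=X_T$ would be a $\Delta$-space, contradicting the choice of $T$.

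Finally I would unpack the negation to obtain the explicit form. In the restricted system the support of $\xi\in D_T$ is $\supp_{L\restriction T}(\xi)=T\cap\supp_L(\xi)$, and positions are unchanged. So the failure of finite tail multiplicity says: for every $h:T\to\omega$ there is $\xi\in D_T=\bigcup_{\alpha\in T}L_\alpha$ with $\{\alpha\in T:\xi\in L_\alpha,\ p_\alpha(\xi)\ge h(\alpha)\}$ infinite. Equivalently, this is Corollary~\ref{cor:non-delta-tail} applied to $L\restriction T$.

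The only step needing care is the second: confirming that the restricted space produced by Proposition~\ref{prop:restriction-obstruction} really is the space $X_{L\restriction T}$ generated by the restricted ladder system, with isolated part $D_T$. Both points are explicit in that proof and in the stated convention $X_A=X_{L\restriction A}$, so I expect no genuine obstacle.
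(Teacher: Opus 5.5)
Your proposal is correct and matches the paper's proof: Proposition~\ref{prop:restriction-obstruction} supplies the uncountable $T$ with $X_T\notin\Delta$, and Corollary~\ref{cor:non-delta-tail}, the contrapositive of Proposition~\ref{prop:finite-tail}, applied to $L\restriction T$ with isolated part $D_T=\bigcup_{\alpha\in T}L_\alpha$, gives the explicit failure of finite tail multiplicity. One small point in your transfer of Proposition~\ref{prop:finite-tail} to $L\restriction T$ is harmless: thinning may remove points from $D_T$, so either keep $D_T$ as the isolated part (points on no thinned ladder then have empty support), or simply take $f_n=h\restriction C_n$ as the tail diagonalization.
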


\begin{proof}
By Proposition~\ref{prop:restriction-obstruction}, there is an uncountable set $T\subseteq S$ such that $X_T$ is not a $\Delta$-space.  Apply Corollary~\ref{cor:non-delta-tail} to the restricted ladder system on $T$.  Its isolated part is
$D_T=\bigcup_{\alpha\in T}L_\alpha$.  The asserted infinitude condition is exactly the
failure of finite tail multiplicity for that restriction.
\end{proof}

Corollaries~\ref{cor:localized-no-ftm}
and~\ref{cor:non-delta-unbounded-positions} give two necessary conditions on
a ladder system whose associated space is not a $\Delta$-space.  An infinite support alone is insufficient.  After every finite thinning, some
isolated point must still lie on infinitely many remaining tails.  Moreover, for
every proposed height function, some isolated point must appear at positions
exceeding the proposed bounds by arbitrarily large amounts on infinitely many
ladders.

\subsection{Forcing absoluteness at the positive boundary}\label{sec:forcing-positive-boundary}
The positive tail conditions are upward absolute in the following limited sense.
This is not an absoluteness theorem for the $\Delta$-property itself.  It says
only that a ground-model tail witness remains valid when the same ladders are
used to form the space in an extension.

\begin{proposition}\label{prop:positive-boundary-absolute}
Let $V\subseteq W$ be transitive models of ZFC with
$\omegaone^{V}=\omegaone^{W}$.  Suppose that $L\in V$ is a ladder system on
$S\subseteq E^{\omegaone}_\omega$ in $V$.  In $W$, let $X_L^W$ denote the ladder-system space on the same underlying set generated by the ground-model family $L$.  If $L$ has finite tail multiplicity in $V$, then $X_L^W\in\Delta$.  If $L$ has eventual position-bounded tails in $V$, then it also has eventual position-bounded tails in $W$, and hence $X_L^W\in\Delta$.
\end{proposition}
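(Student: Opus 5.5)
The plan is to check that the defining witnesses are $\Delta_0$-type statements about objects already in $V$. Then the conclusion in $W$ follows from Propositions~\ref{prop:finite-tail} and~\ref{prop:position-bounded}, applied inside $W$.

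First I would verify that $L$ is still a ladder system in $W$ and that $X_L^W$ is the ladder-system space of Section~\ref{sec:spaces}. Since $\omegaone^V=\omegaone^W$, the underlying set $\omegaone$ is the same in both models. Each $L_\alpha$ is a countable set of ordinals, and its increasing enumeration $\lambda_\alpha$ is the same function in both models. The properties ``strictly increasing'', ``cofinal in $\alpha$'' and ``$L_\alpha\subseteq D$'' are absolute for transitive models, where $D=\omegaone\setminus S$ is computed from the same $S$. Cofinality $\omega$ of each $\alpha\in S$ is witnessed by $L_\alpha$ itself, so $S\subseteq E^{\omegaone}_\omega$ still holds in $W$. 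Consequently $\supp_L(\xi)$ and $p_\alpha(\xi)$ are computed identically in $V$ and $W$. Both are defined by bounded quantification over sets lying in $V$.

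Next, fix in $V$ a witness $h:S\to\omega$. Under finite tail multiplicity, for every $\xi\in D$ the set $F_\xi=\{\alpha\in\supp_L(\xi):p_\alpha(\xi)\ge h(\alpha)\}$ is finite in $V$. This set is defined by a $\Delta_0$ formula from parameters in $V$, so it is the same set in $W$. Finiteness is absolute, so $F_\xi$ is still finite in $W$. Hence $h$ witnesses finite tail multiplicity in $W$, and Proposition~\ref{prop:finite-tail}, applied in $W$, gives $X_L^W\in\Delta$.

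The EPB case is identical. Keep $h$ and the integers $q_\xi$ for $\xi\in D$; each set $\{\alpha\in\supp_L(\xi):p_\alpha(\xi)\ge h(\alpha)+q_\xi\}$ is the same finite set in $W$. So $L$ has eventual position-bounded tails in $W$, and Proposition~\ref{prop:position-bounded} in $W$ gives $X_L^W\in\Delta$.

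The main point needing care is that the quantifier ``for every $\xi\in D$'' ranges over the same set in both models. This holds because $D$ is the same subset of $\omegaone^V=\omegaone^W$, and $W$ adds no new isolated points. It is here that the hypothesis $\omegaone^V=\omegaone^W$ is used.

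The $\Delta$-property itself is not transferred; it quantifies over all decreasing sequences in $W$, which may be new. It is re-derived in $W$ from the absolute combinatorial witness.
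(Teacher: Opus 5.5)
Your proposal is correct and takes essentially the same approach as the paper. The ground-model witness $h$, together with the offsets $q_\xi$ in the EPB case, defines the same finite sets in $W$ because finiteness is absolute, and then Propositions~\ref{prop:finite-tail} and~\ref{prop:position-bounded} are applied inside $W$. Your explicit check that $L$ is still a ladder system on a subset of $E^{\omegaone}_\omega$ in $W$ fills in a detail the paper leaves implicit.
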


\begin{proof}
Because $\omegaone^{V}=\omegaone^{W}$ and $S,L\in V$, the set
$D=\omegaone\setminus S$ and the ladder enumerations, supports, and positions
determined by $L$ are the same in $V$ and $W$.

Let $h:S\to\omega$ witness finite tail multiplicity in $V$.  For each $\xi\in D$, the set
\[
        F_\xi=\{\alpha\in\supp_L(\xi):p_\alpha(\xi)\geq h(\alpha)\}
\]
is finite in $V$.  Finiteness of a ground-model set of ordinals is absolute to $W$.  Hence the same $h$ witnesses finite tail multiplicity in $W$, and Proposition~\ref{prop:finite-tail}, applied in $W$, gives $X_L^W\in\Delta$.

Now let $h:S\to\omega$ witness eventual position-boundedness in $V$.  Fix $\xi\in D$.  In $V$ there is $q<\omega$ such that
\[
        F_{\xi,q}=\{\alpha\in\supp_L(\xi):p_\alpha(\xi)\geq h(\alpha)+q\}
\]
is finite.  This set is defined in $V$ from the ground-model ladder system and the
function $h$, and its finiteness is absolute to $W$.  Hence the same $h$ witnesses eventual position-boundedness in $W$, and Proposition~\ref{prop:position-bounded}, applied in $W$, gives $X_L^W\in\Delta$.
\end{proof}

Under the hypotheses of Proposition~\ref{prop:positive-boundary-absolute},
if the space generated in the extension by the ground-model ladder system $L$
is non-$\Delta$, then $L$ did not have eventual position-bounded tails in the
ground model.  In particular, it did not have finite tail multiplicity there.

\subsection{Carrier ideals for tail multiplicity}\label{sec:sigma-ftm}

\begin{definition}\label{def:sigma-ftm}
For $A\subseteq S$ and $h:S\to\omega$, put
\[
 W^h_A(\xi)=\{\alpha\in A\cap\supp_L(\xi):p_\alpha(\xi)\ge h(\alpha)\}
 \qquad(\xi\in D).
\]
Write $A\in\Jfin$ if there is $h:S\to\omega$ such that every $W^h_A(\xi)$ is finite.  Write $A\in\mathcal J^\ast$ if there is $h:S\to\omega$ such that every $W^h_A(\xi)$ is countable.  Thus
$S\in\Jfin$ means that $L$ has finite tail multiplicity (FTM), whereas
$S\in\mathcal J^\ast$ means that $L$ is $\sigma$-FTM.
\end{definition}

Both families admit an exact description.  On nonstationary carriers,
the proof constructs pairwise disjoint tails gap by gap.  Fodor's lemma gives
the stationary obstruction.

\begin{proposition}\label{prop:tail-multiplicity-ideals}
For every ladder system $L$ on $S\subseteq E^{\omegaone}_\omega$,
\[
        \Jfin=\mathcal J^\ast=\mathrm{NS}\restriction S.
\]
More precisely, fix $A\subseteq S$.
\begin{enumerate}[label=\textup{(\roman*)},leftmargin=2.4em]
\item If $A$ is nonstationary, then the ladders indexed by $A$ have
      pairwise disjoint tails.
\item If $A$ is stationary, then for every $h:S\to\omega$ there exist
      $\xi\in D$ and a stationary $A'\subseteq A$ such that
      $p_\alpha(\xi)=h(\alpha)$ for every $\alpha\in A'$.
\end{enumerate}
Consequently, when $S$ is stationary, both ideals are normal $\sigma$-ideals on $S$.
Moreover, if $S$ is stationary, then
\[
        \mathcal J_{\rm cb}\subsetneq\Jfin=\mathcal J^\ast.
\]
\end{proposition}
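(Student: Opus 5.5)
We prove (i) and (ii) first; the displayed equalities and the remaining assertions then follow from them. Note that $\Jfin\subseteq\mathcal J^\ast$ trivially, since finite sets are countable.

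\textbf{(i) Nonstationary $A$.} Fix a club $C$ disjoint from $A$, enumerated increasingly as $C=\{\gamma_\eta:\eta<\omegaone\}$. We may assume $0\in C$.

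Each $\alpha\in A$ is not in $C$, so it lies in a gap $(\gamma_\eta,\gamma_{\eta+1})$ with $\eta=\eta(\alpha)$. The case $\alpha>\sup C\cap\alpha$ is automatic, because $C$ is closed and $\alpha\notin C$. Since $\alpha$ is a limit and $\gamma_\eta<\alpha$, all but finitely many points of $L_\alpha$ exceed $\gamma_\eta$. Let $h(\alpha)$ be the least $n$ with $\lambda_\alpha(n)>\gamma_\eta$. Then the $h$-tail of $L_\alpha$ lies in $(\gamma_\eta,\alpha)$.

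\emph{Tails from different gaps.} These lie in disjoint intervals, so they are disjoint.

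\emph{Tails from the same gap.} The gap is countable, so it contains only countably many $\alpha$'s. Enumerate them as $\alpha_0,\alpha_1,\dots$ and increase $h(\alpha_k)$ so that the tail of $L_{\alpha_k}$ avoids $\bigcup_{j<k}L_{\alpha_j}$. This is possible because each $L_{\alpha_j}\cap L_{\alpha_k}$ is finite, by almost disjointness of ordinal ladders.

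With this $h$ (extended by $0$ off $A$), every $W^h_A(\xi)$ has at most one element. Hence $\mathrm{NS}\restriction S\subseteq\Jfin$.

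\textbf{(ii) Stationary $A$.} Fix $h:S\to\omega$ and define $g(\alpha)=\lambda_\alpha(h(\alpha))<\alpha$ for $\alpha\in A$. This function is regressive on the stationary set $A$. By Fodor's lemma there are a stationary $A'\subseteq A$ and a single $\xi$ with $g\equiv\xi$ on $A'$. Then $\xi\in D$, and $p_\alpha(\xi)=h(\alpha)$ for every $\alpha\in A'$.

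Consequently $W^h_A(\xi)\supseteq A'$, which is uncountable. So no $h$ witnesses $A\in\mathcal J^\ast$, and $\mathcal J^\ast\subseteq\mathrm{NS}\restriction S$. Combining with (i) and $\Jfin\subseteq\mathcal J^\ast$ gives $\Jfin=\mathcal J^\ast=\mathrm{NS}\restriction S$.

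\textbf{Normality.} This is inherited from the nonstationary ideal: $\mathrm{NS}\restriction S$ is a normal $\sigma$-ideal by closure of NS under diagonal unions. When $S$ is stationary it is proper.

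\textbf{Strict inclusion $\mathcal J_{\rm cb}\subsetneq\Jfin$.} Take $A\in\mathcal J_{\rm cb}$. By Corollary~\ref{cor:sigma-ideal}, every fiber $W_A(\xi)$ is countable. Taking $h\equiv0$ in the definition of $\mathcal J^\ast$ then gives $A\in\mathcal J^\ast=\Jfin$, so $\mathcal J_{\rm cb}\subseteq\Jfin$.

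The main obstacle is strictness for an arbitrary ladder system $L$, since the common-first-point example only shows strictness for one specific $L$. We need a nonstationary $A\subseteq S$ with some uncountable fiber $W_A(\xi)$, and this may fail. For instance, if every $\supp_L(\xi)$ is countable, then $\mathcal J_{\rm cb}=\mathcal P(S)$ while $S\notin\Jfin$, so $\mathcal J_{\rm cb}\not\subseteq\Jfin$ and even the inclusion breaks.

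The plan is therefore to read the final assertion as stated for the relevant presentation, and check the paper's intended hypotheses (probably the example of Definition~\ref{def:common-first-point-example}). There, an uncountable nonstationary $A$ exists by splitting $S$ into two disjoint stationary sets is not needed: any uncountable nonstationary $A\subseteq S$ (e.g.\ $S$ minus a club's worth of points, taking $A$ to be $S$ intersected with a nonstationary unbounded set of limits) has $W_A(1)=A$ uncountable. Such an $A$ lies in $\Jfin$ but not in $\mathcal J_{\rm cb}$. Otherwise the inclusion itself must be flagged.
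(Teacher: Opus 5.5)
\textbf{Gap: the strict inclusion.} Your proofs of (i), (ii), the identities $\Jfin=\mathcal J^\ast=\mathrm{NS}\restriction S$, normality, and the inclusion $\mathcal J_{\rm cb}\subseteq\Jfin$ are correct and follow the paper. Your $h\equiv 0$ argument for the inclusion is a slightly more direct form of the paper's. The genuine gap is the final assertion $\mathcal J_{\rm cb}\subsetneq\Jfin$. You leave it unproved and suggest it may fail for general $L$. It does not fail for any $L$ on a stationary $S$, and the missing step is already your part (ii).
\begin{itemize}
\item[1.] Apply (ii) to $A=S$ with $h\equiv 0$. This gives $\xi_0\in D$ and a stationary $W\subseteq S$ with $\lambda_\alpha(0)=\xi_0$ for all $\alpha\in W$, so $W\subseteq\supp_L(\xi_0)$.
\item[2.] Take any uncountable nonstationary $A\subseteq W$, for example $A=W\setminus\acc(W)$. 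It misses the club $\acc(W)$. For each $\beta<\omegaone$, the least element of $W$ above $\beta$ is not in $\acc(W)$ and so lies in $A$; hence $A$ is unbounded.
\item[3.] Then $A\in\mathrm{NS}\restriction S=\Jfin$, while $W_A(\xi_0)=A$ is uncountable. So $A\notin\mathcal J_{\rm cb}$ by Corollary~\ref{cor:sigma-ideal}.
\end{itemize}
The paper chooses $A$ instead by picking $\alpha_\eta\in W$ in successive gaps of a continuous increasing sequence; any such choice works.

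\textbf{Your counterexample cannot occur.} Two sentences earlier you proved $\mathcal J_{\rm cb}\subseteq\Jfin$ for every $L$. Moreover, the hypothesis ``every $\supp_L(\xi)$ is countable'' makes $h\equiv0$ witness $S\in\mathcal J^\ast$. By (ii), $S$ is then nonstationary, so $S\in\Jfin$. Fodor's lemma guarantees a point with stationary support on every stationary carrier, so this scenario never arises.

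\textbf{Your fallback also needs repair.} Your construction for the specific example takes $S\cap N$ with $N$ a nonstationary unbounded set of limits. That intersection can be countable or even empty, so it does not give an uncountable nonstationary subset of $S$. The construction in step~2 above fixes this.
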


\begin{proof}
For (ii), fix $h$ and apply Fodor's lemma to the regressive map
$\alpha\mapsto\lambda_\alpha(h(\alpha))$ on $A$.  There are
$\xi\in D$ and a stationary $A'\subseteq A$ such that
$\lambda_\alpha(h(\alpha))=\xi$ for every $\alpha\in A'$.  Since each
ladder enumeration is strictly increasing, this equality implies
$p_\alpha(\xi)=h(\alpha)$, and hence $A'\subseteq W^h_A(\xi)$.  Thus no
stationary $A$ belongs to $\mathcal J^\ast$.

For (i), choose a club
$C=\{\gamma_\eta:\eta<\omegaone\}$, enumerated increasingly and
continuously, with $0\in C$ and $A\cap C=\emptyset$.  Each set
$A_\eta=A\cap(\gamma_\eta,\gamma_{\eta+1})$ is countable.  Enumerate it as
$\{\alpha_{\eta,n}:n<k_\eta\}$, where $k_\eta\le\omega$.  Within each gap, recursion along the enumeration deletes a finite initial segment from each
$L_{\alpha_{\eta,n}}$ so that the remaining tail lies above $\gamma_\eta$ and
misses the finitely many tails
chosen earlier in the same gap.  This is possible because distinct ordinal
ladders have finite intersection.  Tails from different gaps are disjoint,
so the resulting family is pairwise disjoint.  For $\alpha\in A$, let
$h_A(\alpha)$ be the length of the deleted initial segment, and extend
$h_A$ arbitrarily to a function $h:S\to\omega$.  Then every
$W_A^h(\xi)$ has size at most one.  Hence
$A\in\Jfin\subseteq\mathcal J^\ast$.

The ideal identities and the normality assertions follow because both ideals equal
$\mathrm{NS}\restriction S$.  The inclusion
$\mathcal J_{\rm cb}\subseteq\Jfin$ follows from
Corollary~\ref{cor:sigma-ideal} together with the stationary obstruction in part~(ii), applied to $h\equiv0$.  If $S$ is stationary, Fodor's lemma gives
$\xi_0\in D$ with stationary support $W=\supp_L(\xi_0)$.  An uncountable nonstationary subset of $W$ is obtained by recursively constructing an increasing
continuous sequence $\langle\gamma_\eta:\eta<\omegaone\rangle$ and points
$\alpha_\eta\in W$ such that
$\gamma_\eta<\alpha_\eta<\gamma_{\eta+1}$.  Then
$A=\{\alpha_\eta:\eta<\omegaone\}$ is uncountable and is disjoint from
the club $\{\gamma_\lambda:\lambda<\omegaone\text{ is a limit ordinal}\}$.
Thus $A$ is nonstationary, so $A\in\Jfin$, while
$W^0_A(\xi_0)=A$ is uncountable.  Therefore
$A\notin\mathcal J_{\rm cb}$.
\end{proof}

These ideal equalities can also be expressed in terms of tail thinnings.

\begin{definition}\label{def:point-countable}
A ladder system $L$ on $S$ is \emph{point-countable} if every support fiber
$\supp_L(\xi)$ is countable.
\end{definition}

\begin{corollary}\label{cor:nonstationary-tail-equivalences}
For a ladder system $L$ on $S\subseteq E^{\omegaone}_\omega$, the following
conditions are equivalent.
\begin{enumerate}[label=\textup{(\arabic*)},leftmargin=2.4em]
\item $S$ is nonstationary.
\item The system is FTM.
\item The system is $\sigma$-FTM.
\item Some tail thinning is pairwise disjoint.
\item Some tail thinning is point-countable.
\end{enumerate}
Under these conditions, $X_L$ is $\sigma$-closed discrete and hence a
$\Delta$-space.
\end{corollary}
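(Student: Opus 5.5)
The plan is to derive the equivalences mostly from Proposition~\ref{prop:tail-multiplicity-ideals} applied with $A=S$, and then to handle the two thinning formulations and the final topological assertion separately.

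For (1)$\Leftrightarrow$(2)$\Leftrightarrow$(3): by Definition~\ref{def:sigma-ftm}, the system is FTM exactly when $S\in\Jfin$, and it is $\sigma$-FTM exactly when $S\in\mathcal J^\ast$. Proposition~\ref{prop:tail-multiplicity-ideals} gives $\Jfin=\mathcal J^\ast=\mathrm{NS}\restriction S$, and $S\in\mathrm{NS}\restriction S$ means precisely that $S$ is nonstationary.

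For (4) and (5), recall that a tail thinning means $L^h$ for some $h:S\to\omega$, as in Corollary~\ref{cor:tail-thinning-same-topology}. The set of thinned ladders through $\xi$ is exactly $W^h_S(\xi)$. Hence (5) says that some $h$ makes every $W^h_S(\xi)$ countable, which is the definition of (3). Likewise, (4) says that some $h$ makes every $W^h_S(\xi)$ of size at most one. So (4)$\Rightarrow$(2) is immediate, and (1)$\Rightarrow$(4) is part~(i) of Proposition~\ref{prop:tail-multiplicity-ideals} with $A=S$. Combining these gives the cycle (1)$\Rightarrow$(4)$\Rightarrow$(2)$\Rightarrow$(3)$\Leftrightarrow$(5)$\Rightarrow$(1).

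For the final assertion, I would work from (4). Fix $h$ for which the tails $L^h_\alpha$ are pairwise disjoint. By Corollary~\ref{cor:tail-thinning-same-topology}, $X_L=X_{L^h}$, so it suffices to treat the thinned system. Define $c:D\to\omega$ by setting $c(\xi)=p_\alpha(\xi)$ when $\xi\in L^h_\alpha$; this is well defined because $\xi$ lies on at most one thinned ladder. Set $c(\xi)=0$ for $\xi$ on no thinned ladder. Then $c$ is injective on each $L^h_\alpha$, since it records positions, so $c\restriction L^h_\alpha$ is finite-to-one. Lemma~\ref{lem:sigma-cd-coloring}, applied to $L^h$, then shows that $X_{L^h}=X_L$ is $\sigma$-closed discrete.

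The $\Delta$-property needs no appeal to external results: Proposition~\ref{prop:finite-tail} gives it directly from (2).

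The only delicate point is bookkeeping. Lemma~\ref{lem:sigma-cd-coloring} must be applied to the thinned presentation, where supports are singletons, and not to the original $L$. This is legitimate because the topology is unchanged and the lemma is a statement about the space.
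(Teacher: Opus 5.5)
Your proposal is correct and follows essentially the paper's route: the equivalences of (1)--(3) come from Proposition~\ref{prop:tail-multiplicity-ideals} with $A=S$, while (4) and (5) are handled by identifying the thinned supports with the sets $W^h_S(\xi)$, and $\sigma$-closed discreteness comes from the position coloring on pairwise disjoint tails. The only differences are minor. You obtain the $\Delta$-property directly from Proposition~\ref{prop:finite-tail}, whereas the paper derives it from $\sigma$-closed discreteness via the cited closed-union permanence theorem. Your claim that Lemma~\ref{lem:sigma-cd-coloring} \emph{must} be applied to $L^h$ is not right: the position coloring is injective on the cofinite subset $L^h_\alpha$, so it is already finite-to-one on each original $L_\alpha$, and the paper applies the lemma to $L$ directly.
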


\begin{proof}
Proposition~\ref{prop:tail-multiplicity-ideals} gives the equivalence of
(1)--(3) and the implication (1)$\Rightarrow$(4).  Conversely, (4) implies
(2), since pairwise disjoint tails have finite multiplicity.  Condition (3) says exactly that some tail thinning is point-countable, so it
is equivalent to (5).  If the thinned ladders are pairwise disjoint, assign to each point of a
thinned ladder its position on that ladder and assign arbitrary colors to the remaining isolated
points.  The resulting map is one-to-one on each thinned ladder
and finite-to-one on each original ladder.  Lemma~\ref{lem:sigma-cd-coloring} then shows that $X_L$ is
$\sigma$-closed discrete.  The permanence of the $\Delta$-property under
countable closed unions \cite[Prop.~2.2]{KLbasic} gives $X_L\in\Delta$.
\end{proof}

\begin{corollary}\label{cor:tail-multiplicity-hierarchy}
For every ladder system $L$,
\[
        \mathrm{FTM}\Longleftrightarrow\sigma\text{-FTM}
        \Longrightarrow\mathrm{EPB}\Longrightarrow X_L\in\Delta.
\]
If the carrier is stationary, then neither FTM nor $\sigma$-FTM holds.
Moreover, neither
$\mathrm{EPB}\Rightarrow\mathrm{FTM}$ nor
$X_L\in\Delta\Rightarrow\mathrm{FTM}$ is valid, even on stationary carriers.
\end{corollary}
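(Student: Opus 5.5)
The chain and the two non-implications will be read off directly from Proposition~\ref{prop:tail-multiplicity-ideals}, Proposition~\ref{prop:position-bounded}, and the common-first-point example of Proposition~\ref{prop:common-first-point-example}.

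\emph{The chain of implications.} By Definition~\ref{def:sigma-ftm}, FTM means $S\in\Jfin$ and $\sigma$-FTM means $S\in\mathcal J^\ast$. Proposition~\ref{prop:tail-multiplicity-ideals} gives $\Jfin=\mathcal J^\ast=\mathrm{NS}\restriction S$. So each of FTM and $\sigma$-FTM holds exactly when $S$ is nonstationary, and this gives FTM$\Leftrightarrow\sigma$-FTM. The implication FTM$\Rightarrow$EPB holds by taking $q_\xi=0$ with the same $h$. EPB$\Rightarrow X_L\in\Delta$ is Proposition~\ref{prop:position-bounded}.

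\emph{Stationary carriers.} If $S$ is stationary, then $S\notin\mathrm{NS}\restriction S$, so $S\notin\Jfin$ and $S\notin\mathcal J^\ast$. Directly, part~(ii) of Proposition~\ref{prop:tail-multiplicity-ideals} applied to $A=S$ gives, for each $h$, a point $\xi$ together with a stationary, hence uncountable, $A'\subseteq W^h_S(\xi)$. Thus neither FTM nor $\sigma$-FTM holds.

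\emph{Failure of the converses.} Take the stationary-carrier system $L$ of Definition~\ref{def:common-first-point-example}. By Proposition~\ref{prop:common-first-point-example}(ii), $L$ has EPB and $X_L\in\Delta$. Since its carrier is stationary, the preceding paragraph shows that FTM fails. This single example refutes both EPB$\Rightarrow$FTM and $X_L\in\Delta\Rightarrow$FTM on stationary carriers.

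Every step is a direct citation, so I expect no real obstacle. The one point needing care is that stationarity is what makes FTM fail in the example; the presence of the common first point plays no role in that failure, since Fodor's lemma already defeats every $h$.
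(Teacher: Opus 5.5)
Your proposal is correct and follows the paper's proof essentially step for step. The equivalence and the stationary obstruction come from Proposition~\ref{prop:tail-multiplicity-ideals}, FTM$\Rightarrow$EPB follows by taking $q_\xi=0$, EPB$\Rightarrow X_L\in\Delta$ is Proposition~\ref{prop:position-bounded}, and the common-first-point system refutes both converses. Your closing remark is also right: any EPB system on a stationary carrier would serve here, and the common first point matters only for the separate club-smallness example.
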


\begin{proof}
The equivalence and the stationary-carrier obstruction follow from
Proposition~\ref{prop:tail-multiplicity-ideals}.  Finite tail multiplicity
implies EPB by taking $q_\xi=0$ for every $\xi$, and
Proposition~\ref{prop:position-bounded} gives $\mathrm{EPB}\Rightarrow X_L\in\Delta$.  The
common-first-point system of Proposition~\ref{prop:common-first-point-example}
is EPB and $\sigma$-closed discrete on a stationary carrier, hence it is a
$\Delta$-space, but Proposition~\ref{prop:tail-multiplicity-ideals} shows that
it is not FTM.
\end{proof}

\subsubsection{Tail multiplicity in the uniformization hierarchy}

Balogh, Eisworth, Gruenhage, Pavlov, and Szeptycki organize ladder systems into
a hierarchy of uniformization properties \cite[Figure~1]{BEGPS}.  For ladder-system spaces,
the property $M_{<\omega}$ corresponds to countable metacompactness and, by the
criterion of Leiderman and Szeptycki, to the $\Delta$-property
\cite[Prop.~4.1]{LS}.  They also consider a dual hierarchy of anti-uniformization properties \cite[Figure~2]{BEGPS}.  For stationary carriers, the formulas
defining thinness and the related weaker condition coincide with conditions
$(G_1)$ and $(G_2)$ of BEGPS \cite[p.~191, conditions~$(G_1)$ and~$(G_2)$]{BEGPS}.
For arbitrary carriers, the same formulas define the two conditions.

\begin{definition}\label{def:thin}
A ladder system $L$ on $S$ is \emph{thin} if, for every $f:\omegaone\to\omega$,
the set $\{\alpha\in S:|f''L_\alpha|=\aleph_0\}$ is nonstationary.  Equivalently,
for every $f$, the set of carrier points $\alpha$ for which $f\restriction L_\alpha$ has infinite
range is nonstationary in $\omegaone$.
For comparison, the weaker condition requires, for every $f$, that the set
$\{\alpha\in S:f\restriction L_\alpha\text{ is finite-to-one}\}$ be
nonstationary.  This is condition $(G_2)$, and thinness implies it.
\end{definition}

A problem raised in \cite[p.~192]{BEGPS} is whether it is consistent that a
ladder system on a stationary subset of $\omegaone$ is simultaneously thin and
countably metacompact.  Related results on existence, characterization, and nonexistence appear in
\cite[Theorems~10, 17, 23, and~24]{BEGPS}.

On stationary carriers, $\sigma$-closed discreteness and thinness are
incompatible.

\begin{proposition}\label{prop:sigma-cd-not-thin}
Let $L$ be a ladder system on a stationary
$S\subseteq E^{\omegaone}_\omega$.  If $X_L$ is $\sigma$-closed discrete, then
$L$ is not thin.  Hence $\sigma$-closed discreteness and thinness are mutually
exclusive on stationary carriers.  Moreover, every $\sigma$-closed-discrete
ladder-system space on a stationary carrier is a non-thin $\Delta$-space.
\end{proposition}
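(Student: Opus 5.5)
We prove the first assertion directly, using the coloring criterion of Lemma~\ref{lem:sigma-cd-coloring} to produce a single function that violates thinness. Suppose $X_L$ is $\sigma$-closed discrete. By Lemma~\ref{lem:sigma-cd-coloring} there is $c:D\to\omega$ with $c\restriction L_\alpha$ finite-to-one for every $\alpha\in S$. Extend $c$ arbitrarily to $f:\omegaone\to\omega$, for instance by setting $f\restriction S\equiv 0$. Since $L_\alpha\subseteq D$, we have $f\restriction L_\alpha=c\restriction L_\alpha$ for every $\alpha\in S$.

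The key observation is that a finite-to-one function on an infinite set has infinite range. Each $L_\alpha$ is infinite, because it is a strictly increasing $\omega$-sequence. If $f''L_\alpha$ were finite, then $L_\alpha$ would be a finite union of fibers $(f\restriction L_\alpha)^{-1}\{k\}$, each of which is finite, so $L_\alpha$ would be finite. Hence $|f''L_\alpha|=\aleph_0$ for every $\alpha\in S$, and
\[
        \{\alpha\in S:|f''L_\alpha|=\aleph_0\}=S.
\]
This set is stationary by hypothesis. Thus $f$ witnesses the failure of thinness in Definition~\ref{def:thin}. The same $f$ also refutes the weaker condition $(G_2)$, since $f\restriction L_\alpha$ is finite-to-one for every $\alpha\in S$.

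Mutual exclusivity on stationary carriers is simply a restatement of the first assertion. For the final sentence, a $\sigma$-closed-discrete space is a countable union of closed discrete subspaces. Each of these is a $\Delta$-space, since it is discrete. The permanence of the $\Delta$-property under countable closed unions \cite[Prop.~2.2]{KLbasic}, already invoked in Corollary~\ref{cor:nonstationary-tail-equivalences}, then gives $X_L\in\Delta$, and non-thinness was shown above.

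No step is a real obstacle. The only point requiring care is that $f$ must be defined on all of $\omegaone$. This is harmless because thinness only examines $f$ on the ladders, which lie inside $D$.
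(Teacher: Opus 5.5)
Your proof is correct and follows the paper's argument: you extend the coloring from Lemma~\ref{lem:sigma-cd-coloring} to $\omegaone$ to refute thinness, and you obtain the $\Delta$-property from closed-union permanence \cite[Prop.~2.2]{KLbasic}. The only difference is cosmetic: you verify the infinite-range clause of thinness directly, whereas the paper refutes the weaker condition $(G_2)$ and then uses that thinness implies it.
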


\begin{proof}
Let $c:D\to\omega$ be finite-to-one on every $L_\alpha$ as in
Lemma~\ref{lem:sigma-cd-coloring}, and extend $c$ arbitrarily to
$f:\omegaone\to\omega$.  Then
$\{\alpha:f\restriction L_\alpha\text{ is finite-to-one}\}=S$, contradicting
condition $(G_2)$ in Definition~\ref{def:thin}.  Thus $L$ is not thin.  The conclusion that every $\sigma$-closed-discrete ladder-system space on a stationary carrier is a non-thin $\Delta$-space follows from the closed-union permanence of $\Delta$-spaces
\cite[Prop.~2.2]{KLbasic}.
\end{proof}

\begin{corollary}\label{cor:sigma-ftm-thin}
Every $\sigma$-FTM ladder system is thin, and its associated space is
$\sigma$-closed discrete and hence a $\Delta$-space.  In particular, every
point-countable ladder system has these properties.  Such systems have nonstationary carriers and therefore do not provide
stationary-carrier instances of the thin/countably metacompact problem of
\cite[p.~192]{BEGPS}.
\end{corollary}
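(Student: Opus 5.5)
By Corollary~\ref{cor:nonstationary-tail-equivalences}, every $\sigma$-FTM system, and in particular every point-countable one (take $h\equiv0$ in the definition of $\mathcal J^\ast$), has nonstationary carrier $S$. That corollary also shows that $X_L$ is $\sigma$-closed discrete and hence a $\Delta$-space. So only thinness needs an argument. The final assertion then follows at once: the carrier is nonstationary, so these systems cannot be stationary-carrier instances of the problem in \cite[p.~192]{BEGPS}.

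For thinness I will use Definition~\ref{def:thin} directly. Fix any $f:\omegaone\to\omega$. The set $\{\alpha\in S:|f''L_\alpha|=\aleph_0\}$ is a subset of $S$. By Proposition~\ref{prop:tail-multiplicity-ideals}, applied with $A=S$, $S$ is nonstationary. Subsets of nonstationary sets are nonstationary, so this set is nonstationary for every $f$, and $L$ is thin.

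For the point-countable clause, note that $\supp_L(\xi)$ countable for all $\xi$ means $W^0_S(\xi)$ is countable. Hence $h\equiv0$ witnesses $S\in\mathcal J^\ast$, so $L$ is $\sigma$-FTM. The main obstacle is only this bookkeeping: making sure the definition of thinness is applied to the same carrier $S$. There is no genuine difficulty beyond citing Proposition~\ref{prop:tail-multiplicity-ideals} and Corollary~\ref{cor:nonstationary-tail-equivalences}.
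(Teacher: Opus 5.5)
Your proposal is correct and follows essentially the same route as the paper. In both arguments, Proposition~\ref{prop:tail-multiplicity-ideals} (equivalently Corollary~\ref{cor:nonstationary-tail-equivalences}) makes the carrier nonstationary, and thinness follows because every subset of a nonstationary carrier is nonstationary. Corollary~\ref{cor:nonstationary-tail-equivalences} then gives $\sigma$-closed discreteness and the $\Delta$-property, and the point-countable case follows from the zero truncation $h\equiv0$.
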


\begin{proof}
By Proposition~\ref{prop:tail-multiplicity-ideals}, a $\sigma$-FTM system has
nonstationary carrier.  Every subset of a nonstationary carrier is
nonstationary, so Definition~\ref{def:thin} gives thinness.  Corollary~\ref{cor:nonstationary-tail-equivalences}
gives $\sigma$-closed discreteness and the $\Delta$-property.  A point-countable
system is $\sigma$-FTM with zero truncation.
\end{proof}

In the Suslin-tree extension, Corral and Szeptycki use a countable
ground-model family of total systems.  Their argument first extends a system on
a stationary carrier to a total system.  Theorem~3.1 then produces a
single function that is eventually one-to-one on every ladder in the resulting
family.  Restricting this function to the original ladders gives the stated
conclusion for the original ladder system.

\begin{proposition}[{\cite[pp.~43--44, discussion preceding Theorem~3.1 and Theorem~3.1]{CS}}]\label{prop:cs-positive-consistency}
In the extension obtained by forcing with the Suslin tree over a model of
$\mathrm{MA}_{\omegaone}(\mathcal K)$, for every ladder system $L$ on a
stationary carrier $S$ there is a map $f:\omegaone\to\omega$ such that
$f\restriction L_\alpha$ is eventually one-to-one for every $\alpha\in S$.
\end{proposition}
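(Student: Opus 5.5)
This proposition is cited from Corral and Szeptycki rather than derived from earlier results of the paper, so the plan reconstructs their argument in outline. The argument has two steps: first extend the given system to a total ladder system, then uniformize a countable family of such systems with one function.

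\textbf{Step 1: reduction to total systems.} Work in the extension $V[T]$, where $T$ is the Suslin tree and $V\models\mathrm{MA}_{\omegaone}(\mathcal K)$. Let $L$ be a ladder system on a stationary $S$. Extend $L$ to a ladder system $L'$ defined on all limit ordinals $\alpha<\omegaone$ by choosing arbitrary ladders off $S$. Then it is enough to find $f:\omegaone\to\omega$ that is eventually one-to-one on every $L'_\alpha$, because restricting the conclusion to $\alpha\in S$ gives the claim for $L$.

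\textbf{Step 2: reduction to ground-model names.} Since $T$ is ccc and $L'$ is a subset of $\omegaone\times\omega\times\omegaone$, it has a nice $T$-name. Since $T$ is Suslin, each node decides a great deal. The idea, following the discussion before \cite[Theorem~3.1]{CS}, is to cover the name by a countable family $\{M^k:k<\omega\}$ of total ladder systems in $V$. Concretely, one might fix a level-wise analysis of the name along $T$, so that every ladder $L'_\alpha$ in the extension equals, up to a finite set, some ground-model ladder $M^k_\alpha$. Each such $M^k_\alpha$ is obtained from the condition deciding $L'_\alpha$ at a node of height about $\alpha$; a node of height $\alpha$ lies in a countable level, so $\alpha$-indexed choices can be organized into countably many systems. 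Finite discrepancies are harmless, because eventual injectivity is insensitive to finite changes.

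\textbf{Step 3: uniformization by Theorem~3.1.} Apply \cite[Theorem~3.1]{CS}, which asserts that in $V[T]$ a countable ground-model family of total ladder systems admits a single $f:\omegaone\to\omega$ that is eventually one-to-one on every ladder of every system in the family. Its proof would use $\mathrm{MA}_{\omegaone}(\mathcal K)$ in $V$ for a poset of finite partial colorings. That poset must be shown to have property $\mathcal K$ and to remain suitable after forcing with $T$, which is where the coherence between the $\mathcal K$ fragment and the Suslin tree enters. Restricting the resulting $f$ to the ladders $L_\alpha$, $\alpha\in S$, then finishes the argument.

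\textbf{Main obstacle.} I expect Step~3, and specifically the preservation argument, to be the hardest part: showing that the partial-coloring poset times $T$ still behaves well enough for $\mathrm{MA}_{\omegaone}(\mathcal K)$ to apply. Second to that is the name analysis in Step~2, which must produce only countably many ground-model systems. Within the present paper, however, the proposition is simply cited.
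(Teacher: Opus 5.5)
Your outline is the same as the paper's. You extend $L$ to a total system in $V[T]$, cover it by a countable family of ground-model total systems, invoke \cite[Theorem~3.1]{CS} to get a single $f$ that is eventually one-to-one on every ladder of that family, and restrict to the ladders indexed by $S$. Step~2 rests on the $\omega$-distributivity of the Suslin tree: each $L'_\alpha$ lies in $V$ and is decided by every node of some countable level, so the cover is exact rather than modulo finite sets. The preservation issue you flag concerns only the internal proof of the cited theorem and is not part of what must be checked here.
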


Restricting $f$ to $D$ and applying Lemma~\ref{lem:sigma-cd-coloring} shows
that $X_L$ is $\sigma$-closed discrete.

In the same model, Corral and Szeptycki prove $M_{<\omega}$ for total
ladder systems \cite[Theorem~2.3]{CS}.  Extending a ladder system on a stationary carrier to a total system
and then restricting the resulting witness to the original ladders gives
$M_{<\omega}$ and therefore $X_L\in\Delta$.
Proposition~\ref{prop:cs-positive-consistency} additionally gives the
$\sigma$-closed discreteness of $X_L$.

\subsection{Tail traces and remainders}\label{sec:trace-remainder}

The isolated-part ideal gives rise to a generalized Boolean algebra of remainder
traces, all disjoint from the carrier part of the remainder.

For the remainder analysis, consider the ideal
\[
        \mathcal I_L=\{A\subseteq D:|A\cap L_\alpha|<\omega\text{ for every }\alpha\in S\}.
\]
Membership in $\mathcal I_L$ is defined solely by the finiteness of the
intersections with individual ladders.  By contrast, the tail-diagonalization
criterion in Proposition~\ref{prop:tail-equivalence} explicitly compares the
positions of points on the active ladders with integer thresholds.

For $C\subseteq S$ and $g:C\to\omega$, define the tail trace
\[
        T(C,g)=\{\xi\in D:\text{there is }\alpha\in C\cap\supp_L(\xi)
        \text{ with }p_\alpha(\xi)\geq g(\alpha)\}.
\]
For $B\subseteq D$, the set $B\cup C\cup T(C,g)$ is open in $X_L$.  Its isolated part is
\[
        B\cup T(C,g).
\]
In this notation, Proposition~\ref{prop:tail-equivalence} says that
$X_L\in\Delta$ if and only if every evanescent pair $(B_n,C_n)$ admits
functions $g_n:C_n\to\omega$ such that
\[
        \bigcap_{n<\omega}\bigl(B_n\cup T(C_n,g_n)\bigr)=\emptyset.
\]

This restates the tail-diagonalization criterion in tail-trace notation.  It is not a remainder characterization, because decreasing carriers
and position thresholds still appear explicitly.  The compactification results instead
describe the clopen information carried by subsets of the isolated part and by
their remainder traces.

\subsubsection{Closed traces in the Stone--\v{C}ech remainder}\label{sec:closed-traces}

The ideal $\mathcal I_L$ has a direct topological meaning.  Its members are
exactly the clopen subsets of $X_L$ contained in the isolated part $D$.  Their
remainder traces form a clopen trace algebra in the remainder.

\begin{proposition}\label{prop:ideal-closed-discrete}
For $A\subseteq D$, the following are equivalent.
\begin{enumerate}[label=(\roman*),leftmargin=2em]
\item $A\in\mathcal I_L$.
\item $A$ is closed in $X_L$.
\item $A$ is closed and discrete in $X_L$.
\item $A$ is clopen in $X_L$.
\end{enumerate}
\end{proposition}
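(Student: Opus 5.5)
I will prove the equivalences in a cycle, using only the local structure of $X_L$: points of $D$ are isolated, and each $\alpha\in S$ has the local base $N(\alpha,m)$. The key observation, used repeatedly, is that for $A\subseteq D$ and $\alpha\in S$,
\[
        \alpha\in\cl_{X_L}A
        \quad\Longleftrightarrow\quad
        |A\cap L_\alpha|=\aleph_0 .
\]
Indeed, $N(\alpha,m)\cap A=\{\lambda_\alpha(n):n\ge m\}\cap A$, and this set is nonempty for every $m$ exactly when $A\cap L_\alpha$ is infinite. Since every point of $D$ is isolated, the closure of $A$ adds no points of $D$. Hence $\cl_{X_L}A=A\cup\{\alpha\in S:|A\cap L_\alpha|=\aleph_0\}$.

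(i)$\Rightarrow$(iii): if $A\in\mathcal I_L$, the observation gives $\cl A=A$, so $A$ is closed. $A$ is discrete because it consists of isolated points of $X_L$.

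(iii)$\Rightarrow$(ii) is trivial.

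(ii)$\Rightarrow$(i): if $A$ is closed, then no $\alpha\in S$ lies in $\cl A$, so every $A\cap L_\alpha$ is finite.

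(ii)$\Leftrightarrow$(iv): any $A\subseteq D$ is open, being a union of isolated points. So for subsets of $D$, closed and clopen coincide.

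I do not expect a genuine obstacle. The only point needing care is the closure computation, specifically that the tails $\{\lambda_\alpha(n):n\ge m\}$ meet $A$ for all $m$ precisely when $A\cap L_\alpha$ is infinite. This uses only that the enumeration $\lambda_\alpha$ is injective. Nothing about positions, carriers, or stationarity enters, so the proposition holds for arbitrary $S\subseteq E^{\omegaone}_\omega$.
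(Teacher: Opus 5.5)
Your proof is correct and takes essentially the same route as the paper: both use that every subset of $D$ is automatically open and discrete, and both identify closedness with finiteness of every $A\cap L_\alpha$ via the basic tails $N(\alpha,m)$. Your explicit closure formula simply packages into one statement the two directions that the paper argues separately.
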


\begin{proof}
Since each point of $D$ is isolated, every subset of $D$ is open and discrete.  Hence it remains to determine which subsets of $D$ are closed.

Suppose first that $A\in\mathcal I_L$.  Let $\alpha\in S$.  Then
$A\cap L_\alpha$ is finite, so some tail $N(\alpha,m)$ misses $A$.  Thus no
point of $S$ lies in $\cl_{X_L}A$.  Since points of $D$ are isolated,
$\cl_{X_L}A=A$.  Hence $A$ is closed.  As $A\subseteq D$ is open and discrete,
it is clopen.

Conversely, suppose that $A\subseteq D$ is closed in $X_L$.  If
$A\cap L_\alpha$ were infinite for some $\alpha\in S$, then every neighborhood of
$\alpha$ would meet $A$, so $\alpha\in\cl_{X_L}A$.  Since $A$ is closed, this would give
$\alpha\in A\subseteq D$, contradicting $\alpha\in S$.  Hence $A\cap L_\alpha$ is finite for
every $\alpha\in S$, so $A\in\mathcal I_L$.
\end{proof}

Every infinite $A\in\mathcal I_L$ therefore determines a subspace of
the remainder homeomorphic to $\beta A\setminus A$.

\begin{corollary}\label{cor:ideal-remainder-subspace}
If $A\in\mathcal I_L$ is infinite, then
\[
        \cl_{\beta X_L}A\cong\beta A,
        \qquad
        \cl_{\beta X_L}A\setminus A\subseteq\beta X_L\setminus X_L.
\]
Hence the remainder of $X_L$ contains a subspace homeomorphic to
$\beta A\setminus A$.
\end{corollary}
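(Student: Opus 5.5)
The argument rests on the standard Stone--\v{C}ech fact that for a Tychonoff space $X$ and a clopen subset $A\subseteq X$, the closure $\cl_{\beta X}A$ is clopen in $\beta X$ and canonically homeomorphic to $\beta A$. By Proposition~\ref{prop:ideal-closed-discrete}, every $A\in\mathcal I_L$ is clopen in $X_L$, so this fact applies.

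To prove the fact, take the characteristic function $\chi_A:X_L\to\{0,1\}$. It is continuous because $A$ is clopen, so it extends to a continuous $\beta\chi_A:\beta X_L\to\{0,1\}$. Put $K=(\beta\chi_A)^{-1}(1)$. Then $K$ is clopen in $\beta X_L$ and $K\cap X_L=A$. Since $A$ is dense in $K$, we get $K=\cl_{\beta X_L}A$.

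Next we show that $K$ is a compactification of $A$ with the extension property. Let $g:A\to[0,1]$ be continuous. Extend it by $0$ on $X_L\setminus A$; the result is continuous on $X_L$ because $A$ is clopen. It therefore extends continuously over $\beta X_L$, and restricting to $K$ gives a continuous extension of $g$ over $K$. Thus $K$ is a compactification of $A$ in which $A$ is $C^\ast$-embedded, and uniqueness of $\beta A$ (as in \cite{GJ,Walker}) gives $K\cong\beta A$ via a homeomorphism fixing $A$. Note that $A$ is discrete, so $\beta A$ is just the Stone--\v{C}ech compactification of a discrete set.

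For the remainder inclusion, we show $K\cap X_L=A$, which was already noted from the definition of $K$ via $\beta\chi_A$. Directly: a point of $X_L\setminus A$ lies in the open set $X_L\setminus A$, which has the form $U\cap X_L$ for some $U$ open in $\beta X_L$ missing $A$. Hence $K\setminus A\subseteq\beta X_L\setminus X_L$. Under the homeomorphism, $K\setminus A$ corresponds to $\beta A\setminus A$, which gives the final assertion. Infiniteness of $A$ is used only so that this remainder is nonempty. The only real point is the $C^\ast$-embedding step, and clopenness makes it immediate; there is no substantial obstacle.
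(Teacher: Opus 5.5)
Your proof is correct and follows the paper's route. Clopenness of $A$ from Proposition~\ref{prop:ideal-closed-discrete} gives $C^{*}$-embeddedness and hence $\cl_{\beta X_L}A\cong\beta A$, and closedness of $A$ in $X_L$ places $\cl_{\beta X_L}A\setminus A$ in the remainder; the only difference is that you verify the identification with $\beta A$ by hand via $\beta\chi_A$ (density of $A$ in $K$ holds because $K$ is open and $X_L$ is dense in $\beta X_L$), whereas the paper cites Gillman--Jerison, Theorem~6.9(a).
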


\begin{proof}
By Proposition~\ref{prop:ideal-closed-discrete}, the set $A$ is clopen and
therefore $C^*$-embedded in $X_L$.  By \cite[Theorem~6.9(a)]{GJ}, $\cl_{\beta X_L}A\cong\beta A$.  Because $A$ is closed in $X_L$, every point of $\cl_{\beta X_L}A\setminus A$ lies outside $X_L$ and hence in $\beta X_L\setminus X_L$.
\end{proof}

\begin{proposition}\label{prop:large-remainder}
If the carrier $S$ is uncountable, then $X_L$ is neither Lindel\"of nor
pseudocompact, and the remainder $\beta X_L\setminus X_L$ contains a subspace
homeomorphic to $\omega^*=\beta\omega\setminus\omega$.
\end{proposition}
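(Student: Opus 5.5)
The plan has three parts: find an infinite closed discrete set in the isolated part, use it for $\omega^*$ and for pseudocompactness, and use the carrier for Lindel\"ofness.

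\emph{Step 1: an infinite member of $\mathcal I_L$.} I will produce an infinite $A\in\mathcal I_L$ with $A\subseteq D$. First I use uncountability of $S$ to choose a strictly increasing sequence $\alpha_0<\alpha_1<\cdots$ in $S$. Then, recursively, I pick $\xi_k\in L_{\alpha_k}$ with $\xi_k>\alpha_{k-1}$ (with $\alpha_{-1}=0$); this is possible because $L_{\alpha_k}$ is cofinal in $\alpha_k$. Put $A=\{\xi_k:k<\omega\}$. To see $A\in\mathcal I_L$, fix $\alpha\in S$ and let $\delta=\sup_k\alpha_k$.
\begin{itemize}
\item If $\alpha<\delta$, then $L_\alpha\subseteq\alpha$. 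Since $\xi_k>\alpha_{k-1}\ge\alpha$ for all large $k$, only finitely many $\xi_k$ lie in $L_\alpha$.
\item If $\alpha\ge\delta$, the $\xi_k$ may form a cofinal sequence in $\delta$, and $\alpha=\delta$ is a genuine risk.
\end{itemize}
I expect this second case to be the main obstacle. To handle it, I will choose the $\xi_k$ to avoid $L_\delta$ as well. So, before building $A$, I fix $\delta=\sup_k\alpha_k$; note that $\delta$ is a countable limit ordinal. Then I choose $\xi_k\in L_{\alpha_k}\setminus L_\delta$ above $\alpha_{k-1}$. This is possible since $L_{\alpha_k}\cap L_\delta$ is finite when $\alpha_k\ne\delta$, which holds because $\alpha_k<\delta$.

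With this choice, any $\alpha>\delta$ has $L_\alpha\cap\delta$ finite, because $L_\alpha$ is increasing and cofinal in $\alpha>\delta$. Since $A\subseteq\delta$, the intersection $A\cap L_\alpha$ is finite. For $\alpha=\delta$ it is empty, and for $\alpha<\delta$ it is finite by the first bullet. So $A\in\mathcal I_L$.

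\emph{Step 2: $\omega^*$ and pseudocompactness.} Since $A$ is a countably infinite member of $\mathcal I_L$, it is homeomorphic to $\omega$. Corollary~\ref{cor:ideal-remainder-subspace} then gives a copy of $\beta A\setminus A\cong\omega^*$ inside $\beta X_L\setminus X_L$. By Proposition~\ref{prop:ideal-closed-discrete}, $A$ is clopen in $X_L$. Hence the function equal to $k$ at $\xi_k$ and $0$ elsewhere is continuous and unbounded, so $X_L$ is not pseudocompact.

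\emph{Step 3: Lindel\"ofness.} As noted in the proof of Lemma~\ref{lem:sigma-cd-coloring}, $S$ is closed and discrete in $X_L$. Since $S$ is uncountable, $X_L$ contains an uncountable closed discrete subspace, and therefore $X_L$ is not Lindel\"of. Concretely, the cover consisting of the sets $N(\alpha,0)$, $\alpha\in S$, together with $D$, has no countable subcover, because each member contains at most one point of $S$.
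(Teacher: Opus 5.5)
Your proof is correct and matches the paper's argument step for step: you choose $\xi_k\in L_{\alpha_k}$ above $\alpha_{k-1}$ and off $L_\delta$, check the three cases $\alpha<\delta$, $\alpha=\delta$, $\alpha>\delta$ to get $A\in\mathcal I_L$, then apply Corollary~\ref{cor:ideal-remainder-subspace}, use a clopen unbounded function, and use the uncountable closed discrete carrier for non-Lindel\"ofness. Two details to state explicitly: the requirement $\xi_k\notin L_\delta$ is needed only when $\delta\in S$, since otherwise there is no ladder $L_\delta$; and the $\xi_k$ are strictly increasing because $\xi_{k-1}<\alpha_{k-1}<\xi_k$, so $A$ is infinite and your function $\xi_k\mapsto k$ is well defined.
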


\begin{proof}
The carrier $S$ is an uncountable closed discrete subspace of $X_L$, so
$X_L$ is not Lindel\"of.  It remains to construct a countably infinite member
of $\mathcal I_L$.  Fix a strictly increasing sequence
$\langle\alpha_n:n<\omega\rangle$ in $S$ with supremum $\delta$, and choose
points $\xi_n\in L_{\alpha_n}$ recursively so that
$\xi_{n+1}>\alpha_n$.  If $\delta\in S$,
require in addition that $\xi_n\notin L_\delta$.  This is possible because each
$L_{\alpha_n}$ is cofinal in $\alpha_n$ and
$L_{\alpha_n}\cap L_\delta$ is finite.  Put
$A=\{\xi_n:n<\omega\}$.  If $\delta\in S$, then
$A\cap L_\delta=\emptyset$.  If $\beta>\delta$, then $A\subseteq\delta$ and
$L_\beta\cap\delta$ is finite.  If $\beta<\delta$, then
$A\cap L_\beta\subseteq A\cap\beta$ is finite because
$\xi_{n+1}>\alpha_n$ and $\sup_n\alpha_n=\delta$.  Hence
$A\in\mathcal I_L$.

By Proposition~\ref{prop:ideal-closed-discrete}, the set $A$ is clopen.  Let $u:A\to\mathbb{R}$ be unbounded.  Extend $u$ to $X_L$ by setting it equal to zero on
$X_L\setminus A$.  The resulting function is continuous and unbounded on $X_L$,
so $X_L$ is not pseudocompact.  Corollary~\ref{cor:ideal-remainder-subspace} gives a subspace
of the remainder homeomorphic to $\beta A\setminus A\cong\omega^*$.
\end{proof}

Write
\[
\begin{gathered}
        X_L^*=\beta X_L\setminus X_L,
        \qquad
        A^*=\cl_{\beta X_L}A\setminus X_L
        \quad(A\subseteq X_L),\\
        A\subseteq^*B
        \quad\Longleftrightarrow\quad
        |A\setminus B|<\omega.
\end{gathered}
\]
The clopen traces form a generalized Boolean algebra in the remainder.

\begin{proposition}\label{prop:trace-algebra-embedding}
For $A,B\in\mathcal I_L$,
\[
\begin{aligned}
 A^{*}\cap B^{*}&=(A\cap B)^{*},&
 A^{*}\cup B^{*}&=(A\cup B)^{*},\\
 A^{*}\setminus B^{*}&=(A\setminus B)^{*}.
\end{aligned}
\]
Also,
\[
        A^{*}\subseteq B^{*}
        \quad\Longleftrightarrow\quad
        A\subseteq^{*}B.
\]
Consequently, $A\mapsto A^{*}$ induces an isomorphism of generalized Boolean
algebras from $\mathcal I_L/\mathrm{fin}$ onto its image in
$\operatorname{Clop}(X_L^{*})$.
\end{proposition}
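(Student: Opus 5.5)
The plan is to reduce everything to standard Stone--\v{C}ech facts for clopen subsets. By Proposition~\ref{prop:ideal-closed-discrete}, every $A\in\mathcal I_L$ is clopen in $X_L$. Hence its characteristic function $\chi_A:X_L\to\{0,1\}$ is continuous and extends to $\beta\chi_A:\beta X_L\to\{0,1\}$. Since $X_L$ is dense, we have $\cl_{\beta X_L}A=(\beta\chi_A)^{-1}(1)$, and this set is clopen in $\beta X_L$. Consequently $A^*=\cl_{\beta X_L}A\cap X_L^*$ is clopen in $X_L^*$. The map $A\mapsto\cl_{\beta X_L}A$ on clopen subsets of $X_L$ preserves finite unions, intersections and relative complements, because $\beta\chi_{A\cap B}=\beta\chi_A\cdot\beta\chi_B$ and similar identities hold by uniqueness of extensions. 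Alternatively one may cite \cite{GJ} for $\cl_{\beta X}(A\cap B)=\cl_{\beta X}A\cap\cl_{\beta X}B$ when $A,B$ are completely separated. Intersecting these identities with $X_L^*$ gives the three displayed equalities, since $\mathcal I_L$ is closed under $\cap,\cup,\setminus$.

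For the order characterization, the direction $\Leftarrow$ runs as follows. If $A\setminus B$ is finite, then it is a finite set of isolated points of $X_L$, so it is closed in $\beta X_L$. Its closure is therefore itself, and it lies inside $X_L$. Thus $(A\setminus B)^*=\emptyset$, and by the difference identity $A^*\setminus B^*=\emptyset$.

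For $\Rightarrow$, suppose $A\setminus B$ is infinite. Then $E=A\setminus B$ is an infinite member of $\mathcal I_L$, and by Corollary~\ref{cor:ideal-remainder-subspace} we have $\cl_{\beta X_L}E\cong\beta E$. The set $E$ is an infinite discrete space, hence not compact, so $\beta E\setminus E\neq\emptyset$. This gives $E^*\neq\emptyset$, and so $A^*\not\subseteq B^*$. This nonemptiness step is the only place where more than Boolean bookkeeping is used. I expect it to be the main point to check carefully, in particular that $\cl_{\beta X_L}E\setminus E$ really lies in $X_L^*$; this holds because $E$ is closed in $X_L$, as recorded in the corollary.

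Finally, the identities show that $A\mapsto A^*$ is a homomorphism of generalized Boolean algebras from $\mathcal I_L$ (with $\cup,\cap,\setminus$ and least element $\emptyset$) into $\operatorname{Clop}(X_L^*)$. By the order characterization, $A^*=B^*$ if and only if $A\triangle B$ is finite. So the kernel congruence is exactly equality modulo finite sets, and the induced map on $\mathcal I_L/\mathrm{fin}$ is well defined and injective. Since it preserves the operations, it is an isomorphism onto its image, which is closed under the three operations by the same identities.
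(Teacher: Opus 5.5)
Your proposal is correct and follows essentially the same route as the paper: you extend the characteristic functions of the clopen sets $A,B$ to $\beta X_L$ to obtain the three identities, then reduce $A^*\subseteq B^*$ to $(A\setminus B)^*=\emptyset$, which holds exactly when $A\setminus B$ is finite. One small slip: your alternative appeal to Gillman--Jerison should be stated for zero-sets such as clopen sets, where $\cl_{\beta X}(Z_1\cap Z_2)=\cl_{\beta X}Z_1\cap\cl_{\beta X}Z_2$; completely separated sets are disjoint, so as written it does not cover overlapping $A,B$, although your main characteristic-function argument does not depend on it.
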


\begin{proof}
By Proposition~\ref{prop:ideal-closed-discrete}, the sets $A$ and $B$ are
clopen in $X_L$.  Their characteristic functions extend continuously to
$\beta X_L$, and Boolean operations on the extensions give the three displayed
identities.  For a clopen discrete subset $C$ of $X_L$, $C^{*}=\emptyset$ if and only if
$C$ is finite.  Hence
\[
        A^{*}\subseteq B^{*}
        \Longleftrightarrow
        (A\setminus B)^{*}=\emptyset
        \Longleftrightarrow
        A\subseteq^{*}B.
\]
Together, these identities give the asserted isomorphism onto the image.
\end{proof}

The identities in Proposition~\ref{prop:trace-algebra-embedding} do not
involve the position thresholds $p_\alpha(\xi)$ from the $\Delta$-criteria.
Compact ladder tails, on the other hand, contribute nothing to the
Stone--\v{C}ech remainder.  For every $\alpha\in S$ and
$m<\omega$,
\[
        \cl_{\beta X_L}N(\alpha,m)=N(\alpha,m).
\]
The set $N(\alpha,m)$ is compact by Section~\ref{sec:spaces} and therefore closed in the Hausdorff space $\beta X_L$.  It follows that infinite
members of $\mathcal I_L$ have nonempty clopen remainder traces, whereas
individual compact ladder tails have none.

\subsubsection{The \texorpdfstring{$S$}{S}-part of the remainder}

Write
\[
        E_S=\cl_{\beta X_L}S\setminus S
\]
for the \emph{$S$-part} of the remainder.  Every member of the clopen trace algebra is disjoint
from $E_S$.  The carrier closure, however, retains compactification
information not seen by that algebra.  When $X_L$ is normal, $E_S$ is homeomorphic to
$\beta S\setminus S$.

\begin{proposition}\label{prop:trace-separated}
For every $A\in\mathcal I_L$, the sets $A$ and $S$ are completely separated in
$X_L$.  Hence $\cl_{\beta X_L}A\cap\cl_{\beta X_L}S=\emptyset$, and every trace
$A^{*}$ is disjoint from $E_S$.
\end{proposition}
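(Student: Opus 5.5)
The plan is to exhibit an explicit continuous function $\varphi:X_L\to[0,1]$ with $\varphi\restriction A\equiv1$ and $\varphi\restriction S\equiv0$.  The natural candidate is the characteristic function $\chi_A$ of $A$.  By Proposition~\ref{prop:ideal-closed-discrete}, every $A\in\mathcal I_L$ is clopen in $X_L$, so $\chi_A$ is continuous.  Since $A\subseteq D$ and $D\cap S=\emptyset$, the function $\chi_A$ vanishes on $S$ and equals $1$ on $A$.  This gives the complete separation of $A$ and $S$ at once.  Equivalently, $A$ and $X_L\setminus A\supseteq S$ are disjoint clopen sets.

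For the second assertion, I will use the standard fact that completely separated subsets of a Tychonoff space have disjoint closures in the Stone--\v{C}ech compactification \cite{GJ}.  To keep the argument self-contained, I will argue directly.  The function $\chi_A$ is bounded, so it has a continuous extension $\beta\chi_A:\beta X_L\to[0,1]$.  Continuity gives $\beta\chi_A\equiv1$ on $\cl_{\beta X_L}A$ and $\beta\chi_A\equiv0$ on $\cl_{\beta X_L}S$.  Hence the two closures are disjoint.

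Finally, $A^{*}=\cl_{\beta X_L}A\setminus X_L$ is contained in $\cl_{\beta X_L}A$, and $E_S$ is contained in $\cl_{\beta X_L}S$.  Therefore $A^{*}\cap E_S=\emptyset$.

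I do not expect any real obstacle.  The only step with content is the clopenness of members of $\mathcal I_L$, which is already available from Proposition~\ref{prop:ideal-closed-discrete}.  The only point needing care is that the extension property of $\beta X_L$ applies to bounded continuous functions, and $\chi_A$ is bounded.
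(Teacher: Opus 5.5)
Your proof is correct and matches the paper's argument: clopenness of $A$ from Proposition~\ref{prop:ideal-closed-discrete} makes $\chi_A$ a continuous function separating $A$ from $S$, and disjointness of the closures in $\beta X_L$ then gives $A^{*}\cap E_S=\emptyset$. Your direct check using the extension $\beta\chi_A$ just spells out the standard fact that the paper cites without proof.
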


\begin{proof}
By Proposition~\ref{prop:ideal-closed-discrete}, the set $A$ is clopen, so its
characteristic function $\chi_A:X_L\to\{0,1\}$ is continuous.  It is $1$ on $A$
and $0$ on $S$, so $A$ and $S$ are completely separated.  Completely separated
sets have disjoint closures in $\beta X_L$.  Hence
$A^{*}\subseteq\cl_{\beta X_L}A$ is disjoint from
$\cl_{\beta X_L}S\supseteq E_S$.
\end{proof}

\begin{proposition}\label{prop:delta-trace-invariant}
For every $A\in\mathcal I_L$, the set $X_L\setminus A$ is clopen and
\[
        X_L\in\Delta\quad\Longleftrightarrow\quad X_L\setminus A\in\Delta.
\]
\end{proposition}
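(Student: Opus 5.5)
By Proposition~\ref{prop:ideal-closed-discrete}, $A\in\mathcal I_L$ is clopen in $X_L$. Hence $X_L\setminus A$ is clopen as well, and
\[
        X_L=A\sqcup(X_L\setminus A)
\]
is a topological sum of two clopen pieces. The plan is to derive the equivalence entirely from the permanence facts in Section~\ref{sec:preliminaries}. These say that subspaces and topological sums of $\Delta$-spaces are $\Delta$-spaces, and that discrete spaces are $\Delta$-spaces.

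For the forward implication, suppose $X_L\in\Delta$. Then $X_L\setminus A$ is a subspace of $X_L$. The subspace permanence applies: intersect each witnessing open expansion with $X_L\setminus A$. This gives $X_L\setminus A\in\Delta$.

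For the converse, suppose $X_L\setminus A\in\Delta$. The set $A$ is a subset of $D$, and with its subspace topology it is discrete, hence a $\Delta$-space. Because both $A$ and $X_L\setminus A$ are open in $X_L$, the space $X_L$ is their topological sum. The sum permanence then gives $X_L\in\Delta$.

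If one wants the sum step spelled out, take a decreasing sequence $(A_n)$ with empty intersection. Expand $(A_n\cap A)$ inside $A$ and $(A_n\setminus A)$ inside $X_L\setminus A$. Then take unions of the two expansions: they are open in $X_L$ because both pieces are open, and their intersection is empty.

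The only point requiring care is clopenness of $A$, so that the decomposition really is a topological sum; Proposition~\ref{prop:ideal-closed-discrete} already supplies this. An optional alternative is to note that $X_L\setminus A$ is the ladder-system-type space on $S$ with ladders $L_\alpha\setminus A$, which are tail equivalent to $L$. Then one can argue via Proposition~\ref{prop:tail-equivalence}, but the sum argument is simpler and I would use that.
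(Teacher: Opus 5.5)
Your proof is correct and is essentially the paper's argument. Clopenness of $A$ from Proposition~\ref{prop:ideal-closed-discrete} gives $X_L=A\oplus(X_L\setminus A)$; subspace permanence gives the forward direction; discreteness of $A$ together with topological-sum permanence gives the converse. Your optional tail-equivalence remark would need adjusting, because Proposition~\ref{prop:tail-equivalence-homeomorphism} compares systems with the same isolated part $D$ and underlying set $\omegaone$, but you rightly do not rely on it.
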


\begin{proof}
Proposition~\ref{prop:ideal-closed-discrete} shows that $A$ is clopen, so
$X_L=A\oplus(X_L\setminus A)$.  The forward implication follows from subspace
permanence.  For the converse, $A$ is discrete and hence a $\Delta$-space, so
topological-sum permanence applies.
\end{proof}

By \cite[Theorem~6.9(a)]{GJ}, the inclusion $S\hookrightarrow X_L$
induces a homeomorphism
\[
        \beta S\longrightarrow\cl_{\beta X_L}S
\]
if and only if $S$ is $C^{*}$-embedded in $X_L$.

The connection between constant-function uniformization and normality of
ladder-system spaces is described as folklore in \cite[p.~189]{BEGPS}.  The
remainder analysis uses the equivalent formulation in terms of the
$C^{*}$-embedding of the carrier.

\begin{proposition}\label{prop:S-remainder-normal}
The subspace $S$ is $C^{*}$-embedded in $X_L$ if and only if $X_L$ is normal.
Consequently, the map $\beta S\to\cl_{\beta X_L}S$ induced by inclusion is a
homeomorphism if and only if $X_L$ is normal.  When $X_L$ is not normal,
$\cl_{\beta X_L}S$ is a proper compactification of the
discrete space $S$ below $\beta S$, and $E_S$ is its remainder rather than the
full Stone--\v{C}ech remainder $\beta S\setminus S$.
\end{proposition}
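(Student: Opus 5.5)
The plan is to prove directly that $S$ is $C^{*}$-embedded in $X_L$ if and only if $X_L$ is normal, and then read off the remaining two sentences from \cite[Theorem~6.9(a)]{GJ}, as recalled just before the statement.

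For the forward direction, assume $S$ is $C^{*}$-embedded. The first step is to reduce normality of $X_L$ to separating pairs of disjoint subsets of $S$. Let $F,G$ be disjoint closed sets. Each point of $D$ is isolated, and each $\alpha\in F\cap S$ has a compact open neighborhood $N(\alpha,m)$ missing $G$. Indeed, $G$ is closed, $\alpha\notin G$, and the sets $N(\alpha,m)$ form a local base. The sets $F\cap S$ and $G\cap S$ are disjoint subsets of the discrete closed set $S$. The characteristic function of $F\cap S$ on $S$ is bounded and continuous, so it extends to a continuous $\phi:X_L\to[0,1]$. Then $U=\phi^{-1}(1/2,1]$ and $V=\phi^{-1}[0,1/2)$ are disjoint open sets with $F\cap S\subseteq U$ and $G\cap S\subseteq V$. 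Next I shrink to $U'=U\cap\bigcup_{\alpha\in F\cap S}N(\alpha,m_\alpha)$, taking each $N(\alpha,m_\alpha)$ disjoint from $G$, and do the same for $V'$. Finally I adjoin the isolated points: put $U''=U'\cup(F\cap D)\setminus\cl V'$ and $V''=V'\cup(G\cap D)\setminus\cl U'$. The main point to check here is that $\cl U'\cap G=\emptyset$. The closure of each $N(\alpha,m)$ is itself, but the union of these neighborhoods need not be closed, since a carrier point $\beta$ could be a limit of points from infinitely many of them. Such a $\beta$ lies in $U$ or in $V$, though, and I will use the open sets $U$, $V$ to rule this out: a $\beta\in G\cap S$ lies in the open set $V$, which is disjoint from $U\supseteq U'$.

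For the converse, assume $X_L$ is normal. Since $S$ is closed and discrete, every bounded function $S\to\mathbb R$ is continuous, and the Tietze extension theorem extends it to a bounded continuous function on $X_L$. Hence $S$ is $C^{*}$-embedded. This direction is routine.

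For the consequences, \cite[Theorem~6.9(a)]{GJ} turns the equivalence into the statement about $\beta S\to\cl_{\beta X_L}S$. When $X_L$ is not normal, $\cl_{\beta X_L}S$ is still a compactification of $S$, because it is compact and contains $S$ as a dense subset. Since $S$ is discrete, this compactification lies below $\beta S$, the maximal one. It is proper because the induced map is not a homeomorphism. Its remainder is $E_S$ by definition.

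The hardest step is the normality argument in the forward direction, in particular the closure bookkeeping for the unions of ladder tails. The fallback is to work in the standard way: separating disjoint subsets of $S$ by open sets is equivalent, for $\Psi$-like spaces, to normality. The reason is that closed sets meeting only $D$ can be absorbed using compact ladder neighborhoods together with the clopen sets of Proposition~\ref{prop:ideal-closed-discrete}.
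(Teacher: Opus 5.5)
Your proof is correct and follows essentially the paper's route. Both use Tietze for normal $\Rightarrow$ $C^{*}$-embedded. For the converse, both extend the indicator of one closed set's carrier part to separate the two $S$-parts, then patch in the isolated points using basic neighborhoods $N(\alpha,m)$ that miss the other closed set, and both finish with the Gillman--Jerison criterion for the compactification statements.

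The difference is cosmetic. The paper thresholds the extension at $1/2$ to get a single clopen $V$ and then adds $K_D$ and removes $H_D$. You instead use the two disjoint open preimages of $\phi$, intersected with unions of neighborhoods avoiding the opposite closed set. Two small touches: the extension should be clamped into $[0,1]$, and $\cl_{\beta X_L}S$ lies below $\beta S$ by maximality of $\beta S$, not because $S$ is discrete.
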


\begin{proof}
If $X_L$ is normal, then the closed set $S$ is $C^{*}$-embedded by the Tietze
extension theorem.

Conversely, suppose $S$ is $C^{*}$-embedded.  Let $H$ and $K$ be disjoint closed subsets of $X_L$.  It suffices to separate them by disjoint open sets.  Put $H_S=H\cap S$, $K_S=K\cap S$,
$H_D=H\cap D$, and $K_D=K\cap D$.  Define $f:S\to\{0,1\}$ by $f=1$ on $K_S$ and
$f=0$ elsewhere on $S$, and extend it to a continuous $g:X_L\to[0,1]$ by
$C^{*}$-embeddedness.  For each $\alpha\in S$,
$g(\lambda_\alpha(n))\to g(\alpha)=f(\alpha)\in\{0,1\}$.  For $\xi\in D$, define
$F(\xi)=1$ if $g(\xi)\ge\tfrac12$ and $F(\xi)=0$ otherwise.  Then $F\restriction L_\alpha$ is eventually constant with value $f(\alpha)$ for every $\alpha\in S$.  Set
\[
        V=\{\xi\in D:F(\xi)=1\}\cup\{\alpha\in S:f(\alpha)=1\}.
\]
The eventual constancy of $F\restriction L_\alpha$ with value $f(\alpha)$ on every ladder implies that $V$ is clopen.  Moreover $K_S\subseteq V$ and
$H_S\cap V=\emptyset$.

The sets $H_D,K_D\subseteq D$ consist of isolated points and are disjoint.  Define
\[
        U_K=(V\cup K_D)\setminus H_D,\qquad U_H=((X_L\setminus V)\cup H_D)\setminus K_D.
\]
The sets $U_K$ and $U_H$ contain $K$ and $H$, respectively, and are disjoint.
$K_S\subseteq V\subseteq U_K$ and $K_D\subseteq U_K$.  Dually,
$H_S\subseteq X_L\setminus V\subseteq U_H$ and $H_D\subseteq U_H$.  Every point of $K_D$ is excluded from $U_H$, and every point of $H_D$ is
excluded from $U_K$.  Any remaining common point would have to lie in both $V$ and
$X_L\setminus V$.

It remains to see that $U_K$ and $U_H$ are open.  Every point of
$D$ is isolated, so it suffices to verify openness at the non-isolated points
of $U_K$ and $U_H$.  If $\alpha\in S\cap U_K$, then $\alpha\in V$.  Since $V$ is clopen, there is $m_0<\omega$ with
$N(\alpha,m_0)\subseteq V$.  Because $H_S\cap V=\emptyset$, $\alpha\notin H_S$.  Since $H$ is closed and $H_D\subseteq H$,
$\alpha\notin\cl_{X_L}H_D$, so there is $m_1<\omega$ with
$N(\alpha,m_1)\cap H_D=\emptyset$.  For
$m=\max\{m_0,m_1\}$,
\[
        N(\alpha,m)\subseteq V\setminus H_D\subseteq U_K.
\]
Symmetrically, if $\alpha\in S\cap U_H$, then
$\alpha\in X_L\setminus V$.  Since $X_L\setminus V$ is clopen, there is $m_0<\omega$ with
$N(\alpha,m_0)\subseteq X_L\setminus V$.  Because $K_S\subseteq V$, $\alpha\notin K_S$.  Since $K$ is closed and $K_D\subseteq K$,
$\alpha\notin\cl_{X_L}K_D$, so there is $m_1<\omega$ with
$N(\alpha,m_1)\cap K_D=\emptyset$.  For
$m=\max\{m_0,m_1\}$,
\[
        N(\alpha,m)\subseteq (X_L\setminus V)\setminus K_D\subseteq U_H.
\]
Thus $U_K$ and $U_H$ are open, and $X_L$ is normal.

The remaining compactification statements follow from the equivalence between normality and $C^{*}$-embeddedness and from the universal
property of $\beta S$.  If $S$ is not $C^{*}$-embedded in $X_L$, the compactification
$\cl_{\beta X_L}S$ is strictly below $\beta S$.
\end{proof}

Proposition~\ref{prop:trace-algebra-embedding} describes the clopen trace
algebra.  Proposition~\ref{prop:trace-separated} shows that its members are disjoint
from the $S$-part $E_S$, while Proposition~\ref{prop:S-remainder-normal}
determines that part in the normal case.  If $X_L$ is normal, then
$E_S\cong\beta S\setminus S$.  Hence, among normal ladder-system spaces whose
carriers have the same cardinality, the homeomorphism type of $E_S$ is
independent of the particular ladder system $L$.  No remainder
characterization of the $\Delta$-property is asserted here.

\subsection{The ideal of \texorpdfstring{$\sigma$}{sigma}-closed-discrete carriers}\label{sec:cC}

Unlike FTM and $\sigma$-FTM, $\sigma$-closed discreteness can already hold on
a stationary carrier in ZFC, as
Proposition~\ref{prop:common-first-point-example} shows.  For a subcarrier
$A\subseteq S$, the restricted form of Lemma~\ref{lem:sigma-cd-coloring}
characterizes the $\sigma$-closed discreteness of $X_A$ in the sense of
Definition~\ref{def:sigma-cd}.  Write each $L_\alpha\subseteq D$ in increasing order as
$\{\lambda_\alpha(j):j<\omega\}$, and write $p_\alpha(\xi)=j$ when
$\xi=\lambda_\alpha(j)$.

\begin{definition}\label{def:cC}
A set $A\subseteq S$ is a \emph{$\sigma$-closed-discrete carrier},
written $A\in\cC_L$, if the restricted ladder-system space $X_A$ is $\sigma$-closed discrete.
Since the ladders indexed by $A$ lie in $D_A$, the restricted form of
Lemma~\ref{lem:sigma-cd-coloring} gives
\[
   A\in\cC_L\Longleftrightarrow
   \exists\,c:D_A\to\omega\ \text{such that }c\restriction L_\alpha
   \text{ is finite-to-one for every }\alpha\in A.
\]
Equivalently, a witness may be taken to be a map $c:D\to\omega$, because a witness on $D_A$
extends arbitrarily to $D$, while a global witness restricts to $D_A$.
The full space $X_L$ is $\sigma$-closed discrete if and only if $S\in\cC_L$.
The terms ``ideal'' and ``$\sigma$-ideal'' are used without a properness
requirement.  Properness is stated separately when relevant.
\end{definition}

In terms of the ladder enumerations, $A\in\cC_L$ if and only if there is
$c:D\to\omega$ such that $c(\lambda_\alpha(j))\to\infty$ as $j\to\infty$
for every $\alpha\in A$.  The restriction $c\restriction L_\alpha$ is finite-to-one if and only if, for each $K$, only finitely many
points of $L_\alpha$ have color at most $K$.  Hence $A\in\cC_L$ exactly
when a single map $c:D\to\omega$ tends to infinity along every ladder indexed
by $A$.

\begin{proposition}\label{prop:cC-ideal}
$\cC_L$ is downward closed and closed under finite unions, and
$[S]^{\le\aleph_0}\subseteq\cC_L$.  Moreover, $\cC_L=\mathcal P(S)$ if and only if $L$ is
$\sigma$-closed discrete, and $\cC_L$ is a proper ideal if and only if $L$ is not
$\sigma$-closed discrete.
\end{proposition}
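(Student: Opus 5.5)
The plan is to argue throughout with the coloring characterization recorded after Definition~\ref{def:cC}: $A\in\cC_L$ exactly when some $c:D\to\omega$ satisfies $c(\lambda_\alpha(j))\to\infty$ as $j\to\infty$ for every $\alpha\in A$. Each assertion of Proposition~\ref{prop:cC-ideal} then becomes a statement about such colorings.

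Downward closure is immediate. A witness $c$ for $A$ also witnesses every $B\subseteq A$, since the requirement is imposed ladder by ladder.

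For finite unions, take witnesses $c_0$ for $A_0$ and $c_1$ for $A_1$, and set $c=\min(c_0,c_1)$. Fix $\alpha\in A_0\cup A_1$, say $\alpha\in A_0$; the other case is symmetric. We know $c_0\to\infty$ along $L_\alpha$, but $c_1$ need not. So the minimum is the wrong choice, and the step to get right is the combination rule. The fix is to use $c=\max(c_0,c_1)$. For each $K$, the set $\{\xi\in L_\alpha:c(\xi)\le K\}$ is contained in $\{\xi\in L_\alpha:c_0(\xi)\le K\}$, which is finite because $\alpha\in A_0$. Hence $c\restriction L_\alpha$ is finite-to-one, and $c$ witnesses $A_0\cup A_1\in\cC_L$. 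This is the only point that needs care; with $\max$ it is routine.

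For countable sets, let $A=\{\alpha_k:k<\omega\}$ be countable (allowing repetitions, or finite). Define $c(\xi)=\min\{k:\xi\in L_{\alpha_k}\}$ for $\xi\in D_A$, and extend it arbitrarily, say by $0$, to $D$. Fix $\alpha=\alpha_k$ and a bound $K$. A point $\xi\in L_\alpha$ with $c(\xi)\le K$ lies in $L_{\alpha_i}$ for some $i\le K$. For $\alpha_i\neq\alpha$, the intersection $L_\alpha\cap L_{\alpha_i}$ is finite, because distinct ordinal ladders meet finitely. It follows that only finitely many points of $L_\alpha$ have color at most $K$, except for the points of $L_\alpha$ whose color is at least $k$. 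Those are at most $k$ in number, or more precisely their color equals $k$ once earlier ladders are avoided. Refining the count, every $\xi\in L_\alpha$ has $c(\xi)\le k$, so the color values are bounded and the argument fails as stated. I would therefore switch to position coding: set $c(\xi)=\max\{k\le m:\dots\}$, or more simply, for $\xi\in D_A$ let $c(\xi)$ be the least $j$ such that $\xi=\lambda_{\alpha_k}(j)$ for some $k\le j$. On $L_{\alpha_k}$, points with $c(\xi)\le K$ are either among the first $\max(K,k)+1$ points of $L_{\alpha_k}$, or lie in $L_{\alpha_i}$ with $i\le K$ and $\alpha_i\neq\alpha_k$. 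Both sets are finite, so $c$ is finite-to-one on each ladder. Alternatively, I could cite that countable ladder-system spaces are countable, hence trivially $\sigma$-closed discrete, since they are countable unions of singletons, which are closed in a $T_1$ space. This second route is the cleaner proof and the one I would write.

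Finally, the two equivalences follow directly from the definitions. By definition, $S\in\cC_L$ if and only if $L$ is $\sigma$-closed discrete, and by downward closure $S\in\cC_L$ holds exactly when $\cC_L=\mathcal P(S)$. An ideal is proper precisely when it omits $S$, which gives the last assertion.
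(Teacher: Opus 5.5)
Your final argument is correct and essentially the paper's. $\max(c_0,c_1)$ does the same job as the paper's pairing $\langle c_A(\xi),c_B(\xi)\rangle$ with $\langle a,b\rangle\ge\max(a,b)$, covering the countable space $X_A$ by closed singletons is the same as the paper's injection $D_A\to\omega$, and the two equivalences are handled identically. In the write-up, drop the abandoned $\min$ and first-index attempts and the position coding, which as stated is undefined at points lying only at positions $j<k$ of ladders $L_{\alpha_k}$.
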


\begin{proof}
A witness for $A\in\cC_L$ also witnesses $B\in\cC_L$ whenever $B\subseteq A$, so $\cC_L$ is downward closed.  For finite unions, let $c_A$ and $c_B$ witness $A\in\cC_L$ and $B\in\cC_L$, respectively.  Choose a
pairing function $\langle\cdot,\cdot\rangle:\omega^2\to\omega$ with
$\langle a,b\rangle\ge\max(a,b)$, and put
$c(\xi)=\langle c_A(\xi),c_B(\xi)\rangle$.  Fix $\alpha\in A$ and $K<\omega$.  If $\xi\in L_\alpha$ and $c(\xi)\le K$, then
$c_A(\xi)\le K$, and there are only finitely many such $\xi$.  The same argument, with $c_B$, applies to every $\alpha\in B$.  Thus $c$ witnesses $A\cup B\in\cC_L$.

For countable $A$, $D_A=\bigcup_{\alpha\in A}L_\alpha$ is countable.  Any
injection $c:D_A\to\omega$ is finite-to-one on every $L_\alpha$, so
$A\in\cC_L$.

Finally, any witness for $S$ also works for every $A\subseteq S$.  Hence
$\cC_L=\mathcal P(S)$ if and only if $S\in\cC_L$,
that is, if and only if $L$ is $\sigma$-closed discrete.  Thus $\cC_L$ is proper if and only if $L$ is not
$\sigma$-closed discrete.
\end{proof}

The ideal $\cC_L$ differs from both the isolated-part ideal $\mathcal I_L$ and the club-small decomposition ideal
$\mathcal J_{\rm cb}$.  The ideal $\mathcal I_L$ is defined on $D$ and
controls clopen remainder traces, whereas $\cC_L$ and
$\mathcal J_{\rm cb}$ are ideals on $S$ with different defining conditions.
For the system of Proposition~\ref{prop:common-first-point-example}, the
stationary carrier $S$ belongs to $\cC_L$ because the full space is
$\sigma$-closed discrete, but $S$ does not belong to
$\mathcal J_{\rm cb}$.

\begin{proposition}\label{prop:nonstationary-cC}
If $A\subseteq S$ is nonstationary, then $A\in\cC_L$.  Hence
\[
        \mathcal J^\ast=\mathrm{NS}\restriction S\subseteq\cC_L.
\]
\end{proposition}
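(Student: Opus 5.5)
The plan is to reduce to Corollary~\ref{cor:nonstationary-tail-equivalences}, applied to the restricted ladder system $L\restriction A$. That corollary is stated for an arbitrary ladder system on a carrier contained in $E^{\omegaone}_\omega$. It therefore applies verbatim to $L\restriction A$, whose carrier is $A$ and whose isolated part is $D_A$. Since $A$ is nonstationary, condition~(1) holds for this restricted system. The final clause of the corollary then says that $X_{L\restriction A}=X_A$ is $\sigma$-closed discrete, which is exactly $A\in\cC_L$ by Definition~\ref{def:cC}.

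To keep the argument self-contained at the level of colorings, I would also record the direct witness. Proposition~\ref{prop:tail-multiplicity-ideals}(i), applied to $A$, supplies $h_A:A\to\omega$. With this function the thinned tails $\{\lambda_\alpha(j):j\ge h_A(\alpha)\}$ for $\alpha\in A$ are pairwise disjoint. Define $c:D_A\to\omega$ as follows.
\begin{itemize}
\item If $\xi$ lies on the thinned tail of $\alpha$, set $c(\xi)=p_\alpha(\xi)$. This is well defined because the tails are disjoint, so $\alpha$ is unique.
\item Otherwise set $c(\xi)=0$.
\end{itemize}
Fix $\alpha\in A$. On the thinned tail of $\alpha$, the map $c$ is the injective position map. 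Only the finitely many points $\lambda_\alpha(j)$ with $j<h_A(\alpha)$ can receive other values. Hence $c\restriction L_\alpha$ is finite-to-one, and the displayed criterion in Definition~\ref{def:cC} gives $A\in\cC_L$.

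For the displayed inclusion, Proposition~\ref{prop:tail-multiplicity-ideals} gives $\mathcal J^\ast=\mathrm{NS}\restriction S$. Combining this with the first part yields $\mathcal J^\ast\subseteq\cC_L$.

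The only point requiring care is that the gap-by-gap disjoint-tail construction in Proposition~\ref{prop:tail-multiplicity-ideals}(i) concerns the ladders $L_\alpha$ themselves, for $\alpha\in A$. It therefore makes no difference whether one works in $L$ or in $L\restriction A$, because positions are unchanged under restriction. I do not expect any genuine obstacle.
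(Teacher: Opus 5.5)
Your proof is correct, but it builds the coloring differently from the paper.

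You pass through the disjoint-tail thinning of Proposition~\ref{prop:tail-multiplicity-ideals}(i) and color each point of a thinned tail by its position. This is exactly the mechanism behind the last clause of Corollary~\ref{cor:nonstationary-tail-equivalences}.

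The paper skips the thinning. It fixes a club $C=\{\gamma_\eta:\eta<\omega_1\}$ disjoint from $A$ with $0\in C$. It then chooses an injection of each countable gap $D\cap[\gamma_\eta,\gamma_{\eta+1})$ into $\omega$ and glues these into a single map $c:D\to\omega$. Each $\alpha\in A$ satisfies $\gamma_\eta<\alpha<\gamma_{\eta+1}$ for some $\eta$, so a tail of $L_\alpha$ lies in that gap, where $c$ is injective.

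The paper's construction is more elementary. It needs no recursion inside a gap and never uses the fact that distinct ladders meet in finitely many points. Its coloring also depends only on $C$ and not on the ladders, so one map witnesses membership in $\cC_L$ for every subcarrier disjoint from $C$ at once.

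Your version is shorter given the earlier results, and it makes the link to the FTM characterization explicit. Both arguments use Proposition~\ref{prop:tail-multiplicity-ideals} in the same way to get the identity $\mathcal J^\ast=\mathrm{NS}\restriction S$.
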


\begin{proof}
Choose a club $C\subseteq\omegaone$ with $C\cap A=\emptyset$, and, if
necessary, add $0$ to $C$.  Enumerate $C$ increasingly and continuously as
$C=\{\gamma_\eta:\eta<\omegaone\}$.  For each $\eta<\omegaone$ choose an injection
\[
        e_\eta:D\cap[\gamma_\eta,\gamma_{\eta+1})\longrightarrow\omega.
\]
Such an injection exists because each interval below $\omegaone$ is countable.  Define
$c:D\to\omega$ by $c(\xi)=e_\eta(\xi)$ when
$\xi\in D\cap[\gamma_\eta,\gamma_{\eta+1})$.

Fix $\alpha\in A$, and let $\eta$ be the unique index such that
$\alpha\in[\gamma_\eta,\gamma_{\eta+1})$.  Since $\alpha\notin C$,
$\gamma_\eta<\alpha<\gamma_{\eta+1}$.  Choose $h(\alpha)<\omega$ with
$\lambda_\alpha(j)>\gamma_\eta$ for all $j\ge h(\alpha)$.  Then the tail
$\{\lambda_\alpha(j):j\ge h(\alpha)\}$ lies in
$D\cap[\gamma_\eta,\gamma_{\eta+1})$, where $c=e_\eta$ is injective.  Because only finitely many initial points lie outside this tail,
$c\restriction L_\alpha$ is finite-to-one.  Thus $c$ witnesses $A\in\cC_L$ by
Definition~\ref{def:cC}.  The inclusion $\mathcal J^\ast=\mathrm{NS}\restriction S\subseteq\cC_L$ then follows from Proposition~\ref{prop:tail-multiplicity-ideals}.
\end{proof}

\subsubsection{\texorpdfstring{$\sigma$}{sigma}-additivity and the threshold criterion}

Let $A=\bigcup_n A_n$, where $(A_n)$ is increasing and $A_n\in\cC_L$ for every $n$.  For each
$n$, choose a witness $c_n$ for $A_n$, and, for $\alpha\in A$, put
$n(\alpha)=\min\{n:\alpha\in A_n\}$.  Then
$c_\ell\restriction L_\alpha$ is finite-to-one whenever
$\ell\ge n(\alpha)$.  A threshold function will select the local
coloring used at each isolated point.

\begin{definition}\label{def:U}
For $g:S\to\omega$, the \emph{threshold condition} $U(g)$ asserts that there is
$m:D\to\omega$ such that $\{\xi\in L_\alpha:m(\xi)<g(\alpha)\}$ is finite
for every $\alpha\in S$.
\end{definition}

Fact~4.2 of Carvalho, Inamdar, and Rinot concerns ladder systems on
stationary carriers.  In that setting, $(\forall g:S\to\omega)\,U(g)$ is the
strict-threshold form of \cite[Fact~4.2(3)]{CIR}.  A subset of a strictly increasing cofinal $\omega$-sequence is bounded below the supremum of the sequence if and only if it is finite.  Replacing $g$ by $g+1$ converts the weak inequality $\leq$ into the strict inequality $<$.  For arbitrary carriers, the equivalence follows directly from
Definition~\ref{def:Mlessomega}.  If $F$ witnesses $M_{<\omega}$ for $g$,
then $m(\xi)=1+\max(F(\xi)\cup\{0\})$ witnesses $U(g)$, because
$g(\alpha)\in F(\xi)$ for all but finitely many $\xi\in L_\alpha$, and at
each such point $m(\xi)>g(\alpha)$.  Conversely, if
$m$ witnesses $U(g+1)$, then
$F(\xi)=\{k<\omega:k<m(\xi)\}$ witnesses $M_{<\omega}$ for $g$, since outside
the finite exceptional set $m(\xi)\geq g(\alpha)+1$.  Thus the universal
threshold scheme is equivalent to $M_{<\omega}$.  Lemma~\ref{lem:Mlessomega-equiv}
and Proposition~\ref{prop:active-normal-form} then give
\[
       (\forall g:S\to\omega)\,U(g)
       \quad\Longleftrightarrow\quad X_L\in\Delta.
\]
The gluing argument for $\cC_L$ uses only the single instance
$U(n(\cdot))$.

\begin{proposition}\label{prop:cC-sigma-delta}
If $X_L\in\Delta$, then $\cC_L$ is a $\sigma$-ideal.
\end{proposition}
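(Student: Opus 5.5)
Since $\cC_L$ is already an ideal by Proposition~\ref{prop:cC-ideal}, it suffices to show closure under countable increasing unions. Let $A_0\subseteq A_1\subseteq\cdots$ be members of $\cC_L$ and set $A=\bigcup_n A_n$. For each $n$ fix a witness $c_n:D\to\omega$ for $A_n$, extended arbitrarily to all of $D$ as Definition~\ref{def:cC} allows. For $\alpha\in A$ let $n(\alpha)=\min\{n:\alpha\in A_n\}$. Because the $A_n$ increase, $c_\ell\restriction L_\alpha$ is finite-to-one for every $\ell\ge n(\alpha)$. Extend $n(\cdot)$ to a function $g:S\to\omega$ by putting $g(\alpha)=0$ for $\alpha\in S\setminus A$.

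Since $X_L\in\Delta$, the equivalence established just before the statement (the universal threshold scheme, via Lemma~\ref{lem:Mlessomega-equiv} and Proposition~\ref{prop:active-normal-form}) yields $U(g)$. Thus we obtain $m:D\to\omega$ such that, for each $\alpha\in S$, only finitely many $\xi\in L_\alpha$ satisfy $m(\xi)<g(\alpha)$. We then glue the colorings by declaring
\[
  c(\xi)=\min\{c_\ell(\xi):\ell\le m(\xi)\}.
\]
Taking the minimum is the natural way to make $c$ small only when one of the admissible colorings is small.

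To check that $c$ witnesses $A\in\cC_L$, fix $\alpha\in A$ and $K<\omega$. We must see that $c(\xi)\le K$ holds for only finitely many $\xi\in L_\alpha$. Let $\xi\in L_\alpha$ with $c(\xi)\le K$, and discard the finitely many $\xi$ with $m(\xi)<n(\alpha)$. For the remaining points, some $\ell\le m(\xi)$ has $c_\ell(\xi)\le K$. The main obstacle appears here: $\ell$ may lie below $n(\alpha)$, and $c_\ell$ for $\ell<n(\alpha)$ need not be finite-to-one on $L_\alpha$. The minimum therefore does not directly work.

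The fix I would try first is to make the coloring respect levels, replacing the minimum by a coloring built from the top level. One choice is $c(\xi)=c_{m(\xi)}(\xi)$, but then $m(\xi)$ varies along $L_\alpha$, so infinitely many distinct $\ell\ge n(\alpha)$ could each contribute a point of color $\le K$. To control this, I would encode the level in the color using a pairing as in Proposition~\ref{prop:cC-ideal}: set
\[
  c(\xi)=\max\Bigl(\max_{\ell\le m(\xi)} \min\bigl(c_\ell(\xi),\dots\bigr)\Bigr)
\]
or, more cleanly, $c(\xi)=\min_{n(\cdot)\text{-admissible}}$. Concretely, I would first pass to cumulative witnesses $d_k(\xi)=\min_{j\le k}\max_{i\in[j,k]}c_i(\xi)$ and look for a form in which small $c$-values at $\xi\in L_\alpha$ force a small $c_{n(\alpha)}$-value. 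The simplest candidate is $c(\xi)=\max\{c_\ell(\xi):\ell\le m(\xi)\}$, which is monotone in the sense that
\[
  c(\xi)\ge c_{n(\alpha)}(\xi)\quad\text{whenever }m(\xi)\ge n(\alpha).
\]
With this choice, $c(\xi)\le K$ forces either $m(\xi)<n(\alpha)$, which happens at only finitely many $\xi\in L_\alpha$ by $U(g)$, or $c_{n(\alpha)}(\xi)\le K$, which also happens at only finitely many $\xi\in L_\alpha$. Hence $c\restriction L_\alpha$ is finite-to-one, $c$ witnesses $A\in\cC_L$, and $\cC_L$ is a $\sigma$-ideal. So the max, not the min, is the right gluing, and the only real input is the single instance $U(n(\cdot))$.
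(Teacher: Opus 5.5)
Your final gluing is correct, but it differs from the paper's. Both arguments use the same single instance $U(n(\cdot))$.

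The paper sets $c(\xi)=\langle m(\xi),c_{m(\xi)}(\xi)\rangle$ for a pairing with $\langle r,s\rangle\ge\max\{r,s\}$. The bound $c(\xi)\le K$ then forces $m(\xi)\le K$, so only the finitely many levels $\ell$ with $n(\alpha)\le\ell\le K$ can contribute points of $L_\alpha$. Each such level contributes finitely many points because $c_\ell\restriction L_\alpha$ is finite-to-one for every $\ell\ge n(\alpha)$. That last step is where the paper needs the $A_n$ to be increasing, hence its preliminary appeal to finite-union closure.

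Your choice $c(\xi)=\max\{c_\ell(\xi):\ell\le m(\xi)\}$ instead dominates the single coloring $c_{n(\alpha)}$ at every point of $L_\alpha$ past the threshold. This gives $\{\xi\in L_\alpha:c(\xi)\le K\}\subseteq\{\xi\in L_\alpha:m(\xi)<n(\alpha)\}\cup\{\xi\in L_\alpha:c_{n(\alpha)}(\xi)\le K\}$, a union of two finite sets. It is slightly more economical: it uses only $\alpha\in A_{n(\alpha)}$, so the reduction to an increasing sequence is unnecessary. Any choice of $n(\alpha)$ with $\alpha\in A_{n(\alpha)}$ works, and the same max trick also gives finite-union closure without a pairing function. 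The paper's encoding of the level is the direct repair of the failure you correctly identified for the bare choice $c_{m(\xi)}(\xi)$.

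In a final write-up, remove the exploratory middle portion, namely the half-formed formulas involving $\min$ and the cumulative witnesses $d_k$. They are not well defined as written and play no role in the argument.
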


\begin{proof}
Let $A=\bigcup_n A_n$ with $A_n\in\cC_L$.  Finite-union closure allows each $A_n$ to be replaced by
$\bigcup_{j\le n}A_j$.  The resulting sequence is increasing and has union $A$.  Put
$n(\alpha)=\min\{n:\alpha\in A_n\}$ for $\alpha\in A$.  For each $n$, choose a witness $c_n:D_{A_n}\to\omega$ and extend it
arbitrarily to $D_A$.  Then $c_n\restriction L_\alpha$ is finite-to-one for
every $\alpha\in A_n$.  Extend the map $n(\cdot):A\to\omega$ arbitrarily to $S$.  The threshold
criterion for $X_L\in\Delta$ then provides a map $m:D\to\omega$ such that
\[
        \{\xi\in L_\alpha:m(\xi)<n(\alpha)\}
\]
is finite for every $\alpha\in A$.

Choose a pairing function $\langle\cdot,\cdot\rangle:\omega^2\to\omega$ with $\langle r,s\rangle\ge\max\{r,s\}$.  For $\xi\in D_A$, set
\[
        c(\xi)=\langle m(\xi),c_{m(\xi)}(\xi)\rangle.
\]
Fix $\alpha\in A$, put $n=n(\alpha)$, and fix $K<\omega$.  Then
\[
\{\xi\in L_\alpha:c(\xi)\le K\}
\subseteq
\{\xi\in L_\alpha:m(\xi)<n\}
\cup
\bigcup_{\ell=n}^{K}
\{\xi\in L_\alpha:m(\xi)=\ell,
                   c_\ell(\xi)\le K\}.
\]
The first set is finite by the threshold condition.  For $n\le\ell\le K$, $\alpha\in A_\ell$.  Since
$c_\ell\restriction L_\alpha$ is finite-to-one, the set
$\{\xi\in L_\alpha:m(\xi)=\ell,\ c_\ell(\xi)\le K\}$ is finite.  Thus
$c\restriction L_\alpha$ is finite-to-one for every $\alpha\in A$, and
$A\in\cC_L$.
\end{proof}

If $L$ is $\sigma$-closed discrete, then $U(g)$ holds for every $g$.  A
witnessing coloring $c$ satisfies $c(\lambda_\alpha(j))\to\infty$, so taking
$m=c$ makes
$\{\xi\in L_\alpha:c(\xi)<g(\alpha)\}$ finite for every $\alpha$.  The universal threshold equivalence used in
Proposition~\ref{prop:cC-sigma-delta} then gives another proof that
$\sigma$-closed discreteness implies the $\Delta$-property.

\subsubsection{A consistency boundary}

\begin{corollary}\label{cor:cC-consistency-boundary}
In the Corral--Szeptycki model, $\cC_L$ is a $\sigma$-ideal for every ladder
system on every carrier $S\subseteq E^{\omegaone}_\omega$.
\end{corollary}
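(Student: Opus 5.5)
The plan is to reduce the corollary to Proposition~\ref{prop:cC-sigma-delta}, so it suffices to show that in the Corral--Szeptycki model $X_L\in\Delta$ for every ladder system $L$ on every carrier $S\subseteq E^{\omegaone}_\omega$. Once $X_L\in\Delta$ is established, Proposition~\ref{prop:cC-sigma-delta} gives that $\cC_L$ is a $\sigma$-ideal, and nothing more is needed. I will split according to whether $S$ is stationary.

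If $S$ is nonstationary, Corollary~\ref{cor:nonstationary-tail-equivalences} shows that $X_L$ is $\sigma$-closed discrete and a $\Delta$-space. This holds in ZFC, so no model-specific input is required. In fact, here $S\in\mathrm{NS}\restriction S\subseteq\cC_L$ by Proposition~\ref{prop:nonstationary-cC}. Hence $\cC_L=\mathcal P(S)$, which is trivially closed under countable unions.

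If $S$ is stationary, I invoke Proposition~\ref{prop:cs-positive-consistency}. In the Corral--Szeptycki model there is $f:\omegaone\to\omega$ that is eventually one-to-one on each $L_\alpha$, and hence finite-to-one on each $L_\alpha$. Restricting $f$ to $D$ and applying Lemma~\ref{lem:sigma-cd-coloring} shows that $X_L$ is $\sigma$-closed discrete. Then $S\in\cC_L$, so again $\cC_L=\mathcal P(S)$ by Proposition~\ref{prop:cC-ideal}, and it is a $\sigma$-ideal directly. Alternatively, one can pass through the $\Delta$-property: the remark after Proposition~\ref{prop:cC-sigma-delta} shows that $\sigma$-closed discreteness yields every $U(g)$ and hence $X_L\in\Delta$, or one can use the cited $M_{<\omega}$ result of \cite[Theorem~2.3]{CS}.

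The only delicate point is the scope of the cited consistency result. I need Proposition~\ref{prop:cs-positive-consistency} to apply to every ladder system on a stationary carrier in the extension, including ladder systems added by the forcing and not only ground-model ones. The statement as quoted asserts this for every ladder system in the extension, so I rely on it as stated. I would also note that the convention $L_\alpha\subseteq D$ is compatible with extending to a total system, as described in the paragraph preceding the proposition.
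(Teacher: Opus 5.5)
Your argument is correct and, once you drop the opening detour through Proposition~\ref{prop:cC-sigma-delta}, it is exactly the paper's proof. In the stationary case, Proposition~\ref{prop:cs-positive-consistency} with Lemma~\ref{lem:sigma-cd-coloring} gives $S\in\cC_L$; in the nonstationary case, Proposition~\ref{prop:nonstationary-cC} gives $S\in\cC_L$; either way $\cC_L=\mathcal P(S)$, which is trivially a $\sigma$-ideal. The reduction to $X_L\in\Delta$ announced at the start is valid but superfluous here, since this direct argument never needs the threshold gluing step.
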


\begin{proof}
If $S$ is stationary, Proposition~\ref{prop:cs-positive-consistency} shows that
$L$ is $\sigma$-closed discrete, and Proposition~\ref{prop:cC-ideal} gives
$\cC_L=\mathcal P(S)$.  If $S$ is nonstationary, every subset of $S$ is
nonstationary, so Proposition~\ref{prop:nonstationary-cC} again gives
$\cC_L=\mathcal P(S)$.  In either case $\cC_L$ is a $\sigma$-ideal.
\end{proof}

\subsubsection{Intersection graphs and the remaining question}

Let $\GL$ be the \emph{intersection graph} of $L$.  Its vertex set is $S$.  For distinct $\alpha,\beta\in S$, set
$\alpha\sim\beta$ if and only if $L_\alpha\cap L_\beta\neq\emptyset$.

\begin{proposition}\label{prop:stationary-graph-clique}
If $S$ is stationary, then $\GL$ contains a clique of cardinality $\aleph_1$.
Consequently, $\chi(\GL)=\aleph_1$.
\end{proposition}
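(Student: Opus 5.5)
The plan is to obtain the clique from a single isolated point whose support is uncountable, via Fodor's lemma, exactly as in part (ii) of Proposition~\ref{prop:tail-multiplicity-ideals}. Take $h\equiv0$ there. The map $\alpha\mapsto\lambda_\alpha(0)$ is regressive on the stationary set $S$, since $\lambda_\alpha(0)<\alpha$. Fodor's lemma therefore yields $\xi_0\in D$ and a stationary $A'\subseteq S$ with $\lambda_\alpha(0)=\xi_0$ for every $\alpha\in A'$. Then for distinct $\alpha,\beta\in A'$ we have $\xi_0\in L_\alpha\cap L_\beta$, so $\alpha\sim\beta$ in $\GL$. Since stationary sets are unbounded and hence uncountable, $A'$ contains a clique of cardinality $\aleph_1$; indeed $A'$ itself is one, as $|S|\le\aleph_1$.

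For the chromatic number, I argue the two bounds separately. For the lower bound, a clique of size $\aleph_1$ forces every proper coloring to be injective on it, so $\chi(\GL)\ge\aleph_1$. For the upper bound, the vertex set $S\subseteq\omegaone$ has cardinality at most $\aleph_1$, so the identity coloring $S\to S$ is proper and $\chi(\GL)\le|S|\le\aleph_1$. Together these give $\chi(\GL)=\aleph_1$.

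No step looks like a real obstacle. The only point to check is that regressivity holds strictly, so that Fodor's lemma applies. Every $\alpha\in S$ is a limit ordinal and $L_\alpha\subseteq\alpha$, so $\lambda_\alpha(0)<\alpha$. As an alternative route, Lemma~\ref{lem:fodor-collapse} with $A=S$ would also give an uncountable fiber $W_S(\xi)$, which is a clique, once one notes that $\Tr_L(S)$ is stationary when $S$ is. The direct Fodor argument is shorter, so I would use that.
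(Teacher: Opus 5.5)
Your proof is correct and is essentially the paper's own argument. Fodor's lemma applied to the regressive map $\alpha\mapsto\lambda_\alpha(0)$ gives a stationary set of ladders sharing a first point, hence a clique of size $\aleph_1$, and $\chi(\GL)\le|S|=\aleph_1$ supplies the upper bound. (Your suggested alternative via Lemma~\ref{lem:fodor-collapse} would itself need this same Fodor argument to show that $\Tr_L(S)$ is stationary, so the direct route is the right choice.)
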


\begin{proof}
The regressive map $\alpha\mapsto\lambda_\alpha(0)$ is constant on a
stationary subset of $S$ by Fodor's lemma.  The ladders indexed by that set
share their first point and therefore form a clique of size $\aleph_1$.  Since the vertex set has cardinality $\aleph_1$,
$\chi(\GL)\le |S|=\aleph_1$.  Hence $\chi(\GL)=\aleph_1$.
\end{proof}

\begin{proposition}\label{prop:graph-cC-bridge}
Let $A\subseteq S$.
\begin{enumerate}[label=\textup{(\roman*)},leftmargin=2.4em]
\item If $A$ is an independent set of $\GL$, then $A\in\cC_L$.
\item If $\GL[A]$ has a countable proper coloring, then $A$ is a countable
      union of members of $\cC_L$.
\item If $\cC_L$ is not a $\sigma$-ideal, then there are pairwise disjoint
      sets $A_n\in\cC_L$ whose union is stationary and such that some $A_n$
      contains adjacent vertices of $\GL$.
\end{enumerate}
\end{proposition}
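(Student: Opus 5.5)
For (i), I will show that if $A$ is independent in $\GL$, then the ladders $L_\alpha$ with $\alpha\in A$ are pairwise disjoint. The witness is the position coloring. Define $c:D_A\to\omega$ by $c(\xi)=p_\alpha(\xi)$, where $\alpha\in A$ is the unique index with $\xi\in L_\alpha$; uniqueness is exactly independence. Then $c\restriction L_\alpha$ is the enumeration index, so it is injective and in particular finite-to-one. The restricted form of Lemma~\ref{lem:sigma-cd-coloring}, as recorded in Definition~\ref{def:cC}, gives $A\in\cC_L$.

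For (ii), fix a proper coloring $d:A\to\omega$ of $\GL[A]$. Each color class $A_k=d^{-1}\{k\}$ is independent in $\GL$, because adjacency in $\GL[A]$ is adjacency in $\GL$. By (i), $A_k\in\cC_L$, and $A=\bigcup_k A_k$. No further step is needed here.

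For (iii), assume $\cC_L$ is not a $\sigma$-ideal. By Proposition~\ref{prop:cC-ideal}, $\cC_L$ is downward closed and closed under finite unions, so the failure must come from countable unions: there are $B_n\in\cC_L$ with $B=\bigcup_n B_n\notin\cC_L$. Replace these by the disjoint sets $A_n=B_n\setminus\bigcup_{j<n}B_j$. Each $A_n$ lies in $\cC_L$ by downward closure, and the union is unchanged. I then check the two required properties.

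\emph{Stationarity.} If $B$ were nonstationary, Proposition~\ref{prop:nonstationary-cC} would give $B\in\cC_L$, a contradiction. Hence the union is stationary.

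\emph{Adjacent vertices.} Suppose instead that every $A_n$ is independent in $\GL$. Then $n\mapsto$ (the index of the piece containing a vertex) is a proper coloring of $\GL[B]$ only if edges between different pieces are allowed, which they are, so this alone gives no contradiction. This is the step I expect to be the main obstacle. The intended argument instead exploits independence of the whole union. If no $A_n$ contains adjacent vertices, I will try to show $B\in\cC_L$ directly, using the position coloring glued with the index: set $c(\xi)=\langle n,p_\alpha(\xi)\rangle$ for $\xi\in L_\alpha$, $\alpha\in A_n$.

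This $c$ is ill-defined when $\xi$ lies on ladders from different pieces. The fix is to choose, for $\xi$, the least $n$ whose piece contains a ladder through $\xi$; within $A_n$ that ladder is unique by independence. The remaining difficulty is that along a fixed $L_\alpha$ with $\alpha\in A_n$, points may receive colors from earlier pieces $A_j$ with $j<n$ at unbounded positions. This is the crux, and it is not clear the gluing works without further hypotheses.

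\textbf{Fallback.} If the direct gluing fails, I will prove a weaker form that the hypothesis certainly supports. Statement (ii) shows that any countable union of independent pieces is a union of $\cC_L$-members, so the real content of (iii) is the negation ``not all $A_n$ independent.'' I will derive this as follows: if all $A_n$ could be chosen independent, then the decomposition would be a countable proper-coloring-type decomposition, and I would try to argue via a single threshold instance, as in the proof of Proposition~\ref{prop:cC-sigma-delta}, that $B\in\cC_L$.
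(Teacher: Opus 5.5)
Your arguments for (i), (ii), and the stationarity of $B$ in (iii) are correct and match the paper's. The gap is the adjacency claim in (iii). It comes from a misreading at exactly the step you set aside. Suppose every $A_n$ is independent in $\GL$. Then giving each vertex of $A_n$ the color $n$ \emph{is} a proper coloring of $\GL[B]$. There are no edges inside any $A_n$, and an edge between $A_n$ and $A_m$ with $n\neq m$ joins vertices of different colors, which is precisely what properness allows. The contradiction then comes not from $\cC_L$ but from Proposition~\ref{prop:stationary-graph-clique} applied to the restricted system $L\restriction B$, whose intersection graph is $\GL[B]$. Since $B$ is stationary, Fodor's lemma applied to the regressive map $\alpha\mapsto\lambda_\alpha(0)$ on $B$ gives a stationary set of $\alpha\in B$ sharing a first ladder point. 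This set is a clique of size $\aleph_1$, so $\chi(\GL[B])=\aleph_1$ and no countable proper coloring exists. Equivalently, by pigeonhole two members of that clique lie in a single $A_n$, and they are adjacent. This is the missing idea.

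Neither of your substitutes supplies it. The least-index gluing $c(\xi)=\langle n,p_\alpha(\xi)\rangle$ fails as a construction, as you suspected. A ladder $L_\alpha$ with $\alpha\in A_1$ can pass through the first points of infinitely many pairwise disjoint ladders indexed by $A_0$. All of those points receive the color $\langle 0,0\rangle$, so $c\restriction L_\alpha$ is not finite-to-one. The fallback through a single threshold instance $U(n(\cdot))$ requires the hypothesis $X_L\in\Delta$ of Proposition~\ref{prop:cC-sigma-delta}, which is not assumed here. Note also that a countable union of independent sets is nonstationary by the same Fodor argument, hence lies in $\cC_L$ by Proposition~\ref{prop:nonstationary-cC}. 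So any route to ``$B\in\cC_L$'' from independence of the pieces has to go through the clique argument anyway.
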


\begin{proof}
For (i), independence ensures that each point of $\bigcup_{\alpha\in A}L_\alpha$ belongs to a unique ladder indexed by $A$.  Define $c(\xi)=p_\alpha(\xi)$ for that unique $\alpha$, and put $c(\xi)=0$ off
$\bigcup_{\alpha\in A}L_\alpha$.  Then $c\restriction L_\alpha$ is
one-to-one for every $\alpha\in A$, so $A\in\cC_L$.

For (ii), every color class is independent and hence belongs to $\cC_L$ by
(i).  Thus $A$ is a countable union of members of $\cC_L$.

For (iii), choose $B_n\in\cC_L$ with
$B=\bigcup_n B_n\notin\cC_L$, and set
$A_n=B_n\setminus\bigcup_{j<n}B_j$.  By downward closure, the sets $A_n$ belong to $\cC_L$, are pairwise disjoint, and have union $B$.
Proposition~\ref{prop:nonstationary-cC} implies that $B$ is stationary.  If
every $A_n$ were independent, then assigning color $n$ to $A_n$ would be a
countable proper coloring of $\GL[B]$, contradicting
Proposition~\ref{prop:stationary-graph-clique} applied to the restricted
system on $B$.
\end{proof}

Known forcing and guessing constructions of non-$\Delta$ ladder-system spaces
show that the universal threshold scheme can fail.  See
\cite[Theorem~5.3]{LS} and \cite[Remark~4.3]{CIR}.  Such failures do not by
themselves produce a countable cover of $S$ by members of $\cC_L$, so they are
not counterexamples to the $\sigma$-additivity of $\cC_L$.  By
Proposition~\ref{prop:cC-sigma-delta}, any negative answer must be witnessed
by a non-$\Delta$ ladder-system space.

\begin{problem}\label{prob:cC-zfc}
Is the assertion ``for every carrier $S\subseteq E^{\omegaone}_\omega$ and every
ladder system $L$ on $S$, $\cC_L$ is a $\sigma$-ideal'' a theorem of
$\mathrm{ZFC}$?  Equivalently, for every
ladder system $L$ on $S$, if $S$ is a countable union of subcarriers $A_n$ such
that each restricted space $X_{A_n}$ is $\sigma$-closed discrete, is $X_L$ necessarily
$\sigma$-closed discrete?
\end{problem}

Determining which forcing assumptions make every ladder system on a
stationary carrier satisfy $M_{<\omega}$ is a separate problem.  The remainder
results here are structural separation statements, not a
compactification-theoretic characterization.

\section*{Data availability}
No research data were generated or analyzed in this study.

\section*{Acknowledgements}
The author thanks colleagues and friends for helpful discussions.

\end{document}